\newtheorem{theorem}{Theorem}
\newtheorem{propo}[theorem]{Proposition}
\newtheorem{lem}[theorem]{Lemma}
\theoremstyle{definition}
\newtheorem{Def}{Definition}
\newtheorem{conj}{Conjecture}
\newtheorem{example}{Example}
\newtheorem{rem}{Remark}
\newtheorem{hyp}{Hypothesis}
\date{}
\begin{document}

\title{When can a population spreading across sink habitats persist ?\thanks{Funded by the grants 200020 196999 and 200020\_219913 from the Swiss National Foundation.}}

\author{{Michel Benaim
\thanks{\href{mailto:michel.benaim@unine.ch}{michel.benaim@unine.ch}. Institut de Math\'ematiques, Universit\'e de Neuch\^atel, Switzerland.}}
\and{Claude Lobry
\thanks{\href{mailto:claude\_lobry@orange.fr}{claude\_lobry@orange.fr}. C.R.H.I, Universit\'e Nice Sophia Antipolis, France.}}
\and{Tewfik Sari
\thanks{Corresponding author. Email: \href{mailto:tewfik.sari@inrae.fr}{tewfik.sari@inrae.fr}. ITAP, Univ Montpellier, INRAE, Institut Agro, Montpellier, France.}}
\and{Edouard Strickler
\thanks{\href{mailto:edouard.strickler@univ-lorraine.fr}{edouard.strickler@univ-lorraine.fr}. Universit\'e de Lorraine, CNRS, Inria, IECL, Nancy, France.}}}
\date{\today}
\maketitle

\begin{abstract}
We consider populations with time-varying growth rates living in sinks. Each population, when isolated, would become extinct. Dispersal-induced growth (DIG) occurs when the populations are able to persist and grow exponentially when dispersal among the populations is present. We provide a mathematical analysis of this surprising phenomenon, in the context of a deterministic model with periodic variation of growth rates and non-symmetric migration which are assumed to be piecewise continuous. We also consider a stochastic model with random variation of growth rates and migration.  This work extends existing results of the literature on the DIG effects obtained for periodic continuous growth rates and time independent symmetric migration. \\
{\bf Keywords}. {Dispersal-induced growth. 
Periodic linear cooperative systems. Principal Lyapunov exponent. 
Averaging. Singular perturbations. 
Perron root. Metzler matrices. Sinks. Stochastic environment. Markov Feller process.}\\
{\bf MSC Classification}{ 92D25, 34A30, 34C11, 34C29, 34E15, 34D08, 34F05, 37H15, 37N25, 60G53, 60J20, 60J25}
\end{abstract}

\tableofcontents
\section{Introduction}

Many plant and animal populations live in separate patches which have different environmental conditions and  that are connected by dispersal. The study of the interaction between organism dispersal and environmental heterogeneity, both spatial and temporal, to determine population growth is a central theme of ecological theory \cite{Baguette,Hanski}. 

A patch is called a {\em source} when, in the absence of dispersion, the environmental conditions lead to the persistence of the population, and a {\em sink } when, on the contrary, they lead to the extinction of the population. A basic insight of source-sink theory is that populations in sinks may be sustained, as a result of immigration from source patches \cite{Pulliam}.  A more surprising phenomenon is that {\em Populations can persist in an environment consisting of sink habitats only}  as announced in the title of \cite{Jansen}. In fact, this somewhat paradoxical effect of dispersal has already been pointed by Holt \cite{Holt} on particular systems, later called by him {\em inflation} \cite{HOLT02,Holt-et-al}, 
and has been addressed  in various papers considering discrete time models (\cite{Holt, Holt-et-al, Jansen} ),
 continuous time models 
(\cite{HOLT02,KLA08})
 and stochastic models 
 (\cite{ROY05, SCH10,EVA13}) on two patches (see \cite{benaim} and the references therein for more information). A recent paper (\cite{HOLTPNAS20}) shows the importance of this phenomenon in   the epidemiological context.
 
  A common feature to all these paradoxical effects is that  the local growth rates are, on each patch, periodic of period $T$ (which mimics seasonality or any other periodic environment), of the form $r(t/T)$, where $r(\cdot)$ is 1-periodic, and patches are connected by migration.

Motivated by these pioneering works  on the paradoxical effect of inflation a thorough mathematical study has been undertaken to clarify the origin and characteristics of this phenomenon in the case of continuous time systems  by Katriel  \cite{Katriel} and Benaim et al. \cite{benaim}. The present paper continues these investigations.

The Katriel's paper \cite{Katriel} considers the case of $n\geq 2$ patches,  {constant and symmetric}  migration rates $m\ell_{ij}$, and such that the  matrix $L =(\ell_{ij})$ describing the connections between the patches is irreducible, which means that every  patch is connected to any other one through a sequence of patches.
In this case it is true that, due to the mixing effet of migration, the asymptotic growth rate is the same on every patch. 
Since it depends on the strength $m$ of the migration   and on the period $T$, this common value is denoted by $\Lambda(m,T)$.

Katriel introduces an index $\chi$ which is the mean on the period of  $\max_i r_i(t)$  where $r_i(t)$  is the  growth  rate on patch $i$
 and he shows  that, if and only if $\chi >0$,  there is a critical function $ T_c(m) $  which separates the  space of parameters $T$ and $m$
in two regions such that when  $T > T_c(m) $ the system is increasing, otherwise decreasing. Moreover \cite{Katriel} gives some information on the shape of $ T_c(m) $ and the asymptotic  of $\Lambda(m,T)$ for  $T  \to 0$ (fast regime) and $T \to \infty$ (slow regime).
In particular it turns out that, provided that $T$ is not too small, $\Lambda(m,T)$ is positive when $m$ takes intermediate values, not too small, not too large.

Our paper \cite{benaim} is less general than Katriels's in the sense that we consider the case of only
two patches connected by migration but more general in the sense that  we assume that the $r_i(t)$'s
are piecewise continuous or random.
We show that the conclusion of \cite{Katriel} remains true in the deterministic case and can be  generalized to the stochastic framework. 
In the present paper we address the case of 
a deterministic or stochastic model with $n$ patches ($n\geq 2$) such that:
\begin{itemize}
\item The matrix $L(t)  = (\ell_{ij}(t)) $ describing the connection between patches is still irreducible but is no longer constant nor symmetric, only $T$-periodic.
\item The functions $r_i(t)$ and $\ell_{ij}(t)$ are piecewise continuous or stochastic.
\end{itemize}

This framework makes it possible to generalize  the models of  \cite{Katriel} and \cite{benaim} to models with more realistic assumptions, especially regarding the symmetry of migration rates which is rather unlikely in a real ecological system. We prove that all the results obtained in \cite{Katriel,benaim} remain true in this more general setting except for the existence of the critical function $ T_c(m) $ which is specific to symmetric and constant migration rates.

The style of our mathematical treatment is the one of dynamical systems theory and aside the Perron-Frobenius theorem we make use of two classical tools in this area:  the method of averaging and Tikhonov's theorem on singular perturbations (which is explained in the appendix)

The paper is organized as follows. In Section \ref{PE} we consider a periodic environment where the growth rates and the migration matrix are periodic functions. The formulation of our main results are given in Section \ref{Results} (some proofs are postponed to Section \ref{IP}).  
In Section \ref{SE} we consider a stochastic environment where the growth rates and the migration matrix depend on a Markov process.
In Section \ref{Numerical}, by means of numerous numerical illustrations of the cases with 2 and 3 patches (where {symbolic computation} 
software makes it possible to exploit explicit formulas) we tried to {visually illustrate} 
our principal results. 
In Section \ref{Discussion} we discuss the results in more detail and propose some questions for further research.
In Appendix \ref{PFT} we provide some consequences of the Perron-Frobenius theorem which are needed thorough the paper.
In Appendix \ref{CLDE} we provide some results which are needed in Section \ref{IP}. In Appendix \ref{SecTikhonov} we provide a statement of the theorem of Tikhonov on singular perturbations which is used to prove the asymptotic behavior of the growth rate when the period is large ($T\to \infty$, see Section \ref{HFL}) or the migration rate is large ($m\to \infty$, see Section \ref{FM}).
\color{black}

\section{Periodic environment}\label{PE}
\subsection{The model}
Katriel \cite{Katriel} considered the model of 
populations of sizes $x_i(t)$ ($1\leq i\leq n$), inhabiting $n$ patches, and subject to time-periodic local growth rates $r_i(t/T)$ ($1\leq i\leq n$), where it is assumed that $r_i(\tau)$ are $1$-periodic continuous
functions, so that $r_i(t/T)$ are periodic with 
period $T>0$. The dispersal among the patches $i$ and $j$ ($i\neq j$) is at rate $m\ell_{ij}$ where the parameter $m\geq 0$ measures the strength, and 
the numbers 
\begin{equation}\label{sym}
\ell_{ij}=\ell_{ji},\qquad (i\neq j)
\end{equation} encode the topology of the dispersal network and the relative rates of dispersal among different patches: 
If $\ell_{ij} = 0$, there is no migration between the patches $i$ and $j$, if $\ell_{ij} > 0$, there is a migration. 
We then have the differential equations
\begin{equation}\label{eqKatriel}
\frac{dx_i}{dt}=r_i(t/T)x_i+m\sum_{j\neq i}\ell_{ij}\left(x_j-x_i\right),
\quad 1\leq i\leq n. 
\end{equation}  
Katriel \cite{Katriel} proved that in the irreducible case , any solution of \eqref{eqKatriel} with $x_i(0)>0$ for $1\leq i\leq n$ satisfies $x_i(t)>0$ for all $t>0$ so that 
{we can define the {local growth rates} $\Lambda[x_i]=\lim_{t\to\infty}\frac{1}{t}\ln(x_i(t))$, provided this limit exists. It is shown in \cite{Katriel} that, when $m>0$, the numbers $\Lambda[x_i]$, $1\leq i\leq n$ are equal, and moreover 
they do not depend on the initial condition. Their common value is called the growth rate of the system (\ref{eqKatriel}) and denoted $\Lambda(m,T)$. In fact $\Lambda(m, T)$
is the maximal Lyapunov exponent of the system. This remark pertains to
the use of the term `Lyapunov exponent' throughout the paper.} The main results in \cite{Katriel} are on the asymptotic properties of $\Lambda(m,T)$ when $T\to 0$ and $T\to\infty$. 
{An important result, which {plays} a significant role in the proofs of the main results of \cite{Katriel} is that for all $m>0$, the function $T\mapsto\Lambda(m,T)$ is increasing. Actually, it is strictly increasing except in the case that all $r_i(\tau)$ are equal, where it is a constant function, see \cite[Lemma 2]{Katriel}. This result follows from general results of Liu et al. \cite{liu}.}

{We now extend the model \eqref{eqKatriel} to
the case of asymmetric and time-dependent migration.}
We denote by $\ell_{ij}(\tau)\geq 0$ the migration term, from patch $j$ to patch $i$. At time $\tau$, there is a migration from patch $j$ to patch $i$ if and only if $\ell_{ij}(\tau) > 0$. The differential equations \eqref{eqKatriel} become
\begin{equation}\label{eq1}
	\frac{dx_i}{dt}=r_i(t/T)x_i+m\sum_{j\neq i}\left(\ell_{ij}(t/T)x_j-\ell_{ji}(t/T)x_i\right),\quad 1\leq i\leq n. 
\end{equation}
We make the following assumption 

\begin{hyp} \label{H1}
The functions
$\tau\mapsto r_i(\tau)$ and  
$\tau\mapsto \ell_{ij}(\tau)$
are piece-wise continuous 1-periodic functions, with a finite set of discontinuity points on each period. Moreover, they have left and right limits at the discontinuity points.
\end{hyp}

{Hypothesis \ref{H1} implies
that} the solutions of \eqref{eq1} are continuous and piecewise $\mathcal{C}^1$ functions satisfying  \eqref{eq1} except at the points of discontinuity of the functions $r_i$ and $\ell_{ij}$.
The matrix
$L(\tau)$ whose off diagonal elements are $\ell_{ij}(\tau)$, $i\neq j$, and diagonal elements $\ell_{ii}(\tau)$ are given by
\begin{equation}\label{Lii}
\ell_{ii}(\tau)=-\sum_{j\neq i} \ell_{ji}(\tau),\quad 1\leq i\leq n,
\end{equation}
is called the migration or dispersal matrix. 
Using the matrix $L(\tau)$,   \eqref{eq1} can be written as
\begin{equation}\label{eq3}
\frac{dx}{dt}=A(t/T){x},\qquad A(\tau)=R(\tau)+mL(\tau)
\end{equation}
where  ${x}=\left(
	x_1,\cdots,
	x_n\right)^\top$ and 
$R(\tau)={\rm diag}\left(
	r_1(\tau),\cdots,r_n(\tau)
\right).$
In addition to the assumptions that $L(\tau)$  has non-negative off diagonal elements ($\ell_{ij}(\tau)\geq 0$ for $i\neq j$), we also make the following assumption 
\begin{hyp}\label{H2}
For all $\tau$, $L(\tau)$ is {\it{irreducible}}.
\end{hyp}
{
This assumption, means that at each time, every patch is reachable from every other patch,
either directly or by a path through other patches. When the migration is time-independent, the irreducibility hypothesis is very usual in ecological models with mobility of the populations, see \cite{arino,Katriel}. However, when migration is time-dependent, some results remain valid without the assumption of the irreducibility of the matrix migration $L(\tau)$ for all $\tau$, see Section \ref{INNsec}.}

\subsection{The growth rate}

We use the following notations: for $x\in\mathbb{R}^n$,
$x\geq 0$ means that for all $i$, $x_i\geq 0$, $x> 0$ means that $x\geq 0$ and $x\neq 0$, and $x\gg 0$ means that for all $i$, $x_i> 0$. 

If $m>0$ we have the property that if at $t=0$ the population is present in at least one patch, then it will be present in all patches for all $t>0$.  
{Indeed, 
since $\ell_{ij}(\tau)\geq 0$ for $i\neq j$ and $L(\tau)$ is irreducible, for all $m>0$ and $t\geq 0$, the matrix $A(t/T)$ in \eqref{eq3} is an irreducible cooperative matrix. Hence, using a classical  result on irreducible cooperative linear systems (see \cite[Theorem 1.1] {hirsch} or \cite[Lemma]{Slom}), $x(0)>0$ implies $x(t)\gg 0$ for all $t>0$.}

The previous result needs that the migration matrix $L(\tau)$ is irreducible and that dispersal is present. Indeed, in the absence of dispersal ($m=0$) the population in each patch would evolve independently, and the differential equations are solved to yield
\begin{equation}\label{diagcase}
x_i(t)=x_i(0)e^{\int_0^t r_i(s/T)ds},\qquad 1\leq i\leq n.\end{equation}
Given a function $u:[0,\infty)\rightarrow (0,\infty)$ we will
denote its Lyapunov exponent by
\begin{equation*}
\Lambda[u]=\lim_{t\rightarrow \infty}\frac{1}{t}\ln(u(t)),
\end{equation*}
provided this limit exists.
Note that $\Lambda[u]>0$ 
corresponds to exponential growth, while $\Lambda[u]<0$ 
corresponds to exponential decay - leading to extinction. 
Therefore, as shown by \eqref{diagcase}, we have
\begin{equation}
\label{eq7}
\mbox{If }m=0\mbox{ then }\Lambda[x_i]=\lim_{t\rightarrow\infty} \frac{1}{t}\int_0^t r_i(s/T)ds=\overline{r}_i,
\end{equation}
where
\begin{equation}
\label{barri}
\overline{r}_i=\int_0^{1}r_i(\tau)d\tau,\qquad 1\leq i \leq n
\end{equation}
are the {\it{local}} average growth rates in each of the patches.  

If $m>0$,  for any solution of (\ref{eq3}) with 
$x(0)>0$, we have $x_i(t)>0$ for all $t>0$, so that we can define the Lyapunov exponents $\Lambda[x_i]$, for $1\leq i\leq n$. 
However, the formula \eqref{eq7} giving $\Lambda[x_i]$ is no longer true.
The study of $\Lambda[x_i]$ when the patches are coupled by dispersion ($m > 0$) is more difficult than in the uncoupled case, because equations \eqref{eq3} cannot be solved as in the case where $m=0$.

Since the system \eqref{eq3} is a periodic system, its study reduces to the study of its monodromy matrix $\Phi(T)$, where $\Phi(t)$ is the {fundamental} matrix solution, i.e. the solution of the matrix-valued differential equation
\begin{equation}\label{FMA}
\frac{dX}{dt}=A(t/T)X,
\end{equation}
with initial condition $X(0)=Id$, the identity matrix.
Since the matrix $A(\tau)$ has off diagonal non-negative entries (such a matrix is usually called \emph{Metzler} or \emph{cooperative}), 
{and is irreducible for all $\tau$, the monodromy matrix $\Phi(T)$ of \eqref{eq3} has positive entries, see Lemma \ref{lemme1} in Appendix \ref{CLDE}. Therefore, by the Perron theorem, it has a dominant eigenvalue (an eigenvalue of maximal modulus, called the \emph{Perron root}), which is positive, see Theorem \ref{Ptheorem} in Appendix \ref{PFT}.} 
We denote it by  
$\mu(m,T)$, to emphasize its dependence on $m$ and 
$T$. We have the following result.

\begin{propo}\label{Prop3}
Assume that Hypotheses \ref{H1} and \ref{H2} are satisfied.
Suppose $m>0$ and $T>0$. Let $\mu(m,T)$ be the Perron root of the monodromy matrix $\Phi(T)$ of \eqref{eq3}. If $x(t)$ is a solution of \eqref{eq3} such that $x(0)>0$, then for all $i$ 
 \begin{equation}\label{Lambda}
\Lambda[x_i]=\Lambda(m,T):=\frac{1}{T}\ln \left( \mu(m,T)\right). 
\end{equation}
The function $\Lambda$ is analytic in $m$ and $T$.
\end{propo}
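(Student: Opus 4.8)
The plan is to treat the two assertions separately: first the identification of the Lyapunov exponents with $T^{-1}\ln\mu(m,T)$, which is a Floquet-plus-Perron argument, and then the analyticity, which rests on analytic dependence of the monodromy matrix and of its Perron root on the parameters.

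For the first part, I would begin from the Floquet relation $\Phi(t+T)=\Phi(t)\Phi(T)$, which holds because $A(\tau)$ is $1$-periodic: both sides solve \eqref{FMA} and agree at $t=0$, so by uniqueness they coincide. In particular $\Phi(nT)=\Phi(T)^n$. By the cited Lemma \ref{lemme1}, $\Phi(T)$ has strictly positive entries, so by the Perron theorem (Theorem \ref{Ptheorem}) its Perron root $\mu=\mu(m,T)>0$ is a simple eigenvalue strictly dominating all others in modulus, with positive right and left eigenvectors $v\gg 0$ and $w\gg 0$. Writing $\Pi=vw^\top/(w^\top v)$ for the associated spectral projection, one has $\mu^{-n}\Phi(T)^n\to\Pi$. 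Since $x(0)>0$ and $w\gg 0$, the scalar $w^\top x(0)>0$, hence $\mu^{-n}x(nT)=\mu^{-n}\Phi(T)^n x(0)\to (w^\top x(0)/w^\top v)\,v\gg 0$. Taking logarithms gives $\frac{1}{nT}\ln x_i(nT)\to\frac{1}{T}\ln\mu$ for every $i$. To pass from the sequence $t=nT$ to arbitrary $t$, I would write $t=nT+s$ with $s\in[0,T)$ and use $x(nT+s)=\Phi(s)\Phi(T)^n x(0)=\Phi(s)x(nT)$. On the compact interval $[0,T]$ the map $s\mapsto\Phi(s)v$ is the solution of \eqref{FMA} issued from $v\gg 0$; since the flow of a cooperative system preserves the positive cone, $\Phi(s)v\gg 0$ for all $s$, and by continuity its components are bounded above and below by positive constants uniformly in $s\in[0,T]$. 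Consequently $\ln x_i(nT+s)=n\ln\mu+O(1)$ uniformly in $s$, and dividing by $nT+s\to\infty$ yields $\Lambda[x_i]=T^{-1}\ln\mu(m,T)$, independent of $i$ and of $x(0)$.

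For the analyticity, I would first rescale time by $\tau=t/T$, turning \eqref{eq3} into the $1$-periodic system $dx/d\tau=B(\tau;m,T)x$ with $B(\tau;m,T)=T\bigl(R(\tau)+mL(\tau)\bigr)$, whose time-$1$ fundamental solution equals the monodromy matrix $\Phi(T)$. For each fixed $\tau$ the matrix $B(\tau;m,T)$ is polynomial, hence analytic, in $(m,T)$. The entries of $\Phi(T)$ are then given by the Peano--Baker series $\Phi(T)=I+\sum_{k\ge 1}\int_{0<\tau_k<\cdots<\tau_1<1}B(\tau_1)\cdots B(\tau_k)\,d\tau_k\cdots d\tau_1$; each term is analytic in $(m,T)$, and since $\|B\|$ is bounded on compact parameter sets the $k$-th term is dominated by $\|B\|_\infty^k/k!$, so the series converges locally uniformly and its sum is analytic in $(m,T)$. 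The piecewise continuity of $\tau\mapsto B$ is harmless: it only affects the integrations in $\tau$, over finitely many smooth pieces, while the parameter dependence stays analytic. Finally, because $\mu(m,T)$ is a \emph{simple} root of the characteristic polynomial $\det(\lambda I-\Phi(T))$ for every $(m,T)$ with $m>0$, $T>0$---simplicity again being furnished by the Perron theorem---the analytic implicit function theorem, applied to this polynomial whose coefficients are analytic in $(m,T)$, shows that $\mu(m,T)$ is analytic. Since $\mu>0$ and $T>0$, the function $\Lambda=T^{-1}\ln\mu$ is analytic in $(m,T)$.

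The main obstacle, as I see it, lies in the analyticity step rather than in the Lyapunov-exponent computation. Two points require care: that the monodromy matrix remains analytic in the parameters despite the merely piecewise-continuous time dependence, which is handled by noting that $(m,T)$ enter $B$ polynomially and the $\tau$-integration is over finitely many pieces; and that the Perron root inherits this analyticity. The latter is exactly where the simplicity of the dominant eigenvalue, guaranteed by strict positivity of $\Phi(T)$, is indispensable, since without simplicity an eigenvalue may depend only continuously and develop branch-point singularities in the parameters.
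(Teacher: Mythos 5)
Your proof is correct, but for the identification of the Lyapunov exponents it follows a genuinely different route from the paper. You work directly with the linear iteration: the Floquet relation $\Phi(nT+s)=\Phi(s)\Phi(T)^n$, the Perron spectral projection $\mu^{-n}\Phi(T)^n\to\Pi$, the positivity of $w^\top x(0)$, and a uniform-in-$s$ interpolation between the discrete times $nT$. The paper instead rescales time to reduce to a $1$-periodic system and invokes Theorem \ref{Lambda(x)} of Appendix \ref{CLDE}, whose proof passes through the decomposition $x=\rho\theta$, the induced flow on the simplex $\Delta$, and the existence of a globally asymptotically stable periodic orbit $\theta^*$ on $\Delta$ (Proposition \ref{periodicPsi}); the exponent is then obtained as the time average $\int_0^1\langle A(\tau)\theta^*,{\bf 1}\rangle\, d\tau$. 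Both arguments ultimately rest on the same power-iteration convergence for the positive matrix $\Phi(T)$ (the paper's \eqref{thelimit} and \eqref{Conv} are exactly your projection limit), but your version is leaner and self-contained; indeed your interpolation step $t=nT+s$ is carried out more explicitly than in the paper's appendix, where the passage from integer to continuous times is essentially asserted. What the paper's detour buys is the integral formula \eqref{Lambda=Lambda[rho]1} for the growth rate (Theorem \ref{thm2}), which is the workhorse for the later asymptotics (averaging as $T\to 0$, Tikhonov as $T\to\infty$ or $m\to\infty$); your argument does not produce that formula, but it is not needed for Proposition \ref{Prop3} itself. For the analyticity, your argument coincides in substance with the paper's: rescale so that $(m,T)$ enter the vector field analytically, establish analyticity of the monodromy matrix in the parameters, and exploit the simplicity of the Perron root (the paper cites \cite{Brillinger} where you invoke the analytic implicit function theorem); your Peano--Baker expansion supplies the detail the paper leaves implicit and has the merit of giving joint analyticity in $(m,T)$ directly, rather than analyticity in each variable separately.
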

\begin{proof}
{
The proof is given in Section \ref{IP1}.}
\end{proof}

{
\begin{rem}
This result is known in the case of continuous $R$ and constant migration matrix $L$, see \cite[Eq. (20)]{Katriel}. In \cite{Katriel} the growth rate is seen as a principal eigenvalue (the one with largest real part) of the
periodic problem $d\phi/dt=A(t/T)\phi-\lambda\phi$, $\phi(t+T)=\phi(t)$, associated to \eqref{FMA}, see \cite[Section 3.1]{Katriel}. For a general cooperative and irreducible matrix $A(\tau)$, i.e. $L$ is not assumed to be constant or symmetric, it is known that  $\frac{1}{T}\ln ( \mu(T))$, can be seen as the principal eigenvalue of the corresponding 
periodic problem, see \cite[Lemma 3.1]{liu}. 
\end{rem}
}

The fundamental fact is that, when dispersal is present, the Lyapunov exponents $\Lambda[x_i]$ of all components
$x_i(t)$ are equal, and moreover 
they do not depend on the initial condition. Following \cite{Katriel} we adopt the following definition.

\begin{Def}
The growth rate of the system \eqref{eq3} is the common value 
$\Lambda(m,T)$ given by \eqref{Lambda} 
of the Lyapunov exponents $\Lambda[x_i]$ of all components of any solution $x(t)$ such that $x(0)>0$. 
\end{Def}

The main problem is to study the dependence of $\Lambda(m,T)$ in  $m$ and $T$.  
In contrast to autonomous systems, studying the Perron root of the monodromy matrix of periodic systems analytically is 
challenging, and rarely possible. Thus, the formula \eqref{Lambda} is of little practical interest. However, much can be said on the asymptotics of $\Lambda(m,T)$,  for large and small $m$ or $T$, as shown in Section \ref{asymptotics}.

For piecewise constant local growths and migration rates, it is possible to compute the monodromy matrix {in closed form}, and select its largest eigenvalue in modulus, and use the formula \eqref{Lambda} to plot the graph of the function $(m,T)\mapsto\Lambda(m,T)$. For  details and complements, see \cite{benaim} and Section \ref{Numerical}.

\subsection{The DIG threshold}

Following \cite{Katriel} we adopt the following definition.

\begin{Def}\label{DIG} We say that {\it{dispersal-induced growth}} (DIG) occurs 
if all patches are sinks ($\overline{r}_i<0$ for $1\leq i\leq n$), but  $\Lambda(m,T)>0$ for some values of $m$ and $T$.
\end{Def}
This means that each of the populations would become extinct if isolated, but dispersal, at an appropriate rate, induces
exponential growth in all patches. 
The following number was defined by Katriel \cite{Katriel} and plays an important role 
\begin{equation}
\label{chi}
\chi:=\int_0^1\max_{1\leq i\leq n}r_i(\tau)d\tau.
\end{equation}
We have the following result

\begin{theorem} \label{upperborneLambda} 
For all $m>0$ and $T>0$ we have
$\Lambda(m,T) \leq \chi.$ Therefore if $\chi\leq 0$ then  $\Lambda(m,T)\leq 0$ for all $m>0$ and $T>0$, so that DIG does not occur.
\end{theorem}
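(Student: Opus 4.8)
The plan is to track the total population $S(t) := \sum_{i=1}^n x_i(t) = \mathbf{1}^\top x(t)$, where $\mathbf{1} = (1,\dots,1)^\top$, and to exploit the conservation property built into the migration matrix. The decisive observation is that, by the definition \eqref{Lii} of the diagonal entries, every column of $L(\tau)$ sums to zero, i.e. $\mathbf{1}^\top L(\tau) = 0$ for all $\tau$. Ecologically this is just the statement that migration redistributes individuals among patches without creating or destroying them; algebraically it is what will make the migration term disappear.

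With this in hand I would differentiate $S$ along a solution of \eqref{eq3} with $x(0) > 0$. Since $A(\tau) = R(\tau) + mL(\tau)$ and $\mathbf{1}^\top L(\tau) = 0$, the migration contribution cancels and one is left with
\[
\frac{dS}{dt} = \mathbf{1}^\top A(t/T)\, x(t) = \sum_{i=1}^n r_i(t/T)\, x_i(t).
\]
Here I invoke the positivity established earlier in the text: because $L(\tau)$ is irreducible and cooperative and $m>0$, one has $x_i(t) > 0$ for all $i$ and all $t > 0$. Hence each summand is bounded by $\big(\max_{1\le j\le n} r_j(t/T)\big)\, x_i(t)$, which yields the scalar differential inequality $\frac{dS}{dt} \le \big(\max_{1\le j\le n} r_j(t/T)\big)\, S(t)$.

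Integrating (Gronwall) gives $S(t) \le S(0)\exp\!\big(\int_0^t \max_j r_j(s/T)\,ds\big)$. Taking $\tfrac1t \ln(\cdot)$ and letting $t \to \infty$, the initial-data term vanishes while the Cesàro average of the $T$-periodic, piecewise continuous function $s \mapsto \max_j r_j(s/T)$ converges to its mean over one period, which after the change of variable $\tau = s/T$ is precisely $\int_0^1 \max_j r_j(\tau)\,d\tau = \chi$ from \eqref{chi}. Thus $\limsup_{t\to\infty} \tfrac1t \ln S(t) \le \chi$. Since $x_i(t) \le S(t)$ for every $i$, we obtain $\Lambda[x_i] = \lim_{t\to\infty} \tfrac1t \ln x_i(t) \le \chi$, and as this common value equals $\Lambda(m,T)$ by \eqref{Lambda}, the bound $\Lambda(m,T) \le \chi$ follows. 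The final assertion is then immediate: if $\chi \le 0$ then $\Lambda(m,T) \le 0$ for all $m,T>0$, so $\Lambda(m,T) > 0$ is impossible and DIG cannot occur.

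I do not expect a genuine obstacle here; the whole argument hinges on the single identity $\mathbf{1}^\top L(\tau) = 0$, after which it reduces to a one-dimensional Gronwall estimate together with the elementary convergence of time averages of a periodic function to its mean. The only points deserving a word of care are the appeal to strict positivity of all components (so that replacing each $r_i$ by $\max_j r_j$ is a legitimate upper bound) and the passage from the growth rate of $S$ to that of the individual $x_i$, both of which are cleanly handled by the sandwich $x_i(t) \le S(t) \le n\max_i x_i(t)$.
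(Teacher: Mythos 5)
Your proof is correct and is essentially the paper's own argument: both sum the equations so that the migration terms cancel (your identity $\mathbf{1}^\top L(\tau)=0$ is exactly why the paper's summed migration terms vanish), bound the resulting derivative of the total population by $\max_j r_j(t/T)$ times the total using positivity of the components, and then pass from the Gronwall estimate on $\rho=S$ to each $x_i\le\rho$ and to the time average $\chi$. The only difference is cosmetic — you sum first and then bound, while the paper bounds each equation by $r_{max}$ first and then sums.
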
 

\begin{proof}
We define $r_{max}(\tau)=\max_{1\leq i\leq n}r_i(\tau)$. From \eqref{eq1} we have
$$\frac{dx_i}{dt}\leq r_{max}(t/T)x_i+m\sum_{j\neq i}\left(\ell_{ij}(t/T)x_j-\ell_{ji}(t/T)x_i\right),\qquad 1\leq i\leq n.$$
Adding these equations and setting $\rho=\sum_{i=1}^nx_i$ we have
$$\frac{d\rho}{dt}\leq r_{max}(t/T)\rho(t),$$
which implies
$$\begin{array}{lcl}
\Lambda(m,T)&=&\Lambda[x_i]=\lim_{t\to\infty}\frac{1}{t}\ln(x_i(t))
\leq \limsup_{t\to\infty}\frac{1}{t}\ln(\rho(t))\\
&\leq& \limsup_{t\to\infty}\frac{1}{t}\int_0^t
r_{max}(s/T)ds=\int_0^1r_{max}(\tau)d\tau=\chi.
\end{array}
$$
This proves the theorem.
\end{proof}

\begin{rem}\label{remDIG}
Consider an idealized habitat whose growth rate at any time, is that of the habitat with maximal growth at this time. Hence $\chi$, defined by \eqref{chi}, is the average growth rate in this idealized habitat. 
If the population does not grow exponentially in this idealized habitat (i.e. if $\chi\leq 0$), then from Theorem \ref{upperborneLambda} we deduce that DIG does not occur. 
\end{rem}

One of our main results is that the condition $\chi>0$ which is necessary for DIG to occur is also sufficient, i.e. as soon as $\chi>0$ then there are values of $m$ and $T$ for which $\Lambda(m,T)>0$. For this reason we call $\chi$ the \emph{DIG threshold}. To prove this result we will study the asymptotic behavior of $\Lambda(m,T)$ when $m$ and $T$ are infinitely small or infinitely large.

\section{Results}\label{Results}

\subsection{Definitions and notations} 
For the statement of results, it is necessary to recall some classical results. 
If a matrix $A$ is  \emph{Metzler} and irreducible, 
from the Perron-Frobenius theorem, we know that its spectral abscissa, i.e. the maximum of the real parts of its eigenvalues, is an eigenvalue of $A$, usually called its \emph{dominant eigenvalue}, or the \emph{Perron-Frobenius root}  and denoted 
$\lambda_{max}(A)$, 
see Theorem \ref{PFtheorem} in Appendix \ref{PFT}.
If $A$ is symmetric, then $\lambda_{max}(A)$ is simply the maximal eigenvalue of $A$.

We also need the following result, which is well known in the literature, see for example 
\cite[Lemma 1]{cosner},
\cite[Lemma 1]{arino}, \cite[Lemma 4.1]{elbetch2021} or \cite[Lemma 3.1]{elbetch2022}. 
\begin{lem}\label{lm41}
If a matrix $L$ is Metzler irreducible and its columns sum to $0$, then, $0$ is a simple eigenvalue of $L$ and all non-zero eigenvalues of $L$ have negative real part. Moreover, 
the null space of the matrix $L$ is generated by a positive vector.
If the matrix $L$ is symmetric, then this vector is $\delta=(1,...,1)^\top$.
\end{lem}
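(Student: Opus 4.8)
The plan is to prove Lemma \ref{lm41} as a direct consequence of the Perron--Frobenius theorem applied to a suitably shifted matrix, so that all the positivity and simplicity information is transferred from the classical theory. First I would fix a scalar $c>0$ large enough that the matrix $M:=L+cI$ has all entries nonnegative; such a $c$ exists because $L$ is Metzler (its off-diagonal entries are already nonnegative, so one only needs to dominate the diagonal). Since adding a multiple of the identity does not change eigenvectors and shifts every eigenvalue by $c$, the matrix $M$ is nonnegative and irreducible (irreducibility depends only on the off-diagonal zero pattern, which is unchanged). By the Perron--Frobenius theorem for irreducible nonnegative matrices (Theorem \ref{PFtheorem} in Appendix \ref{PFT}), $M$ has a simple real eigenvalue equal to its spectral radius $\rho(M)$, with a strictly positive associated eigenvector, and every other eigenvalue has modulus at most $\rho(M)$.

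Next I would identify the Perron root of $M$ explicitly. The key observation is that the hypothesis ``columns of $L$ sum to $0$'' means $\mathbf{1}^\top L = 0$, where $\mathbf{1}=(1,\dots,1)^\top$; equivalently $\mathbf{1}$ is a left eigenvector of $L$ with eigenvalue $0$, hence a left eigenvector of $M$ with eigenvalue $c$. A left Perron eigenvector of an irreducible nonnegative matrix is strictly positive, and conversely the eigenvalue associated to any strictly positive left eigenvector must be the Perron root; since $\mathbf{1}\gg 0$, this forces $\rho(M)=c$. Translating back by subtracting $c$, the dominant eigenvalue of $L$ is $0$, it is simple (since $\rho(M)=c$ is simple for $M$), and every other eigenvalue $\lambda$ of $L$ satisfies $|\lambda+c|\le c$ with equality impossible for $\lambda\neq 0$ (by the simplicity of the Perron root and irreducibility, no other eigenvalue attains modulus $\rho(M)$ along the real axis). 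The disk inequality $|\lambda+c|\le c$ with $\lambda\neq 0$ then yields $\operatorname{Re}(\lambda)<0$, which is exactly the claim that all nonzero eigenvalues have negative real part.

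For the statement about the null space, I would apply Perron--Frobenius once more to the right eigenvector: $M$ has a strictly positive right eigenvector $v\gg 0$ for $\rho(M)=c$, and this same $v$ satisfies $Lv=0$, so the null space of $L$ is spanned by a positive vector. Finally, in the symmetric case, the column-sum condition combined with symmetry gives that the row sums also vanish, i.e. $L\mathbf{1}=0$, so $\delta=\mathbf{1}=(1,\dots,1)^\top$ lies in the (one-dimensional) kernel and therefore generates it.

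I expect the only genuinely delicate point to be the strict inequality $\operatorname{Re}(\lambda)<0$ for nonzero eigenvalues, rather than merely $\operatorname{Re}(\lambda)\le 0$. The circle $|\lambda+c|=c$ is tangent to the imaginary axis at the origin, so an eigenvalue on this circle could a priori have zero real part only if it equals $0$ itself; ruling out other boundary eigenvalues requires invoking that for an \emph{irreducible} (as opposed to merely nonnegative) matrix the Perron root is the \emph{unique} eigenvalue of maximal modulus, so no $\lambda\neq 0$ can satisfy $|\lambda+c|=c$. This uniqueness-of-the-dominant-eigenvalue feature of irreducibility is precisely what the hypothesis buys us, and it is where the proof would lean most heavily on the precise form of Theorem \ref{PFtheorem}.
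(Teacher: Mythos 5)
Your skeleton is the same as the paper's: the paper proves Lemma \ref{lm41} by citing Theorem \ref{PFtheorem} together with the observation that $\lambda_{max}(L)=0$, and Theorem \ref{PFtheorem} is itself obtained by the very shift $L+cI$ you perform, with the column-sum condition $\mathbf{1}^\top L=0$ pinning the dominant eigenvalue at $0$ through the positive left eigenvector, exactly as in your identification $\rho(M)=c$. Your steps establishing $\rho(M)=c$, the algebraic simplicity of $0$, the positive right kernel vector, and the symmetric case are all correct.

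There is, however, a genuine error in how you justify the strict inequality $\Re(\lambda)<0$, and it matters because you single it out as the crux. It is false that irreducibility makes the Perron root the unique eigenvalue of maximal modulus; that is what \emph{primitivity} buys, not irreducibility. Concretely, $L=\left(\begin{smallmatrix}-1&1\\ 1&-1\end{smallmatrix}\right)$ satisfies every hypothesis of the lemma, and with $c=1$ the shifted matrix $M=\left(\begin{smallmatrix}0&1\\ 1&0\end{smallmatrix}\right)$ is irreducible with eigenvalues $\pm 1$, both of modulus $\rho(M)=1$: the eigenvalue $\lambda=-2$ of $L$ sits exactly on your boundary circle $|\lambda+c|=c$ (and on the real axis, refuting your parenthetical as well). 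So ``equality impossible for $\lambda\neq 0$'' is wrong. Fortunately the claim is not needed: the closed disk $|z+c|\le c$ meets the closed right half-plane only at $z=0$, since $|\lambda+c|\le c$ is equivalent to $(\Re\lambda)^2+2c\,\Re\lambda+(\Im\lambda)^2\le 0$, which forces $\Re\lambda<0$ whenever $\lambda\neq 0$. Thus the weak inequality that Perron--Frobenius does provide already yields the strict real-part bound, boundary eigenvalues and all; alternatively you could simply invoke Theorem \ref{PFtheorem} in the form stated in the paper, which directly asserts $\Re(\lambda)<\lambda_{max}(A)$ for every other eigenvalue. With that one repair your proof is complete and coincides with the paper's.
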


This result follows from Theorem \ref{PFtheorem} in Appendix \ref{PFT} and the fact that the spectral abscissa of $L$ is $\lambda_{max}(L)=0$.

\begin{rem}\label{delta}
A positive vector  $\delta=(\delta_1,\ldots, \delta_n)^\top$ which generates the null space of the matrix $L$ is given explicitly by
$\delta_i=(-1)^{n-1}L_{ii}^*$, 
 where  
$L_{ii}^*$ is the co-factor of the $i$-th diagonal entry $l_{ii}$ of  $L$, see \cite[Lemma 2.1]{guo} or \cite[Lemma 3.1]{gao}. 
\end{rem}

The following notations are used:
\begin{itemize}
\item If $u(\tau)$ is any 1-periodic object (number, vector, matrix...), we denote by $\overline{u}=\int_0^1u(\tau)d\tau$ its average on one period.  Therefore the number  \eqref{chi} of Katriel  is denoted
$\chi:=\displaystyle\overline{\max_{1\leq i\leq n}r_i}$.
\item For all $\tau\in(0,1)$, $p(\tau)\gg 0$ is the unique positive eigenvector of $L(\tau)$, such that 
$L(\tau)p(\tau)=0$, $\sum_{i=1}^np_i(\tau)=1$ (exists according to Lemma \ref{lm41}, since $L(\tau)$ is Metzler irreducible and its columns sum to 0). 
{Note that $p(\tau)$ is piecewise continuous, so that the
integrals involving this function defined later are well-defined.}
If the matrix $L(\tau)$ is symmetric, then $p_i(\tau)=1/n$ for all $i$. 
\item Similarly,
$q\gg 0$ is the unique positive eigenvector of $\overline{L}$ such that 
$\overline{L}q=0$ and
$\sum_{i=1}^nq_i=1$ (exists according to Lemma \ref{lm41}, since $\overline{L}$ is Metzler irreducible and its columns sum to 0). It should be noticed that, in general, we do not have $q=\overline{p}$, where $p(\tau)$ is the positive eigenvector of $L(\tau)$.
\item For all $\tau\in[0,1]$, $\lambda_{max}(R(\tau)+mL(\tau))$ is the dominant eigenvalue of the matrix $R(\tau)+mL(\tau)$ (exists, since $R(\tau)+mL(\tau)$ is Metzler irreducible). 
\item Similarly, the dominant eigenvalue $\lambda_{max}\left(\overline{R+mL}\right)$ is also well defined. 

\end{itemize}

\subsection{Asymptotics of $\Lambda(m,T)$ for large or small $m$ and $T$}\label{asymptotics}

We have the following result on the limits of  $\Lambda(m,T)$ as $T\to 0$, $T\to\infty$, $m\to 0$ or $m\to\infty$.

\begin{theorem} \label{thm1} 
{Assume that Hypotheses \ref{H1} and \ref{H2} are satisfied}. The growth rate $\Lambda(m,T)$ of \eqref{eq3} satisfies the following properties

\begin{enumerate}
\item 
{\bf{(Fast regime)}} For all $m>0$ we have 
\begin{equation}\label{T=0}
{
\Lambda(m,0):=\lim_{T\to 0}\Lambda(m,T)}
=\lambda_{max}\left(\overline{R+mL}\right).
\end{equation}

\item 
{\bf{(Slow regime)}} For all $m>0$ we have  
\begin{equation}
\label{T=infini}
{
\Lambda(m,\infty):=\lim_{T\to\infty}\Lambda(m,T)}
=\overline{\lambda_{max}({R}+mL)}.
\end{equation}

\item 
{\bf{(Slow migration)}} For all $T>0$ we have
\begin{equation}
\label{m=0}
{
\Lambda(0,T):=\lim_{m\to0}\Lambda(m,T)}=\max_{1\leq i\leq n}\overline{r}_i.
\end{equation}

\item 
{\bf{(Fast migration)}} For all $T>0$ we have
\begin{equation}
\label{m=infini}
{
\Lambda(\infty,T):=\lim_{m\to\infty}\Lambda(m,T)}=\sum_{i=1}^n\overline{p_ir_i}.
\end{equation}
\end{enumerate}
\end{theorem}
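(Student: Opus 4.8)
The plan is to derive all four limits from the representation $\Lambda(m,T)=\frac1T\ln\mu(m,T)$ of Proposition \ref{Prop3}, using the method of averaging for the fast regime, a soft continuity argument for slow migration, and Tikhonov's theorem (Appendix \ref{SecTikhonov}) for the two genuinely singular limits. A device I will use throughout the singular cases is to split a positive solution $x$ into its size $\rho=\mathbf 1^\top x=\sum_i x_i$ and its profile $w=x/\rho$, living in the simplex $\Sigma=\{w\gg0:\mathbf 1^\top w=1\}$, where $\mathbf 1=(1,\ldots,1)^\top$. Because the columns of $L(\tau)$ sum to $0$, i.e. $\mathbf 1^\top L(\tau)=0$ by \eqref{Lii}, the migration term drops out of the size equation, and one obtains $\frac{d}{dt}\ln\rho=\mathbf 1^\top A(t/T)w=\sum_i r_i(t/T)w_i$ together with $\dot w=A(t/T)w-(\mathbf 1^\top A(t/T)w)\,w$. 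Taking $x(0)$ equal to the (positive) Perron eigenvector of $\Phi(T)$ makes $w$ periodic and yields the exact identity $\Lambda(m,T)=\frac1T\int_0^T\sum_i r_i(t/T)w_i(t)\,dt$, so that each singular limit reduces to identifying the limiting profile.

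\emph{Fast regime ($T\to0$) and slow migration ($m\to0$).} For $T\to0$ I would work on the monodromy matrix directly: rescaling time by $s=t/T$ turns \eqref{FMA} into $dX/ds=TA(s)X$ with $X(0)=Id$, so $\Phi(T)=X(1)$, and the Peano--Baker expansion gives $\Phi(T)=Id+T\,\overline{R+mL}+O(T^2)$. Since $\overline{R+mL}$ is Metzler and irreducible its dominant eigenvalue is simple, and analytic perturbation of the spectral radius gives $\mu(m,T)=1+T\,\lambda_{max}(\overline{R+mL})+O(T^2)$; taking $\frac1T\ln(\cdot)$ yields \eqref{T=0}. Slow migration is the softest case: $m\mapsto\Phi(T)$ is analytic and the spectral radius is continuous, so $\mu(m,T)\to\mu(0,T)$, and at $m=0$ the matrix is diagonal, so \eqref{diagcase} gives $\Phi(T)|_{m=0}=\mathrm{diag}(e^{T\overline{r}_i})$ with spectral radius $e^{T\max_i\overline{r}_i}$, whence \eqref{m=0}. (The $m=0$ matrix is reducible, which is precisely why $\Lambda(0,T)$ must be read as a limit.)

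\emph{Slow regime ($T\to\infty$) and fast migration ($m\to\infty$).} Both are singular perturbations that I would settle with the profile device and Tikhonov's theorem. For $T\to\infty$, pass to the slow time $\tau=t/T$: the profile obeys $\frac{dw}{d\tau}=T[A(\tau)w-(\mathbf 1^\top A(\tau)w)w]$, a fast equation whose frozen-$\tau$ flow on $\Sigma$ is globally attracted, by Perron--Frobenius (Theorem \ref{PFtheorem}), to the normalized positive eigenvector $v(\tau)$ of $A(\tau)=R(\tau)+mL(\tau)$. Tikhonov's theorem then gives $w\to v$ away from a boundary layer, and since the integrand converges to $\mathbf 1^\top A(\tau)v(\tau)=\lambda_{max}(A(\tau))$ the identity above tends to $\int_0^1\lambda_{max}(R(\tau)+mL(\tau))\,d\tau$, i.e. \eqref{T=infini}. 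For $m\to\infty$ I instead keep $t$ fixed and take $1/m$ as the small parameter: the profile equation reads $\dot w=R(t/T)w-(\mathbf 1^\top R(t/T)w)w+mL(t/T)w$, whose fast part $mL(\tau)w$ relaxes on $\Sigma$ to the null eigenvector $p(\tau)$ of $L(\tau)$ (Lemma \ref{lm41}). Tikhonov again gives $w\to p$, the integrand becomes $\sum_i r_i(\tau)p_i(\tau)$, the dependence on $m$ disappears, and one obtains $\sum_i\overline{p_ir_i}$, i.e. \eqref{m=infini}.

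\emph{Main obstacle.} The real work is the rigorous application of Tikhonov's theorem in the last two regimes, on three fronts. First, under Hypothesis \ref{H1} the coefficients are only piecewise continuous, so $v(\tau)$, $p(\tau)$ and $\lambda_{max}(A(\tau))$ inherit finitely many jumps; one must use a version of Tikhonov's theorem admitting a finite set of discontinuities, a fresh boundary layer being created at each jump. Second, the global asymptotic stability on $\Sigma$ of the reduced equilibria $v(\tau)$ and $p(\tau)$ must be established uniformly in $\tau$, which is exactly where the simplicity of the Perron root and the spectral gap furnished by Theorem \ref{PFtheorem} and Lemma \ref{lm41} enter. Third, one must check that the transient boundary layer, during which the periodic profile has not yet reached the slow manifold, contributes an amount that vanishes after division by $T$ (respectively becomes independent of $m$), so that the limiting integral is left unchanged. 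Controlling these boundary-layer contributions uniformly is the technical heart of the proof.
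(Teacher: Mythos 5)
Your proposal is correct, and for three of the four limits it follows the paper's own route: the exact identity you derive for $\Lambda(m,T)$ by starting the flow at the Perron vector of $\Phi(T)$ is precisely the paper's integral formula \eqref{Lambda=Lambda[rho]1}--\eqref{Lambdatheta*0} (Theorem \ref{thm2}); the slow-migration limit \eqref{m=0} is proved in Section \ref{SM} exactly by your continuity-of-the-Perron-root argument on the diagonal limit matrix; and the slow-regime and fast-migration limits \eqref{T=infini}, \eqref{m=infini} are proved in Sections \ref{LFL} and \ref{FM} by the same simplex reduction plus Tikhonov scheme you describe, with the three obstacles you flag (discontinuities, uniform GAS of the frozen equilibria, boundary-layer control) handled by the paper's Proposition \ref{PropTikhonov} (Tikhonov with finitely many jumps), Proposition \ref{periodicPsi1} (GAS of the Perron--Frobenius equilibrium on $\Delta$), and the ``$\nu$ as small as we want'' argument in the integral.

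Where you genuinely diverge is the fast regime \eqref{T=0}. The paper stays inside its unified framework: it applies an averaging theorem for piecewise-continuous periodic vector fields (Theorem \ref{avthm}) to the simplex equation, shows the periodic profile converges to the Perron--Frobenius vector $w$ of $\overline{A}$, and evaluates the integral formula at $w$. You instead work directly on the monodromy matrix via the Peano--Baker expansion $\Phi(T)=Id+T\,\overline{R+mL}+O(T^2)$ and spectral perturbation. Your route is more elementary (it needs no averaging theorem at all, and the series argument is insensitive to the discontinuities of $A$), while the paper's route has the advantage of reusing the same machinery for items 1, 2 and 4. One point in your version should be tightened: the unperturbed matrix is $Id$, whose eigenvalue $1$ is $n$-fold degenerate, so ``analytic perturbation of the spectral radius'' is not immediate. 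The clean fix is to observe that $(\Phi(T)-Id)/T$ is Metzler (indeed $\Phi(T)$ has positive entries), converges to $\overline{R+mL}$ as $T\to0$, and that the Perron root of $\Phi(T)$ corresponds to its spectral abscissa, which is continuous (the paper's Lemma \ref{lem:regularity-lambdamax}); this yields $\mu(m,T)=1+T\,\lambda_{max}(\overline{R+mL})+o(T)$ and hence \eqref{T=0}. With that repair your argument is complete.
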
 

\begin{proof}
{
The formula \eqref{T=0} is proved in Section \ref{HFL}, \eqref{T=infini} is proved in Section \ref{LFL}, \eqref{m=0} is proved in Section \ref{SM} and \eqref{m=infini} is proved in Section \ref{FM}.}
\end{proof}

 We have the following results on the limit functions $\Lambda(m,0)$ and $\Lambda(m,\infty)$.

\begin{propo}\label{PropDL}
{Assume that Hypotheses \ref{H1} and \ref{H2} are satisfied}.
The functions $\Lambda(m,0)$ and $\Lambda(m,\infty)$ defined by \eqref{T=0} and \eqref{T=infini} respectively
satisfy the following properties.
\begin{align}
\label{T=0m}
{
\Lambda(0,0):=\lim_{m\to0}\Lambda(m,0)}=\max_{1\leq i\leq n}\overline{r}_i,
&\quad
{
\Lambda(\infty,0):=\lim_{m\to\infty}\Lambda(m,0)}=\sum_{i=1}^n
q_i\overline{r}_i.\\
\label{T=infini,m=0}
{
\Lambda(0,\infty):=\lim_{m\to 0}\Lambda(m,\infty)}=\chi,
&\quad
{
\Lambda(\infty,\infty):=\lim
_{m\to \infty}\Lambda(m,\infty)}
=\sum_{i=1}^n\overline{p_i{r}_i}.
\end{align}	
Moreover, we have
\begin{equation}\label{Lambda(m,0)convex}
\frac{d}{dm}\Lambda(m,0)\leq 0,\qquad \frac{d^2}{dm^2}\Lambda(m,0)\geq 0,
\end{equation}
and equalities hold {if and only if}  $\overline{r}_i=\overline{r}$, for all $i$, {in which case}  $\Lambda(m,\infty)=\overline{r}$ for all $m>0$ and we have
\begin{equation}\label{Lambda(m,infini)convex}
\frac{d}{dm}\Lambda(m,\infty)\leq 0,\qquad \frac{d^2}{dm^2}\Lambda(m,\infty)\geq 0,
\end{equation}
and equalities hold {if and only if ${r}_i(\tau)={r}(\tau)$, for all $i$, in which case}
$\Lambda(m,T)=\overline{r}$ for all $m>0$ and $T>0$. 

In the constant migration case, for all $m>0$ and $T>0$, we have 
\begin{equation}\label{Lambda(m,0)<Lambda(m,T)}
\Lambda(m,0)\leq \Lambda(m,T),
\end{equation}
and hence $\Lambda(m,0)\leq \Lambda(m,\infty)$, for all $m>0$.
\end{propo}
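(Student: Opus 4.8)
The plan is to reduce the whole statement to the two closed forms provided by Theorem \ref{thm1}, namely $\Lambda(m,0)=\lambda_{max}(\overline{R}+m\overline{L})$ from \eqref{T=0} and $\Lambda(m,\infty)=\overline{\lambda_{max}(R+mL)}$ from \eqref{T=infini}, and then to study these two explicit functions of $m$. The four double limits \eqref{T=0m}--\eqref{T=infini,m=0} are obtained by letting $m\to0$ and $m\to\infty$ in these formulas. For $m\to0$, continuity of $\lambda_{max}$ and the fact that $\overline{R}$ (resp. $R(\tau)$) is diagonal give $\lambda_{max}(\overline{R})=\max_i\overline{r}_i$ and, after passing to the limit under the integral by dominated convergence (the data are bounded by Hypothesis \ref{H1}), $\overline{\max_i r_i}=\chi$. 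For $m\to\infty$, I would invoke the fast-migration (singular perturbation) analysis of Section \ref{FM} applied to the autonomous matrix $\overline{R}+m\overline{L}$ to get $\lambda_{max}(\overline{R}+m\overline{L})\to\sum_i q_i\overline{r}_i$, and applied pointwise in $\tau$ to get $\lambda_{max}(R(\tau)+mL(\tau))\to\sum_i p_i(\tau)r_i(\tau)$, whose integral yields $\sum_i\overline{p_ir_i}$; the interchange of limit and integral is justified by the monotonicity in $m$ established below together with monotone convergence.

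The crux is the monotonicity and convexity \eqref{Lambda(m,0)convex}. Here I would use $\lambda_{max}(\overline{R}+m\overline{L})=\lambda_{max}(\overline{R}+m\overline{L}^{\top})$ (equal spectra, $\overline{R}$ symmetric), where $\overline{L}^{\top}$ is a genuine Markov generator since $\overline{L}$ has zero column sums (Lemma \ref{lm41}). The key tool is the Donsker--Varadhan / Feynman--Kac variational representation
\[
\Lambda(m,0)=\lambda_{max}(\overline{R}+m\overline{L}^{\top})=\sup_{\mu}\left(\langle\mu,\overline{r}\rangle-m\,I(\mu)\right),
\]
where the supremum runs over probability vectors $\mu$ and $I\ge0$ is the rate function of the chain with generator $\overline{L}^{\top}$ (using $I_{m\overline{L}^{\top}}=m\,I_{\overline{L}^{\top}}$), vanishing exactly at the stationary distribution $q$. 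As a supremum of functions affine in $m$ with slopes $-I(\mu)\le0$, $\Lambda(m,0)$ is convex and non-increasing, which is \eqref{Lambda(m,0)convex}. The identical argument at each fixed $\tau$ (generator $L(\tau)^{\top}$, rate function vanishing at $p(\tau)$) shows $m\mapsto\lambda_{max}(R(\tau)+mL(\tau))$ is convex and non-increasing; integrating over $\tau$ preserves both properties and yields \eqref{Lambda(m,infini)convex}.

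For the equality cases I would argue as follows. If $\overline{r}_i=\overline{r}$ for all $i$ then $\overline{R}=\overline{r}\,\mathrm{Id}$ and $\lambda_{max}(\overline{r}\,\mathrm{Id}+m\overline{L})=\overline{r}+m\lambda_{max}(\overline{L})=\overline{r}$, so $\Lambda(m,0)$ is constant and both inequalities are equalities. Conversely, by the envelope theorem $\frac{d}{dm}\Lambda(m,0)=-I(\mu_m)$ for the maximizer $\mu_m$; a vanishing derivative forces $\mu_m=q$, and a first-order expansion of the variational problem at the interior point $q$ shows this is possible only if $\langle\nu,\overline{r}\rangle=0$ for every zero-sum $\nu$, i.e. $\overline{r}\in\mathrm{span}(\mathbf 1)$, so the inequalities are strict unless all $\overline{r}_i$ coincide. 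The same reasoning pointwise gives the condition $r_i(\tau)=r(\tau)$ for \eqref{Lambda(m,infini)convex}; in that degenerate case $R(\tau)=r(\tau)\,\mathrm{Id}$, and the substitution $x(t)=\exp\!\big(\int_0^t r(s/T)\,ds\big)y(t)$ turns \eqref{eq3} into $\dot y=mL(t/T)y$, which conserves $\mathbf 1^{\top}y$ because $L$ has zero column sums; hence $\Lambda[y_i]=0$ and $\Lambda(m,T)=\overline{r}$ for all $m$ and $T$.

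Finally, for \eqref{Lambda(m,0)<Lambda(m,T)} I would restrict to constant migration $L(\tau)\equiv L$ and use the monotonicity of $T\mapsto\Lambda(m,T)$ recalled after Proposition \ref{Prop3} (following Liu et al. \cite{liu}): since $\Lambda(m,0)=\lim_{T\to0}\Lambda(m,T)$ and $\Lambda(m,\infty)=\lim_{T\to\infty}\Lambda(m,T)$, monotonicity gives $\Lambda(m,0)\le\Lambda(m,T)\le\Lambda(m,\infty)$ at once. Alternatively, the weaker bound $\Lambda(m,0)\le\Lambda(m,\infty)$ is Jensen's inequality applied to the convexity of $D\mapsto\lambda_{max}(D+mL)$ in the diagonal $D=R(\tau)$. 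I expect the main obstacle to be the convexity-and-monotonicity step together with its sharp equality characterization: the variational representation is what makes the signs transparent, and obtaining the strict-versus-degenerate dichotomy cleanly is the part requiring the most care.
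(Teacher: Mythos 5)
Your proof of the last claim, \eqref{Lambda(m,0)<Lambda(m,T)}, has a genuine gap. You invoke ``the monotonicity of $T\mapsto\Lambda(m,T)$'' for constant migration, attributing it to Liu et al.\ \cite{liu} as recalled in the paper. But that monotonicity is only established for constant \emph{and symmetric} migration (Katriel's setting, \cite[Lemma 2]{Katriel}); for constant non-symmetric $L$ it is precisely Conjecture \ref{Conj1}, which the authors explicitly state they are unable to prove (see Section \ref{Monotonie}). Since \eqref{Lambda(m,0)<Lambda(m,T)} is asserted for \emph{all} time-independent migration matrices, your argument either assumes an open conjecture or silently restricts to the symmetric case. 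The paper's proof avoids monotonicity entirely: writing $C(t)=R(t/T)+mL$, a $T$-periodic cooperative matrix with constant off-diagonal entries, it applies Mierczy\'nski's estimate \cite[Theorem II.5.3]{Mierczynski} to get $\Lambda(m,T)\geq\lambda_{max}(\overline{C})=\lambda_{max}(\overline{R}+mL)=\Lambda(m,0)$, valid with no symmetry assumption. Note also that your fallback Jensen argument (convexity of the spectral abscissa in the diagonal) only delivers the weaker corollary $\Lambda(m,0)\leq\Lambda(m,\infty)$, not \eqref{Lambda(m,0)<Lambda(m,T)} itself, so it cannot close the gap.

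The rest of the proposal is essentially correct but follows different routes from the paper, and these are worth noting. For the convexity and monotonicity \eqref{Lambda(m,0)convex}--\eqref{Lambda(m,infini)convex} with their equality cases, the paper simply cites \cite[Theorem 1.1]{chen}, whereas you reprove the result via the Donsker--Varadhan variational representation $\lambda_{max}(\overline{R}+m\overline{L})=\sup_{\mu}(\langle\mu,\overline{r}\rangle-mI(\mu))$: this is a legitimate, self-contained alternative (close in spirit to one of the proofs in \cite{chen}), though for the ``only if'' direction the differentiability of the rate function at the interior point $q$ and the Danskin/envelope step deserve explicit justification. For the $m\to\infty$ limits in \eqref{T=0m} and \eqref{T=infini,m=0}, the paper uses the eigenvalue-perturbation formula of Lemma \ref{lem:regularity-lambdamax}, writing $\lambda_{max}(\overline{R}+m\overline{L})=m\,\lambda_{max}\!\left(\overline{L}+\tfrac{1}{m}\overline{R}\right)$ and expanding to first order, with the interchange of limit and integral handled by dominated convergence via $\min_i r_i(\tau)\leq\lambda_{max}(R(\tau)+mL(\tau))\leq\max_i r_i(\tau)$; your singular-perturbation route through Section \ref{FM} and monotone convergence reach the same conclusions. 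Your degenerate-case argument ($r_i(\tau)=r(\tau)$ implies $\Lambda(m,T)=\overline{r}$ via the substitution $x=e^{\int_0^t r}y$ and conservation of ${\bf 1}^\top y$) is correct, and is in fact more complete than the paper, which leaves that assertion unproved.
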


\begin{proof}
The proof is given in Section \ref{ProofPropDL}
\end{proof}

{
 Inequality \eqref{Lambda(m,0)<Lambda(m,T)} is similar to the main result of \cite{Huston}. In the PDE context much has been done on the eigenvalue problem for operators of the form $ v(x,t) \to Hv(x,t)+ h(x,t)v(x,t) $ where $H$ is an unbounded operator on the space $B$  of periodic  functions and $h(x,t)$ is periodic with respect to $t$. In this context $H$ corresponds to our migration matrix $mL$ and the function $h(x,t)$ to our diagonal matrix $R(t)$. Thus it is likely that some of our results, are already present in the PDE literature. Nevertheless our regularity  assumption that $R(t)$ is only piecewise continuous is not usual and one must be very cautious.}

{The formulas \eqref{T=0m} and \eqref{T=infini,m=0} give} the limits of $\Lambda(m,0)$ and $\Lambda(m,\infty)$ as $m\to 0$ or $m\to\infty$.
The formulas \eqref{Lambda(m,0)convex} and \eqref{Lambda(m,infini)convex}
assert that the functions $m\to\Lambda(m,0)$ and $m\to\Lambda(m,\infty)$ are convex decreasing, in contrast with the functions $m\to\Lambda(m,T)$, for $T>0$, which are not always decreasing, see the figures in Section \ref{Numerical}. The last formula  \eqref{Lambda(m,0)<Lambda(m,T)} asserts that when the migration matrix $L$ is time independent, then for any $m>0$, $\Lambda(m,0)$ is a lower bound of $\Lambda(m,T)$. This property is not true in the case where the migration matrix is time dependent, see the supplementary material Figures S2(c), S4(a) and S5(c).

\begin{figure}[ht]
\begin{center}
\includegraphics[width=10cm,
viewport=160 490 450 690]{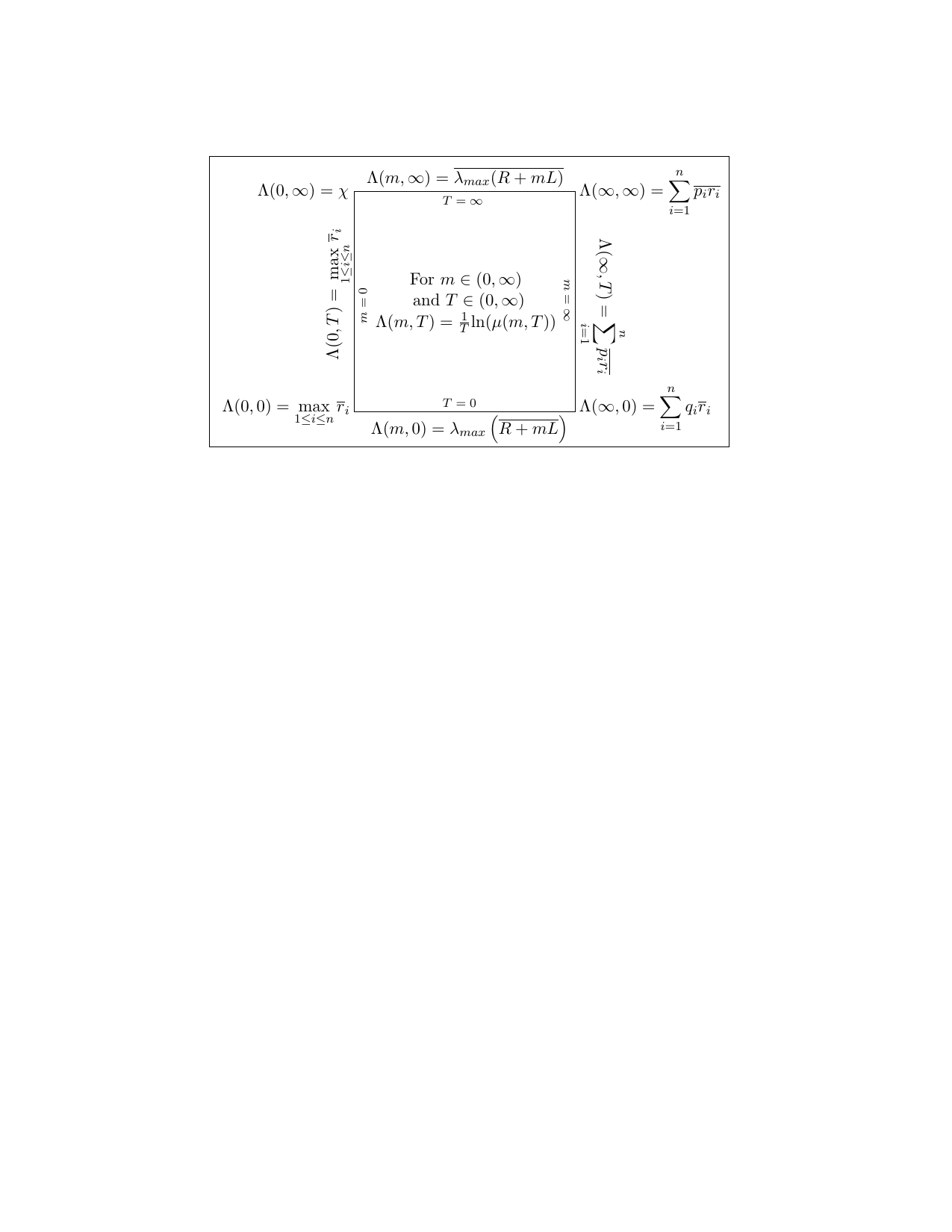}\caption{The definition of $\Lambda(m,T)$ and its limits when $T$ tends to 0 or $\infty$ and/or $m$ tends to 0 or $\infty$.
\label{fig1}}
\end{center}
\end{figure}

The results of Theorems \ref{thm1} and Proposition \ref{PropDL}
are summarized in Figure \ref{fig1}.
Note that 
$\textstyle
\Lambda(0,0)=\Lambda(0,T)$ and 
$\textstyle
\Lambda(\infty,\infty)=
\Lambda(\infty,T)$, for all $T>0$. 
However, in general $\chi\neq\max_{1\leq i\leq n}\overline{r}_i$, so that
$\textstyle
\Lambda(0,\infty)\neq
\lim_{T\to \infty}\Lambda(0,T)$. 
{In addition}, in general 
$\sum_{i=1}^nq_i\overline{r}_i\neq
\sum_{i=1}^n\overline{p_ir_i}$, 
so that
$
\textstyle
\Lambda(\infty,0)\neq
\lim_{T\to 0}\Lambda(\infty,T)$.

\begin{rem}\label{Lconstant}
In the case of time independent migration, for all $i$ we have $q_i=p_i$ where $p\gg 0$ is the positive eigenvector of $L$ such that $\sum_{i=1}^np_i=1$ and $Lp=0$. Hence, for all $T>0$, $\Lambda(\infty,0)=\Lambda(\infty,\infty)=\Lambda(\infty,T)=\sum_{i=1}^np_i\overline{r}_i$.
\end{rem}

\begin{rem}
In the case where the matrix of migrations $L(\tau)$ is symmetric we have $p_i(\tau)=1/n$, so that 
$\Lambda(\infty,0)=\Lambda(\infty,\infty)=\Lambda(\infty,T)
=\frac{1}{n}\sum_{i=1}^n\overline{r}_i.
$
 \end{rem} 

In the case where the migration matrix is constant and symmetric, and the local growth rates $r_i(\tau)$ are continuous functions, 
{the limits
\eqref{T=0} and \eqref{T=infini} were given in \cite[Theorem 2.1]{liu}, see also \cite[Lemma 8]{Katriel} and \cite[Lemma 5]{Katriel}}. The double limit 
\eqref{T=0m} was given in \cite[Lemma 9(i,ii)]{Katriel} and
\eqref{T=infini,m=0} was given in \cite[Theorem 1 and Lemma 7(ii)]{Katriel}. 
The results of items 3 (Slow migration) and 4 (Fast migration) of Theorem \ref{thm1} were not considered in \cite{Katriel}. 

\subsection{The DIG phenomenon}
Let $\chi$ be the DIG threshold defined by \eqref{chi}.
An immediate consequence of
\begin{equation}\label{chieq}
\Lambda(m,T)\leq \chi\quad\mbox{ and }\quad
\Lambda(0,\infty):=\lim_{m\to0}\lim_{T\to\infty}\Lambda(m,T)=\chi,
\end{equation}
proved in Theorem \ref{upperborneLambda}, and \eqref{T=infini,m=0}, respectively,  is the following result. 

\begin{theorem}\label{DIGthm}
Assume that Hypotheses \ref{H1} and \ref{H2} are satisfied.
We have  
$
\sup_{m,T}\Lambda(m,T)=\chi
.$
{Therefore, if $\overline{r}_i<0$ for all $i$, DIG occurs if and only if $\chi>0$}.   
\end{theorem}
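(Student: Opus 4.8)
The plan is to deduce Theorem~\ref{DIGthm} directly from the two facts assembled in \eqref{chieq}, treating the supremum identity first and the DIG characterization as an immediate corollary. The upper bound $\Lambda(m,T)\le\chi$ for every $m>0$ and $T>0$ is already established in Theorem~\ref{upperborneLambda}, so it gives $\sup_{m,T}\Lambda(m,T)\le\chi$ for free. The only thing left is the reverse inequality, i.e. that $\chi$ is actually approached. For this I would invoke the iterated limit $\lim_{m\to 0}\lim_{T\to\infty}\Lambda(m,T)=\chi$, which combines \eqref{T=infini,m=0} (stating $\Lambda(0,\infty)=\chi$) with the definition of $\Lambda(m,\infty)$ in \eqref{T=infini}. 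Concretely, for any $\varepsilon>0$ one first chooses $m$ small enough that $\Lambda(m,\infty)>\chi-\varepsilon/2$, using $\lim_{m\to0}\Lambda(m,\infty)=\chi$; then, holding this $m$ fixed, one chooses $T$ large enough that $\Lambda(m,T)>\Lambda(m,\infty)-\varepsilon/2$, using $\lim_{T\to\infty}\Lambda(m,T)=\Lambda(m,\infty)$ from \eqref{T=infini}. This produces a pair $(m,T)$ with $\Lambda(m,T)>\chi-\varepsilon$, and since $\varepsilon>0$ was arbitrary the supremum is at least $\chi$. Combined with the upper bound, $\sup_{m,T}\Lambda(m,T)=\chi$.

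With the supremum identity in hand, the DIG characterization follows purely formally from Definition~\ref{DIG}. Assume $\overline{r}_i<0$ for all $i$, so that all patches are sinks and the hypothesis of the DIG definition is met. If $\chi>0$, then since $\sup_{m,T}\Lambda(m,T)=\chi>0$ there must exist values of $m$ and $T$ with $\Lambda(m,T)>0$, which is exactly the statement that DIG occurs. Conversely, if DIG occurs then $\Lambda(m,T)>0$ for some $(m,T)$, whence $\chi=\sup_{m,T}\Lambda(m,T)\ge\Lambda(m,T)>0$. This gives the equivalence ``DIG occurs $\iff\chi>0$.''

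The proof is essentially a two-paragraph bookkeeping argument once the asymptotic results are available, and there is no genuine obstacle at the level of this theorem itself: everything reduces to the already-proved bound in Theorem~\ref{upperborneLambda} and the already-proved limits \eqref{T=infini} and \eqref{T=infini,m=0}. The one subtlety worth flagging is that the value $\chi$ is attained only as an \emph{iterated} boundary limit (first $T\to\infty$, then $m\to0$), not as a maximum over any interior pair $(m,T)$; indeed the remark following Figure~\ref{fig1} emphasizes that $\Lambda(0,\infty)\ne\lim_{T\to\infty}\Lambda(0,T)$ in general, so one cannot simply evaluate at $m=0$. Thus the only care needed is to order the two limits correctly and to phrase the conclusion as a supremum that is approached rather than achieved. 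Since $\sup$ already accommodates non-attained values, no compactness or existence-of-maximizer argument is required, and the statement follows cleanly.
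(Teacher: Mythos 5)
Your proof is correct and follows exactly the route the paper intends: the paper presents Theorem~\ref{DIGthm} as an immediate consequence of the two facts in \eqref{chieq} (the upper bound from Theorem~\ref{upperborneLambda} and the iterated limit $\lim_{m\to0}\lim_{T\to\infty}\Lambda(m,T)=\chi$ from \eqref{T=infini} and \eqref{T=infini,m=0}), and your $\varepsilon$-argument simply makes that deduction explicit, with the limits taken in the correct order. The observation that the supremum need not be attained, so no maximizer is needed, is also consistent with the paper's formulation.
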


This result is a first answer to the question posed in the title of the paper: a population spreading across sink habitats can grow exponentially if and only if $\chi > 0$.
As stated in Remark \ref{remDIG},
$\chi=\overline{\max_ir_i}$ is the average growth rate of an idealized habitat whose growth rate at any time is that of the habitat with maximal growth. Hence, the population can survive if and only it would survive in this idealized habitat. Moreover, thanks 
to Theorem \ref{thm1} and Proposition \ref{PropDL}, we can answer more precisely, as stated in the following remark.

\begin{rem} If $\chi>0$ and $\max_i\overline{r_i}<0$ then the population is growing exponentially if the environment is slowly varying and if the dispersal rate across the patch is small, but not too small. Indeed from $\lim_{m\to0}\Lambda(m,T)=\max_i\overline{r_i}<0$ we deduce that if $T$ is fixed and $m$ is very small then 
$\Lambda(m,T)<0$ and from the double limit in \eqref{chieq} we deduce that 
{
if $m$ is small enough and $T$ is large enough
(where `large enough' depends on the value of $m$)}, then 
$\Lambda(m,T)>0$.
\end{rem}

We can give a more precise description of the set of $m$ and $T$ for which {growth occurs}. 

\begin{propo}\label{PropDIG}
Assume that $\chi>0$ and $\max_{1\leq i\leq n}\overline{r}_i<0$. 
Two cases must be distinguished.\\
1. If $\sum_{i=1}^n\overline{p_ir_i}<0$,  then the equation $\Lambda(m,\infty)=0$ has a unique solution $m=m^*>0$, and  we have	
	\begin{itemize}
	\item If $m\in (0,m^*)$ then for any $T$ sufficiently large (depending on $m$), we have 
		$\Lambda(m,T)>0$ (growth) and for any $T$ sufficiently small (depending on $m$), we have 
		$\Lambda(m,T)<0$ (decay).
	\item If $m\geq m^*$ then for any $T$ sufficiently small or sufficiently large (depending on $m$), we have $\Lambda(m,T)<0$ (decay).  
	\end{itemize}	
2. If $\sum_{i=1}^n\overline{p_ir_i}\geq 0$,  then  $\Lambda(m,\infty)>0$ for all $m> 0$ and for any $T$ sufficiently large (depending on $m$), we have 
		$\Lambda(m,T)>0$ (growth) and for any $T$ sufficiently small (depending on $m$), we have 
		$\Lambda(m,T)<0$ (decay).
\end{propo}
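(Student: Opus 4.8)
The plan is to reduce the whole statement to the qualitative shape of the single-variable function $m\mapsto\Lambda(m,\infty)=\overline{\lambda_{max}(R+mL)}$ from \eqref{T=infini}, combined with the two limit relations $\lim_{T\to0}\Lambda(m,T)=\Lambda(m,0)$ and $\lim_{T\to\infty}\Lambda(m,T)=\Lambda(m,\infty)$ supplied by Theorem \ref{thm1}. First I would record the global shape of $\Lambda(\cdot,\infty)$. By \eqref{Lambda(m,infini)convex} it is convex and non-increasing, and the degenerate case (equality there, i.e.\ all $r_i(\tau)$ equal) is excluded under the standing hypotheses: if $r_i(\tau)\equiv r(\tau)$ then $\max_ir_i(\tau)=r(\tau)$ and $\overline r_i=\overline r$ for all $i$, so $\chi=\overline r=\max_i\overline r_i$, contradicting $\chi>0>\max_i\overline r_i$. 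Hence $m\mapsto\Lambda(m,\infty)$ is \emph{strictly} decreasing, with endpoint values $\Lambda(0,\infty)=\chi>0$ and $\Lambda(\infty,\infty)=\sum_i\overline{p_ir_i}$ from \eqref{T=infini,m=0}. In parallel, \eqref{Lambda(m,0)convex} shows $\Lambda(\cdot,0)$ is non-increasing, so with $\Lambda(0,0)=\max_i\overline r_i$ from \eqref{T=0m} we get $\Lambda(m,0)\le\max_i\overline r_i<0$ for every $m\ge0$; this single inequality will yield decay at all small $T$ throughout the statement.

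Next I would split according to the sign of $\Lambda(\infty,\infty)=\sum_i\overline{p_ir_i}$. In Case~1, where $\sum_i\overline{p_ir_i}<0$, the strictly decreasing continuous function $\Lambda(\cdot,\infty)$ runs from $\chi>0$ to a negative value, so by the intermediate value theorem it has a unique zero $m^*>0$, with $\Lambda(m,\infty)>0$ for $m<m^*$ and $\Lambda(m,\infty)<0$ for $m>m^*$. Fixing $m$ and varying $T$, I then read off the two boundary regimes from the limits: since $\lim_{T\to0}\Lambda(m,T)=\Lambda(m,0)<0$, we have $\Lambda(m,T)<0$ for all sufficiently small $T$ (decay), for every $m>0$; and since $\lim_{T\to\infty}\Lambda(m,T)=\Lambda(m,\infty)$, its sign decides the large-$T$ behaviour, giving growth for $m<m^*$ and decay for $m>m^*$. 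In Case~2, where $\sum_i\overline{p_ir_i}\ge0$, strict monotonicity forces $\Lambda(m,\infty)>\lim_{m'\to\infty}\Lambda(m',\infty)=\sum_i\overline{p_ir_i}\ge0$ for every finite $m>0$, so the large-$T$ limit is strictly positive; this gives growth for large $T$ and, as before, decay for small $T$, with no threshold in $m$.

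The main obstacle is the single boundary value $m=m^*$ in Case~1. For every $m>m^*$ the limit $\Lambda(m,\infty)<0$ is strictly negative and ``decay for large $T$'' is immediate, but at $m=m^*$ we only have $\Lambda(m^*,\infty)=0$, and knowing $\lim_{T\to\infty}\Lambda(m^*,T)=0$ does not by itself fix the sign of $\Lambda(m^*,T)$ for large finite $T$, which the statement nonetheless claims to be negative. Closing this case requires controlling the \emph{direction} of approach, i.e.\ an estimate finer than the leading term of the slow-regime ($T\to\infty$) asymptotics behind \eqref{T=infini}. I would attempt to prove the strict inequality $\Lambda(m,T)<\Lambda(m,\infty)$ for all finite $T$ whenever $\lambda_{max}(R(\tau)+mL(\tau))$ is non-constant in $\tau$ — which holds at $m=m^*$, since its average is $0$ while the quantity is not identically $0$ — thereby forcing $\Lambda(m^*,T)<0$; natural routes are the first correction term in the Tikhonov singular-perturbation expansion used for \eqref{T=infini}, or a Lyapunov-type comparison built from the instantaneous Perron left-eigenvector of $R(\tau)+m^*L(\tau)$. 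I expect this strict slow-regime bound, rather than the elementary monotonicity-and-intermediate-value bookkeeping of the first two paragraphs, to be where the genuine difficulty lies, especially because the naive logarithmic-norm and Collatz--Wielandt estimates for $\Lambda(m,T)$ point the wrong way and do not settle the sign of the $O(1/T)$ correction.
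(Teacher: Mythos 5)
Your first two paragraphs coincide with the paper's own proof. The paper likewise invokes Proposition \ref{PropDL} to get that $m\mapsto\Lambda(m,0)$ decreases from $\max_{i}\overline{r}_i<0$ to $\sum_{i}q_i\overline{r}_i<0$ (hence $\Lambda(m,0)<0$ for all $m>0$), that $m\mapsto\Lambda(m,\infty)$ decreases strictly from $\chi>0$ to $\sum_{i}\overline{p_ir_i}$, produces the unique zero $m^*$ in Case 1 by the intermediate value theorem, and then reads off the sign of $\Lambda(m,T)$ for small and large $T$ from the limits \eqref{T=0} and \eqref{T=infini}. Up to that point the two arguments are identical, bookkeeping included.

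The divergence is your third paragraph, and there your plan would not succeed. First, be aware that the paper performs no finer analysis at $m=m^*$: its proof applies the limit argument verbatim to all $m\geq m^*$ (``if $m\geq m^*$, then for sufficiently large $T$ we have $\Lambda(m,T)<0$''), even though at $m=m^*$ the limit is $0$ and by itself fixes no sign; so you have identified a point the paper glosses over, not one it resolves. Second, the inequality you propose as the fix, namely $\Lambda(m,T)<\Lambda(m,\infty)$ whenever $\tau\mapsto\lambda_{max}(R(\tau)+mL(\tau))$ is non-constant, is false at the level of generality of this paper: for time-dependent migration, Section \ref{NotIncr} gives examples in which $T\mapsto\Lambda(m,T)$ is \emph{decreasing} for $m$ large, i.e. $\Lambda(m,T)>\Lambda(m,\infty)$ for all finite $T$, and the supplementary material notes explicitly that $\Lambda(m,T)$ can take values greater than $\Lambda(m,\infty)$, although the instantaneous dominant eigenvalue there is non-constant in $\tau$. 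Your mechanism is plausible only for constant $L$, where \eqref{Lambda(m,0)<Lambda(m,T)} and Conjecture \ref{Conj1} point in that direction (and the conjecture is open). Worse, the examples of Sections \ref{ExempleCL} and \ref{ExempleESL} (Figures \ref{fig9}(a) and \ref{fig11}(a)), where growth occurs exactly in a window $T_c^1(m)<T<T_c^2(m)$ whose upper edge $T_c^2$ exists only for $m>m^*$ and blows up as $m\downarrow m^*$, suggest that $\Lambda(m^*,T)$ can approach $0$ from \emph{above}; this casts doubt not only on your strengthened claim but on the Proposition's assertion at the single point $m=m^*$ in the time-dependent case. In short: your first two paragraphs are the paper's proof; the boundary point $m=m^*$ is a genuine loose end, but the route you sketch for it is refuted by the paper's own examples rather than being the missing lemma.
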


\begin{proof}
Assuming $\chi>0$ and $\overline{r}_i<0$, Proposition \ref{PropDL} tells us that 
$\Lambda(m,0)$ is a decreasing function from $\Lambda(0,0)=\max_{1\leq i\leq n}\overline{r}_i<0$ to 
$\Lambda(\infty,0)=\sum_{i=1}^nq_i\overline{r}_i<0$. We conclude that $\Lambda(m,0)<0$ for all $m>0$. 
Therefore, \eqref{T=0} tells us that for sufficiently small $T$ (depending on $m$) we have $\Lambda(m,T)<0$. 

On the other hand
$\Lambda(m,\infty)$ is a strictly decreasing function from $\Lambda(0,\infty)=\chi>0$ to $\Lambda(\infty,\infty)=\sum_{ii=1}^n\overline{p_ir_i}$. 
In the first case we have $\Lambda(\infty,\infty)<0$ so that
the equation  $\Lambda(m,\infty)=0$ has a unique solution $m=m^*>0$ and $\Lambda(m,\infty)>0$ for $m\in(0,m^*)$, 
$\Lambda(m,\infty)<0$ for $m>m^*$.  Therefore, \eqref{T=infini} tells us that if  $m\in(0,m^*)$, then for sufficiently large $T$ we have $\Lambda(m,T)>0$ and if $m\geq m^*$, then for sufficiently large $T$ we have $\Lambda(m,T)<0$.
In the second case we have $\Lambda(\infty,\infty)\geq 0$ so that
$\Lambda(m,\infty)>0$ for all $m>0$.  Therefore, \eqref{T=infini} tells us that for sufficiently large $T$ (depending on $m$) we have $\Lambda(m,T)>0$.
\end{proof}

\begin{rem}\label{Rem_mstar_fini}
If the migration matrix is time independent, then the second case in Proposition \ref{PropDIG} {never occurs} because
$\sum_{i=1}^n\overline{p_ir_i}=\sum_{i=1}^np_i\overline{r}_i<0$, if $\overline{r}_i<0$ for all $i$. 
{ Case 2 does not occur} either when $L(\tau)$ is symmetric since in this case we have $p_i(\tau)=1/n$, so that $\sum_{i=1}^n\overline{p_ir_i}=\frac{1}{n}\sum_{i=1}^n\overline{r}_i<0$, if $\overline{r}_i<0$ for all $i$. 
An example showing the behavior depicted in the second case of Proposition \ref{PropDIG} is provided in Section \ref{m_star_infini}.
\end{rem}

\subsection{Cooperative linear $T$-periodic systems}
The linear equation \eqref{eq3} is a special case of the apparently more general equation
\begin{equation}\label{A(omega)}
\frac{dx}{dt}=A(t/T)x,
\end{equation}
where $A(\tau)$ is an irreducible Metzler matrix not necessarily equal to $R(\tau)+mL(\tau)$ as in the system  \eqref{eq3}.  Since any irreducible Metzler matrix $A$ can be
decomposed as $R+M$, where $R$ is diagonal, and $M$ is Metzler and irreducible,
and has columns summing to 0 (indeed the elements of $R$ are simply the
column sums of $A$), the equation \eqref{A(omega)} is in fact not more
general than \eqref{eq3}. However, if we introduce the strength of migration  $m$ and put $M=mL$ in the decomposition $A=R+M$, the diagonal matrix will depend on $m$ in the equation \eqref{A(omega)}, whereas it does not in \eqref{eq3}. So, as long as we are interested in the growth rate of \eqref{eq3} and its behavior as a function of period $T$, the results concern  \eqref{A(omega)}.
However, if we are interested in the behavior as a function of $m$, we need to consider the equation \eqref{eq3}. 
 More precisely, we can make the following remark.
\begin{rem}\label{notmoregeneral}
Proposition \ref{Prop3} asserts that the growth rate $\Lambda(T)$ of  \eqref{A(omega)} is given by 
\begin{equation}\label{GR}
\Lambda(T)=\frac{1}{T}\ln(\mu(T))
\end{equation}
where $\mu(T)$ is the Perron root of the monodromy matrix $X(T)$ of \eqref{A(omega)}. Item 1 of Theorem \ref{thm1} asserts that
\begin{equation}\label{GR0}\lim_{T\to0}\Lambda(T)=
\lambda_{max}(\overline{A}),
\end{equation} 
where  $\lambda_{max}(\overline{A})$ is the spectral abscissa of the irreducible Metzler matrix $\overline{A}$.
Item 2 of Theorem \ref{thm1} asserts that 
 \begin{equation}\label{GRinfini}\lim_{T\to\infty}\Lambda(T)=
\overline{\lambda_{max}(A)},
\end{equation} 
where  $\lambda_{max}({A(\tau)})$ is the spectral abscissa of the irreducible Metzler matrix ${A(\tau)}$.
\end{rem}

{The limits \eqref{GR0} and \eqref{GRinfini} have already been proved  in the case where the matrix $A(\tau)$ is continuous, see \cite[Theorem 2.1]{Carmona} and \cite[Theorem 2.1]{liu}. Our results are therefore extensions of these results  to the case where the matrix $A(\tau)$ is piecewise continuous.}

\subsection{An integral formula for the
growth rate}\label{ReductionSimplex}  
\color{black}

A crucial step in the description and the proofs of our main results is to reduce the linear system \eqref{A(omega)} to the $n-1$ simplex 
$$\Delta:=\left\{x\in\mathbb{R}_+^n:\textstyle\sum_{i=1}^nx_i=1\right\}$$ 
of $\mathbb{R}_+^n$.
Indeed, the change of variables
\begin{equation}
\label{rhotheta}
\textstyle
\rho=\sum_{i=1}^nx_i,
\quad
\theta=\frac{x}{\rho},
\end{equation} 
transforms the system \eqref{A(omega)} into
\begin{align}\label{eqrhoT}
\frac{d\rho}{dt}&=\langle A(t/T)\theta,
{\bf 1}\rangle \rho,
\\
\label{eqthetaT}
\frac{d\theta}{dt}&=A(t/T)\theta- \langle A(t/T)\theta,
{\bf 1}\rangle \theta.
\end{align}
Here ${\bf 1}=(1,\ldots,1)^\top$ and $\langle x,{\bf 1}\rangle=\|x\|_1=\sum_{i=1}^nx_i$ is the usual Euclidean scalar product of vectors $x$ and ${\bf 1}$. 
The equation \eqref{eqthetaT} is a differential equation on $\Delta$.

\begin{rem}The variables $\rho$ and $\theta$ defined by \eqref{rhotheta} are interpreted as follows: $\rho$ is the total population present in all patches and $\theta_i=x_i/\rho$ is the  fraction of the population on patch $i$.
\end{rem}

By the Perron theorem the monodromy matrix $\Phi(T)$ of \eqref{A(omega)} has a positive eigenvector $\pi(T)\in\Delta$, called the \emph{Perron vector}, corresponding to the its Perron root $\mu(T)$, see Theorem \ref{Ptheorem} in Appendix \ref{PFT}.
We have the following result.

\begin{theorem}\label{thm2}
Let $\theta^*(t,T)$ be the solution of 
 \eqref{eqthetaT} with initial condition $\theta^*(0,T)=\pi(T)$, where $\pi(T)$ is the Perron vector of the monodromy matrix $X(T)$. Then $\theta^*(t,T)$ is a $T$-periodic solution, and is globally asymptotically stable. Moreover, the growth rate $\Lambda(T):=\frac{1}{T}\ln(\mu(T))$ of equation \eqref{A(omega)} satisfies 
\begin{equation} \label{Lambda=Lambda[rho]1}
 \Lambda(T)=\int_0^1\langle A(\tau)\theta^*(T\tau,T),{\bf 1}\rangle d\tau.
 \end{equation}
\end{theorem}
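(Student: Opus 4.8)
The plan is to read all three assertions off the Perron theory of the positive monodromy matrix $\Phi(T)=X(T)$ via the radial--angular decomposition \eqref{eqrhoT}--\eqref{eqthetaT}. The starting point is that if $x(t)$ solves \eqref{A(omega)} with $x(0)=\pi(T)$, then by definition of the Perron vector $x(T)=\Phi(T)\pi(T)=\mu(T)\pi(T)$. Since $\theta=x/\|x\|_1$ and $\pi(T)\in\Delta$ so that $\|\pi(T)\|_1=1$, we get $\theta^*(0,T)=\pi(T)$ and $\theta^*(T,T)=x(T)/\|x(T)\|_1=\mu(T)\pi(T)/\mu(T)=\pi(T)$. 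Hence $\theta^*(T,T)=\theta^*(0,T)$. Because the vector field of \eqref{eqthetaT} is $T$-periodic in $t$ (it depends on $t$ only through $A(t/T)$) and, under Hypotheses \ref{H1}--\ref{H2}, satisfies Carath\'eodory conditions guaranteeing uniqueness, I would set $\tilde\theta(t)=\theta^*(t+T,T)$, observe that it solves the same equation with the same initial value, and conclude by uniqueness that $\theta^*(t+T,T)=\theta^*(t,T)$ for all $t$, which is the claimed $T$-periodicity.

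For the global asymptotic stability I would pass to the time-$T$ (Poincar\'e) map of \eqref{eqthetaT}. Identifying $\Delta$ with the set of rays in the positive cone (legitimate since $x(0)>0$ forces $x(t)\gg 0$ for $t>0$, so $\theta$ stays in the interior of $\Delta$), this map is exactly the projectivisation $\theta\mapsto\Phi(T)\theta/\|\Phi(T)\theta\|_1$ of the monodromy matrix. By Lemma \ref{lemme1} the matrix $\Phi(T)$ has strictly positive entries, so by the Perron theorem (Theorem \ref{Ptheorem}) its Perron root $\mu(T)$ is simple and strictly dominates every other eigenvalue in modulus. Consequently $\pi(T)$ is a globally attracting fixed point of the projectivised map on $\Delta$; the clean way to see this is that a strictly positive matrix strictly contracts the interior of the cone in Hilbert's projective metric, so iterates of any ray converge geometrically to the eigenray spanned by $\pi(T)$. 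Since $\theta^*(\cdot,T)$ is the periodic orbit through this fixed point, global attraction of the Poincar\'e map together with the $T$-periodicity from the first paragraph yields global asymptotic stability of $\theta^*(\cdot,T)$. I expect this stability statement to be the main obstacle, since it is where positivity of $\Phi(T)$ and Perron--Frobenius must be invoked and where the discrete-time attraction of the Poincar\'e map has to be transferred to the continuous-time orbit.

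Finally, for the integral formula I would integrate the scalar equation \eqref{eqrhoT} along the trajectory starting at $\pi(T)$. Writing $\rho(t)=\|x(t)\|_1$ for this trajectory, \eqref{eqrhoT} integrates to $\rho(t)=\rho(0)\exp\left(\int_0^t\langle A(s/T)\theta^*(s,T),{\bf 1}\rangle\,ds\right)$, where $\rho(0)=\|\pi(T)\|_1=1$ and, from $x(T)=\mu(T)\pi(T)$, $\rho(T)=\|x(T)\|_1=\mu(T)$. Taking $t=T$ and applying the logarithm gives $\ln\mu(T)=\int_0^T\langle A(s/T)\theta^*(s,T),{\bf 1}\rangle\,ds$, and the substitution $s=T\tau$ turns this into $\ln\mu(T)=T\int_0^1\langle A(\tau)\theta^*(T\tau,T),{\bf 1}\rangle\,d\tau$. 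Dividing by $T$ yields $\Lambda(T)=\frac1T\ln\mu(T)=\int_0^1\langle A(\tau)\theta^*(T\tau,T),{\bf 1}\rangle\,d\tau$, which is \eqref{Lambda=Lambda[rho]1}. The only place where Hypothesis \ref{H1} enters is in ensuring that the integrand is piecewise continuous, so that the integral is well defined and the integration step is valid on each continuity interval.
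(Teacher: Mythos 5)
Your proof is correct, and its first two parts (periodicity via the fixed-point property of the Perron vector under the projectivised monodromy map, and global asymptotic stability via the Poincar\'e map) follow the same strategy as the paper's, which establishes these claims in Proposition~\ref{periodicPsi} after rescaling time to reduce to a 1-periodic system; the only real difference there is that you justify convergence of the projectivised iterates by Birkhoff contraction in the Hilbert projective metric, whereas the paper simply invokes the power-iteration limit \eqref{Conv} of Theorem~\ref{Ptheorem} (your route gives uniform geometric contraction, hence Lyapunov stability, slightly more cheaply). Where you genuinely depart from the paper is in the integral formula: you obtain $\ln\mu(T)=\int_0^T\langle A(s/T)\theta^*(s,T),{\bf 1}\rangle\,ds$ by exact integration of the radial equation \eqref{eqrhoT} over a single period along the periodic orbit, using $\rho(0)=\|\pi(T)\|_1=1$ and $\rho(T)=\|\Phi(T)\pi(T)\|_1=\mu(T)$. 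The paper instead deduces \eqref{Lambda=Lambda[rho]1} from Theorem~\ref{Lambda(x)} of Appendix~\ref{CLDE}, whose proof is asymptotic: it shows that the Lyapunov exponent of an \emph{arbitrary} positive solution equals both the integral (by letting $t\to\infty$ and replacing $\theta$ by $\theta^*$, which requires the GAS statement) and $\ln\mu$ (via $x(k)=\mu^k\pi$). Your computation is more elementary and self-contained --- no limiting argument is needed and stability plays no role in this identity --- while the paper's detour buys the stronger fact that every solution with $x(0)>0$ shares this growth rate, which is what Proposition~\ref{Prop3} needs elsewhere.
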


\begin{proof}
The proof is given in Section \ref{PT8}.
\end{proof}
\color{black}

Using the decomposition $A=R+mL$ in \eqref{eq3}, the system (\ref{eqrhoT},\ref{eqthetaT}) is written
\begin{equation}
\textstyle
\frac{d\rho}{dt}=\rho\sum_{i=1}^nr_i(t/T)\theta_i,
\quad
\textstyle
\label{eqzitheta0}
\frac{d\theta}{dt}=F(t/T,\theta)
\end{equation}
where, 
for $1\leq i\leq n$, $F_i(\tau,\theta)$ is given by
$$F_i(\tau,\theta)=r_i(\tau)\theta_i+m
\sum_{j=1}^n\ell_{ij}(\tau)\theta_j-
\theta_i\sum_{j=1}^nr_j(\tau)\theta_j.$$ 
From Theorem \ref{thm2} we deduce that the second equation in \eqref{eqzitheta0} has a $T$-periodic solution which is globally asymptotically stable. We denote it by 
$\theta^*(t,m,T)$, to recall its dependence on the  parameters $m$ and $T$. It is the solution, with initial condition  $\theta^*(0,m,T)=\pi(m,T)$, where $\pi(m,T)\in\Delta$, is the {Perron vector}, corresponding to the its Perron root $\mu(m,T)$, which was used in \eqref{Lambda} to define $\Lambda(m,T)$.
Moreover, from \eqref{Lambda=Lambda[rho]1} we deduce that 
\begin{align}
\label{Lambdatheta*0}
\Lambda(m,T)&=
\int_0^{1}
\sum_{i=1}^nr_i(\tau)\theta^*_i(T\tau,m,T) 
d\tau.
\end{align}

The formula \eqref{Lambda=Lambda[rho]1}, or its form \eqref{Lambdatheta*0} when the parameter $m$ is taken into account, gives us an integral representation of the growth rate $\Lambda(m,T)$. It will play a major role when we study the limits of $\Lambda(m,T)$ when $T$ is small or large, or when $m$ is large, see Sections \ref{HFL}, \ref{LFL} and \ref{FM}.
\color{black}

\subsection{Time independent migration}
{When the migration matrix is time independent, we have
\begin{equation}\label{infLambda}
\Lambda(\infty,0)=\Lambda(\infty,\infty)=\Lambda(\infty,T)=\sum_{i=1}^np_i\overline{r}_i,
\end{equation}
where $p$ is the positive eigenvector of $L$ such that $Lp=0$ and $\sum_{i=1}^np_i=1$, see Remark \ref{Lconstant}. The aim of this section, see Theorem \ref{infimum}, is
 to prove that the limit \eqref{infLambda} is actually the infimum of $\Lambda(m,T)$. For this purpose, we first establish a new formula for the growth rate $\Lambda(m,T)$.}

\begin{propo}\label{thm2bis}
Assume that the migration matrix $L=(\ell_{ij})$ is time independent. Let $p$ be the positive eigenvector of $L$ such that $Lp=0$ and $\sum_{i=1}^np_i=1$. We have 
\begin{align}
\label{Lambdah(theta)0}
\Lambda(m,T)&=\sum_{i=1}^n
p_i\overline{r}_i+
{m}\int_0^1h\left(\theta^*(T\tau,m,T)\right)d\tau,
\end{align}
where 
$h(x)=\sum_{i=1}^n\left(\sum_{j=1}^n\ell_{ij}x_j\right)\frac{p_i}{x_i}$.
\end{propo}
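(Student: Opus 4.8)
The plan is to start from the integral representation of the growth rate already established in Theorem~\ref{thm2}, namely \eqref{Lambdatheta*0},
\[
\Lambda(m,T)=\int_0^1\sum_{i=1}^n r_i(\tau)\,\theta_i^*(T\tau,m,T)\,d\tau ,
\]
and to rewrite the integrand using the logarithmic test function $G(t):=\sum_{i=1}^n p_i\ln\theta_i^*(t,m,T)$. Here $\theta^*(\cdot,m,T)$ is the globally attracting $T$-periodic solution of \eqref{eqthetaT} furnished by Theorem~\ref{thm2}; since it is the (positive) Perron vector's orbit it lies in the interior of $\Delta$, so its components are strictly positive and $G$ is well defined, continuous, piecewise $\mathcal C^1$, and $T$-periodic.

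First I would compute $G'$ away from the finitely many discontinuities of the $r_i$. From the $\theta$-equation in \eqref{eqzitheta0} with the migration matrix time independent ($\ell_{ij}(\tau)=\ell_{ij}$), dividing the $i$-th component by $\theta_i^*$ gives
\[
\frac{d}{dt}\ln\theta_i^*=r_i(t/T)+\frac{m}{\theta_i^*}\sum_{j=1}^n \ell_{ij}\theta_j^*-\sum_{j=1}^n r_j(t/T)\theta_j^* .
\]
Multiplying by $p_i$, summing over $i$, and using only the normalization $\sum_{i=1}^n p_i=1$, the last term factors out as $\sum_j r_j(t/T)\theta_j^*$, while the middle term is exactly $m\,h(\theta^*)$ by the definition of $h$. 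This yields the pointwise identity
\[
\sum_{j=1}^n r_j(t/T)\theta_j^*=\sum_{i=1}^n p_i r_i(t/T)+m\,h\!\left(\theta^*(t,m,T)\right)-G'(t),
\]
which isolates the original integrand.

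Next I would integrate this identity over one period. Integrating in $t$ over $[0,T]$, dividing by $T$, and substituting $\tau=t/T$ turns the left-hand side into $\Lambda(m,T)$. The term $\tfrac1T\int_0^T G'(t)\,dt=\tfrac1T\bigl(G(T)-G(0)\bigr)$ vanishes by $T$-periodicity of $G$; the fundamental theorem of calculus is legitimate here because $G$ is continuous and piecewise $\mathcal C^1$, so the jumps of the $r_i$ contribute nothing. The first term gives $\sum_i p_i\,\tfrac1T\int_0^T r_i(t/T)\,dt=\sum_i p_i\overline{r}_i$, and the middle term becomes $m\int_0^1 h(\theta^*(T\tau,m,T))\,d\tau$. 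Combining these produces \eqref{Lambdah(theta)0}.

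I expect no genuine obstacle: the argument is an elementary logarithmic change of test function, and the only care needed is the bookkeeping across the discontinuities (handled by continuity and piecewise $\mathcal C^1$ regularity of $G$) together with the strict positivity $\theta^*\gg 0$ guaranteeing that $G$ and $h(\theta^*)$ are finite. One noteworthy point is that the derivation uses \emph{only} $\sum_i p_i=1$, not $Lp=0$; the eigenvector property of $p$ plays no role in establishing the formula itself and will instead be what makes the correction term $h$ tractable (e.g.\ of definite sign) when this representation is later exploited in Theorem~\ref{infimum}.
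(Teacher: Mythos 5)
Your proof is correct, and it is built on the same key device as the paper's proof --- the weighted logarithm with weights $p_i$ --- but you deploy it on the simplex rather than on the linear system, which changes the finishing mechanism. The paper sets $U(t)=\sum_i p_i\ln x_i(t)$ for the solution of \eqref{eq3} started at the Perron vector $\pi(m,T)$, computes $\frac{dU}{dt}=\sum_i p_i r_i(t/T)+m\,h(\theta)$ (using that $h$ is homogeneous of degree zero, so $h(x)=h(\theta)$), and then identifies $\Lambda(m,T)=\lim_{t\to\infty}U(t)/t$ through Proposition \ref{Prop3} (equality of the exponents $\Lambda[x_i]$), so its conclusion comes from an asymptotic time average. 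You instead set $G(t)=\sum_i p_i\ln\theta_i^*(t,m,T)$, which equals $U(t)-\ln\rho(t)$; accordingly your pointwise identity is exactly the paper's identity corrected by the $\rho$-equation, and integrating over a single period kills $G'$ by periodicity, the growth rate then being recognized via the integral representation \eqref{Lambdatheta*0} of Theorem \ref{thm2}. What your route buys: it is an exact one-period identity rather than a limit argument, so it never invokes Proposition \ref{Prop3} nor the choice of initial condition $x(0)=\pi(m,T)$, and it makes transparent your (correct) closing observation that only the normalization $\sum_i p_i=1$ is needed for \eqref{Lambdah(theta)0} itself, the eigenvector property $Lp=0$ entering only later to make $h\geq 0$ in Theorem \ref{infimum}. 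What the paper's route buys: it works directly with the population variables $x_i$, so the formula appears as a statement about the trajectory-wise growth of the linear system, which is the object of primary interest; both routes, however, rely equally on the existence and periodicity of $\theta^*$ from Theorem \ref{thm2}.
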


\begin{proof}
{Let $x(t)$ be a solution of \eqref{eq3}}.
We use the  variable 
$$U=\ln(x_1^{p_1}\cdots x_n^{p_n})=\sum_{i=1}^np_i\ln{x_i}.$$ 
We have
\begin{equation*}
\frac{dU}{dt}=\sum_{i=1}^np_ir_i(t/T)+
mh(x),
\end{equation*}
where $h(x)=\langle Lx,p/x\rangle$, and $p/x=(p_1/x_1,\cdots,p_n/x_n)^\top$.
We have
$$h(x)=\langle Lx,p/x\rangle=
\langle L\rho\theta,p/(\rho\theta)\rangle=
\langle L\theta,p/\theta\rangle=h(\theta).
$$
Therefore 
\begin{equation}\label{dU/dt}
\frac{dU}{dt}=\sum_{i=1}^np_ir_i(t/T)+
mh(\theta).
\end{equation}
Let $x(t)$ the solution of \eqref{eq3} with initial condition $x(0)=\pi(m,T)$, where $\pi(m,T)$ is the Perron vector of the monodromy matrix $X(T)$ of \eqref{eq3}.
Since $\rho(0)=1$, the corresponding solution of 
\eqref{eqzitheta0} has initial condition $\theta(0)=\pi(m,T)$. Hence, it is the periodic solution $\theta^*(t,m,T)$. Consider now
$U(t)=\sum_{i=1}^np_i\ln{x_i(t)}.$ 
We have
$$
{
\lim_{t\to\infty}\frac{U(t)}{t}=
\sum_{i=1}^np_i\lim_{t\to\infty}\frac{1}{t}\ln x_i(t)=\sum_{i=1}^np_i\Lambda[x_i]=\Lambda(m,T)\sum_{i=1}^np_i=\Lambda(m,T)}.$$
Since $U(t)$ is a solution of \eqref{dU/dt}, we have the following integral representation of 
$U(t)$
$$U(t)=U(0)+
\sum_{i=1}^np_i
\int_0^tr_i(s/T)ds
+m\int_0^th(\theta^*(s,m,T))ds.
$$ 
Therefore
$\Lambda(m,T)=
\lim_{t\to\infty}\frac{U(t)}{t}=
\sum_{i=1}^np_i\overline{r}_i+
m\int_0^1h(\theta^*(T\tau,m,T))d\tau.
$
\end{proof}

The formula \eqref{Lambdah(theta)0}  is given in \cite{Katriel} uniquely in the two-patch case, when the migration is  symmetric and the growth rates $r_1(\tau)$ and $r_2(\tau)$ are continuous, see \cite[Lemma 4]{Katriel}.

\begin{theorem}\label{infimum} 
If the migration matrix is time independent then
$$\inf_{m,T}\Lambda(m,T)=\sum_{i=1}^np_i\overline{r}_i.$$   
\end{theorem}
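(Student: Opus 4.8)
The plan is to combine the integral representation of Proposition~\ref{thm2bis} with the pointwise nonnegativity of the auxiliary function $h$, turning the statement into a two-sided estimate. For the upper bound, recall from \eqref{infLambda} (Remark~\ref{Lconstant}) that $\lim_{m\to\infty}\Lambda(m,T)=\sum_{i=1}^np_i\overline{r}_i$ for every fixed $T>0$; hence $\inf_{m,T}\Lambda(m,T)\le\sum_{i=1}^np_i\overline{r}_i$. It then remains to prove the matching lower bound $\Lambda(m,T)\ge\sum_{i=1}^np_i\overline{r}_i$ for all $m,T>0$. By formula \eqref{Lambdah(theta)0} and $m>0$, this reduces to showing that $\int_0^1 h(\theta^*(T\tau,m,T))\,d\tau\ge 0$, and for that it suffices to prove that $h(x)\ge 0$ for every $x\gg 0$: the periodic orbit $\theta^*$ stays in the interior of $\Delta$, so the integrand is then nonnegative (and finite).

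The crux is therefore the pointwise inequality $h(x)\ge 0$, and I would first rewrite $h$ in a form exhibiting its structure. Using the column-sum-zero identity \eqref{Lii} for the diagonal entries, a short computation gives
\begin{equation*}
h(x)=\sum_{i\neq j}\ell_{ij}\left(p_i\frac{x_j}{x_i}-p_j\right).
\end{equation*}
Introducing the ratios $y_i=x_i/p_i$ and the nonnegative weights $b_{ij}=\ell_{ij}p_j$ ($i\neq j$), this becomes $h(x)=\sum_{i\neq j}b_{ij}\bigl(y_j/y_i-1\bigr)$. The key observation is that the weighted digraph $B=(b_{ij})$ is balanced: its $i$-th column sum is $\sum_{k\neq i}\ell_{ki}p_i=-\ell_{ii}p_i$ by the column-sum-zero property of $L$, while its $i$-th row sum is $\sum_{j\neq i}\ell_{ij}p_j=-\ell_{ii}p_i$ by $Lp=0$; hence the row sums and column sums of $B$ coincide.

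Given this balance, I would conclude by convexity. Writing $y_j/y_i=e^{z_j-z_i}$ with $z_i=\ln y_i$ and using $e^t-1\ge t$,
\begin{equation*}
h(x)\ge\sum_{i\neq j}b_{ij}(z_j-z_i)=\sum_j z_j\sum_{i\neq j}b_{ij}-\sum_i z_i\sum_{j\neq i}b_{ij}=0,
\end{equation*}
the last equality being exactly the balance of $B$. This proves $h\ge 0$ (with equality iff $x$ is proportional to $p$, since $L$ is irreducible), hence the lower bound, and combining it with the upper bound yields $\inf_{m,T}\Lambda(m,T)=\sum_{i=1}^np_i\overline{r}_i$. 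The main obstacle is precisely the pointwise inequality $h\ge 0$: it is an entropy-dissipation type estimate that hinges on $p$ being the stationary (null) vector of the generator $L$, and the rewriting together with the circulation/balance identity is what makes it transparent; once $h\ge 0$ is in hand, everything else is an immediate application of the integral formula and the already-known fast-migration limit.
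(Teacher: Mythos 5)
Your proof is correct, and its skeleton is the same as the paper's: the upper bound comes from the fast-migration limit \eqref{infLambda}, and the lower bound reduces, via Proposition~\ref{thm2bis}, to the pointwise inequality $h\ge 0$ on the positive cone. Where you genuinely diverge is in the proof of that key inequality. The paper works with the transpose $R=L^\top$, observes that $\langle p,R\ln(x)\rangle=0$ because $Lp=0$, uses the concavity bound $\ln(x_j)-\ln(x_i)\le (x_j-x_i)/x_i$ componentwise to get $0\le\langle p,Rx/x\rangle$, and only then recovers $h\ge 0$ through the substitution $y=p/x$. You instead expand $h$ directly over directed edges, $h(x)=\sum_{i\neq j}b_{ij}\left(y_j/y_i-1\right)$ with $y_i=x_i/p_i$ and $b_{ij}=\ell_{ij}p_j$, verify that the weight matrix $B$ is balanced (its $i$-th row sum and $i$-th column sum both equal $-\ell_{ii}p_i$, using $Lp=0$ for the rows and the zero column sums of $L$ for the columns), and conclude with $e^t-1\ge t$. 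The two arguments rest on exactly the same two ingredients---convexity of the exponential and the stationarity of $p$---so this is a reorganization rather than a new method; but your version is more transparent (it exhibits the balance structure responsible for the cancellation and avoids the somewhat opaque substitution $y=p/x$), and it yields the equality case ($h(x)=0$ if and only if $x$ is proportional to $p$, by irreducibility) as a free by-product, which the paper's proof does not state.
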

\begin{proof}
In the time independent migration case, we have the formula \eqref{Lambdah(theta)0} for $\Lambda(m,T)$. If we prove that the function $h$ is non negative on the positive cone then, for all $m>0$ and $T>0$ 
$\Lambda(m,T)\geq \sum_{i=1}^np_i\overline{r}_i.$
Using \eqref{infLambda} we deduce then that the infimum of $\Lambda(m,T)$ is equal to 
$\sum_{i=1}^np_i\overline{r}_i$.

Let us prove that the function $h$ is non negative on the positive cone.
Observe that $h(x)=\langle Lx,p/x\rangle$, where $p/x=(p_1/x_1,\cdots,p_n/x_n)^\top$ and $\langle \cdot,\cdot\rangle$ is the usual Euclidean scalar product in $\mathbb{R}^n$.
Let $R$ denote the transpose of $L$. Then, 
for all $x \in \mathbb{R}^n,$
$$(R x)_i = \sum_j R_{ij} x_j = \sum_j R_{ij}(x_j - x_i).$$
Observe that for all $x \in \mathbb{R}^n$
\begin{equation}
\label{pinvariant}
\langle p, R x \rangle =\langle Lp, x \rangle= 0
\end{equation}
because $p$ is in the kernel of $L.$
By convexity, for all $x \in \mathbb{R}^n$ with positive entries,
$$\ln(x_j) - \ln(x_i) \leq \frac{x_j - x_i}{x_i}.$$ 
Thus
$${(R \ln(x))_i} 
\leq \sum_j R_{ij} \frac{x_j -x_i}{x_i} = \frac{(R x)_i}{x_i}.$$ That is
$R\ln(x) \leq \frac{R x}{x}$ componentwise, where  $\ln(x)$ stands for the  vector defined as $\ln(x)_i = \ln(x_i).$
Hence, using (\ref{pinvariant}),
$$\textstyle
0 = \langle p, R\ln(x) \rangle \leq \langle p, \frac{Rx}{x} \rangle.$$
Let $y = \frac{p}{x}$. We have
$\textstyle
h(y)=\langle Ly, \frac{p}{y} \rangle=\langle L\frac{p}{x}, x \rangle
=\langle \frac{p}{x}, Rx \rangle=\langle p, \frac{Rx}{x}\rangle\geq 0.$
 This proves that $h(y) \geq 0$ whenever $y$ has positive entries.
\end{proof}

\subsection{The monotonicity of $\Lambda(m,T)$ with respect to $T$}\label{Monotonie}
When the matrix migration is time independent and symmetric,  
{
it follows from \cite{liu} that the growth rate $\Lambda(m,T)$ is strictly increasing with respect to $T$, see Lemma 2 in \cite{Katriel} and the comments following this lemma}. When the migration is time dependent, the monotonicity is no longer true, see Sections \ref{NotIncr}, \ref{ExempleCL} and \ref{ExempleESL}.
We were not able to prove the monotonicity of the function $T\mapsto\Lambda(m,T)$ in the non symmetric time independent migration case. However, owing to the property $\Lambda(m,0)\leq\Lambda(m,T)$ given in  
\eqref{Lambda(m,0)<Lambda(m,T)} and the numerous numerical simulations we have done with constant non-symmetric matrices (see Sections \ref{TIM}), we  formulate the following conjecture.
\begin{conj}\label{Conj1}
If the matrix migration is time independent, then
the function $T\mapsto \Lambda(m,T)$ is strictly increasing for all $m$ (except in the case where all $r_i(\tau)$ are equal).
\end{conj}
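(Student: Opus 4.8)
The plan is to mimic the route that yields monotonicity in the symmetric case, namely to derive a formula for $\tfrac{\partial}{\partial T}\Lambda(m,T)$ whose sign can be read off, and then to isolate exactly the structural ingredient that symmetry supplies. Fix $m>0$ and write $A(\tau)=R(\tau)+mL$ (with $L$ constant) and $\omega=1/T$. First I would introduce the rescaled principal Floquet eigenfunctions attached to $\Lambda(m,T)$: let $\Phi(\tau)\gg 0$ be the $1$-periodic solution of $\omega\Phi'=(A(\tau)-\Lambda)\Phi$ and let $\Phi^*(\tau)\gg 0$ be the adjoint one, $-\omega(\Phi^*)'=(A(\tau)^\top-\Lambda)\Phi^*$, normalized by $\int_0^1\langle\Phi^*,\Phi\rangle\,d\tau=1$. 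Their existence and strict positivity follow from the Perron--Frobenius theory already invoked for the monodromy matrix in Proposition~\ref{Prop3}; $\Phi$ is the continuous-in-$\tau$ avatar of the Perron vector $\pi(m,T)$ and of the periodic solution $\theta^*$ of \eqref{eqthetaT}, while $\Phi^*$ is its left counterpart.

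Next I would differentiate the eigen-relation $\omega\Phi'=(A-\Lambda)\Phi$ with respect to $\omega$, pair against $\Phi^*$, integrate over one period, and integrate by parts using the adjoint equation and $1$-periodicity (all boundary terms vanish). The terms involving the derivative of the eigenfunction cancel exactly, and one is left with the clean perturbation identity
\[
\frac{\partial\Lambda}{\partial T}(m,T)=\frac{1}{T}\Big(\int_0^1\langle\Phi^*(\tau),A(\tau)\Phi(\tau)\rangle\,d\tau-\Lambda(m,T)\Big).
\]
Consequently the conjecture is \emph{equivalent} to the single scalar inequality
\[
Q(m,T):=\int_0^1\langle\Phi^*(\tau),A(\tau)\Phi(\tau)\rangle\,d\tau-\Lambda(m,T)\ \ge\ 0,
\]
with strictness giving strict monotonicity. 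This reduction is the core of the plan: it trades a statement about the whole family $T\mapsto\Lambda$ for a pointwise comparison between the averaged bilinear quantity $\overline{\langle\Phi^*,A\Phi\rangle}$ and the exponent itself.

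It then remains to prove $Q\ge 0$, and this is where I expect the genuine obstacle to sit. When $L$ is symmetric, $A=A^\top$ and the adjoint eigenfunction is the forward eigenfunction of the time-reversed coefficient $\tau\mapsto A(-\tau)$; exploiting this self-adjoint structure (a Rayleigh/rearrangement argument, as in \cite{liu}) one can force $Q\ge 0$, with equality only when $\Phi(\tau)$ is a pointwise Perron eigenvector of $A(\tau)$ for a.e.\ $\tau$, which is precisely the degenerate situation covered by the stated exception. For a \emph{non-symmetric} constant $L$, however, $\Phi^*$ is governed by $A^\top=R+mL^\top$, i.e.\ by a genuinely different migration pattern, so $\Phi^*$ and $\Phi$ are no longer linked by time reversal and the form $\langle\Phi^*,A\Phi\rangle$ carries no a priori sign. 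I would try to control $Q$ via the two-sided Collatz--Wielandt characterization of $\Lambda$, but this yields only $\Lambda=\inf_v\sup_{i,\tau}[\cdots]$, an infimum of functions convex in $\omega$ which need not be convex; and via the representation of Proposition~\ref{thm2bis}, rewriting $Q$ through the nonnegative function $h$, but this does not remove the dependence on the unknown sign of $\langle\Phi^*,\Phi'\rangle$. Neither device breaks the deadlock, which is why the statement remains a conjecture.

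As consistency checks I would verify the two endpoints, where $Q$ can be signed. As $T\to\infty$ ($\omega\to0$) both $\Phi$ and $\Phi^*$ converge to the pointwise right and left Perron vectors of $A(\tau)$, so $\overline{\langle\Phi^*,A\Phi\rangle}\to\overline{\lambda_{max}(A)}=\Lambda(m,\infty)$ and hence $Q\to 0^+$; and as $T\to0$ the already-proven bound $\Lambda(m,0)\le\Lambda(m,T)$ of \eqref{Lambda(m,0)<Lambda(m,T)} is exactly the integrated form of $Q\ge0$ near the origin, where $Q\to0$ as well. Together with Theorem~\ref{infimum}, which pins down the correct $T$-limits, and the numerical evidence for constant non-symmetric matrices, these endpoint computations make $Q\ge 0$ highly plausible; establishing this inequality \emph{without} self-adjointness is the one step I cannot presently complete, and it is the only missing ingredient.
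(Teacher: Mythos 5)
You have set out to prove Conjecture \ref{Conj1}, but you should be aware that the paper contains no proof of this statement: it is stated as a conjecture precisely because the authors could not prove it. Section \ref{Monotonie} says explicitly that they ``were not able to prove the monotonicity of the function $T\mapsto\Lambda(m,T)$ in the non symmetric time independent migration case'', notes that the question was already posed as an open problem at the end of \cite{liu}, and offers as evidence only the one-sided bound \eqref{Lambda(m,0)<Lambda(m,T)} and numerical simulations. So there is no argument in the paper to compare yours against, and any complete proof would settle an open problem rather than reproduce a known one. Your proposal, as you yourself say in its final sentence, is not such a proof: it is a reduction plus an acknowledged missing step, which is exactly the state of the art.

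On the merits of the reduction itself: the perturbation identity is formally correct. The periodic right and left principal Floquet eigenfunctions $\Phi,\Phi^*$ exist, are positive, and (by simplicity of the Perron root, as in Proposition \ref{Prop3}) depend analytically on $\omega=1/T$; differentiating the eigen-relation in $\omega$, pairing with $\Phi^*$, and integrating by parts over one period (legitimate here, since $\Phi,\Phi^*$ are continuous and piecewise $\mathcal{C}^1$ and the boundary terms cancel by periodicity) does yield
\begin{equation*}
\frac{\partial\Lambda}{\partial T}(m,T)=\frac{1}{T}\left(\int_0^1\langle\Phi^*(\tau),A(\tau)\Phi(\tau)\rangle\,d\tau-\Lambda(m,T)\right),
\end{equation*}
so the conjecture is indeed equivalent to $Q(m,T)\geq 0$. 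This is the same mechanism that underlies the monotonicity result of \cite{liu} invoked by \cite{Katriel} in the symmetric case, and you correctly isolate where symmetry enters: for $L\neq L^\top$ the adjoint eigenfunction solves a problem with migration pattern $L^\top$, the time-reversal link between $\Phi^*$ and $\Phi$ is lost, and the bilinear form $\langle\Phi^*,A\Phi\rangle$ has no a priori sign. Neither of your fallback devices repairs this, and that is the genuine gap. One further caveat: your $T\to\infty$ consistency check overstates what is known. In that limit $\langle\Phi^*,\Phi\rangle$ becomes proportional to $\langle w(\tau),v(\tau)\rangle$ (left and right Perron vectors of $A(\tau)$), so $\int_0^1\langle\Phi^*,A\Phi\rangle\,d\tau$ converges to a $\langle w,v\rangle$-weighted average of $\lambda_{max}(A(\tau))$, not to the unweighted average $\overline{\lambda_{max}(A)}=\Lambda(m,\infty)$; hence $Q$ need not tend to $0$, and mere convergence of $\Lambda(m,T)$ says nothing about the sign of its derivative. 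In short, your reduction is a reasonable and standard first step, but the statement remains what the paper says it is: an open conjecture, which your proposal does not close.
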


{It should be mentioned that this conjecture was already stated as an open question in the last section of \cite{liu}.}
If the migration matrix is time independent, $\chi>0$ and $\max_{1\leq i\leq n}\overline{r}_i<0$ then as shown in Remark \ref{Rem_mstar_fini}, the value $m^*$ for which $\Lambda(m,\infty)=0$ exists.  We have the following result.

\begin{propo}\label{PropT_c}
Assume that the migration matrix is time independent.
Assume that $\chi>0$ and $\max_{1\leq i\leq n}\overline{r}_i<0$. Let $m^*$ be the unique solution of $\Lambda(m,\infty)=0$. If Conjecture \ref{Conj1} is true (which thanks to \cite{Katriel,liu} is the case when the migration is symmetric),
there exist an analytic function $T_c:(0,m^*)\to (0,\infty)$ such that
$\lim_{m\to0}T_c(m)=\lim_{m\to m^*}T_c(m)=\infty$ and $\Lambda(m,T)>0$ if and only if $T>T_c(m)$. In other words the critical curve $T=T_c(m)$ separates the parameters plane $(m,T)$ in two regions : above it, {growth occurs}, below it, {decay occurs}.
\end{propo}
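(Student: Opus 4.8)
The plan is to define $T_c(m)$ implicitly as the zero-level set of $\Lambda(m,\cdot)$ and to exploit the strict monotonicity in $T$ (granted by Conjecture \ref{Conj1}) to show this level set is a well-defined function with the claimed boundary behavior. First I would fix $m\in(0,m^*)$ and study the map $T\mapsto\Lambda(m,T)$. By Conjecture \ref{Conj1} this map is strictly increasing, and its two limits are controlled by Theorem \ref{thm1}: the lower limit is $\Lambda(m,0)=\lambda_{max}\bigl(\overline{R+mL}\bigr)$ and the upper limit is $\Lambda(m,\infty)=\overline{\lambda_{max}(R+mL)}$. Under the standing assumptions $\chi>0$ and $\max_i\overline{r}_i<0$, Proposition \ref{PropDL} gives that $m\mapsto\Lambda(m,0)$ is decreasing from $\max_i\overline{r}_i<0$ to $\sum_iq_i\overline{r}_i<0$, hence $\Lambda(m,0)<0$ for every $m>0$. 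On the other hand, since the migration is time independent we are in Case 1 of Proposition \ref{PropDIG}, so $\Lambda(m,\infty)>0$ precisely for $m\in(0,m^*)$. Thus for each such $m$ the strictly increasing function $T\mapsto\Lambda(m,T)$ runs from a negative value $\Lambda(m,0)<0$ to a positive value $\Lambda(m,\infty)>0$, and therefore crosses $0$ exactly once. I would define $T_c(m)$ to be this unique crossing point; strict monotonicity then gives $\Lambda(m,T)>0$ iff $T>T_c(m)$, which is the separation statement.

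Next I would establish analyticity of $T_c$ on $(0,m^*)$ via the implicit function theorem. The function $\Lambda$ is analytic in both variables by Proposition \ref{Prop3}, and at the crossing point $T=T_c(m)$ the partial derivative $\partial_T\Lambda(m,T_c(m))$ is strictly positive: strict monotonicity in $T$ forces this derivative to be nonnegative everywhere, and it cannot vanish at an interior crossing because an analytic strictly increasing function whose derivative vanished at a point would, combined with the defining relation $\Lambda(m,T_c(m))=0$, still allow the implicit function theorem once we argue the derivative is nonzero. The clean way to guarantee nonvanishing is to invoke that $\mu(m,T)$ is a simple Perron root (Perron theorem, Theorem \ref{Ptheorem}), so $\Lambda$ is smooth and its $T$-derivative at a point where the function strictly increases through zero is strictly positive; applying the analytic implicit function theorem to $\Lambda(m,T)=0$ then yields an analytic $T_c$ with $T_c'(m)=-\partial_m\Lambda/\partial_T\Lambda$.

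Finally I would analyze the boundary behavior $\lim_{m\to0}T_c(m)=\lim_{m\to m^*}T_c(m)=\infty$. As $m\to m^*$, by definition $\Lambda(m^*,\infty)=0$, so the upper limit of the strictly increasing function $T\mapsto\Lambda(m,T)$ tends to $0^+$; since the function is strictly below its supremum for every finite $T$, the zero-crossing $T_c(m)$ must be pushed to $+\infty$ as $m\uparrow m^*$. As $m\to0$, the relevant fact is that $\Lambda(0,T)=\Lambda(0,0)=\max_i\overline{r}_i<0$ for every finite $T$ (the slow-migration limit, formula \eqref{m=0}), so for small $m$ the curve $T\mapsto\Lambda(m,T)$ stays negative up to larger and larger $T$ before the $T\to\infty$ effect (which yields $\chi>0$ through the double limit \eqref{chieq}) can push it positive; hence again $T_c(m)\to\infty$. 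The main obstacle I expect is making these two limit arguments rigorous rather than heuristic: one needs uniform control of $\Lambda(m,T)$ as $m\to0$ and $m\to m^*$, which requires care because the two limits $T\to\infty$ and $m\to0$ (resp. $m\to m^*$) do not commute—indeed the noncommutation $\Lambda(0,\infty)=\chi>0\neq\max_i\overline{r}_i=\lim_{T\to\infty}\Lambda(0,T)$ is exactly what forces the blow-up of $T_c$. I would handle this by combining the monotone convergence of $\Lambda(m,\cdot)$ with continuity of the limit functions $\Lambda(m,0)$ and $\Lambda(m,\infty)$ in $m$, showing that for any fixed large $T$ the value $\Lambda(m,T)$ stays negative once $m$ is close enough to the relevant endpoint, which forces the zero-crossing to exceed that $T$.
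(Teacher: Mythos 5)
Your proposal follows essentially the same route as the paper's proof: strict monotonicity in $T$ (Conjecture \ref{Conj1}) combined with $\Lambda(m,0)<0$ for all $m>0$ and the sign of $\Lambda(m,\infty)$ on $(0,m^*)$ versus $[m^*,\infty)$ yields a unique crossing point $T_c(m)$ with the stated separation property, and the analyticity of $\Lambda$ from Proposition \ref{Prop3} plus the implicit function theorem yields analyticity of $T_c$. You actually go a bit further than the paper, whose proof never verifies the claimed boundary limits $\lim_{m\to 0}T_c(m)=\lim_{m\to m^*}T_c(m)=\infty$; your argument for these (for fixed $T$, continuity of $\Lambda(\cdot,T)$ in $m$ together with $\Lambda(0,T)=\max_i\overline{r}_i<0$, respectively $\Lambda(m^*,T)<\Lambda(m^*,\infty)=0$, forces $T_c(m)>T$ near each endpoint) is correct, although your discussion of why $\partial_T\Lambda\neq 0$ at the crossing is circular — a point the paper's own proof glosses over in exactly the same way.
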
 
\begin{proof}
For all $m>0$ we have $\Lambda(m,0)<0$.  
If $m\geq m^*$, then $\Lambda(m,\infty)<0$ and by the monotonicity of $\Lambda(m,T)$ with respect to $T$, for all $T>0$ we have $\Lambda(m,T)<0$. If $m\in(0,m^*)$, then $\Lambda(m,\infty)>0$ and 
by the monotonicity of $\Lambda(m,T)$ with respect to $T$, there exists a unique value $T=T_c(m)$ such that $\Lambda(m,T)>0$ if $T>T_c(m)$
and $\Lambda(m,T)<0$ if $T<T_c(m)$.

Using Proposition \ref{Prop3}, the function
$\Lambda(m,T)$ is analytic in $m$ and $T$. The implicit function theorem implies that $T=T_c(m)$ is analytic in $m$ since it is the solution of equation $\Lambda(m,T)=0$.
\end{proof}

Examples where the critical curve $T=T_c(m)$ exists are provided in Section \ref{TIM}. The time independence of the migration matrix is not a necessary condition for the existence of the critical curve $T=T_c(m)$, see Sections \ref{NotIncr} and \ref{m_star_infini}.
Note that in the symmetric case we  have $T_c(m)=1/\nu_c(m)$, where $\nu_c(m)$ is the critical curve of Katriel, see \cite[Theorem 1(II)]{Katriel}.

\begin{rem}\label{DIGNew} For time independent migration, and if Conjecture \ref{Conj1} is true, as a consequence of Proposition \ref{PropT_c},  growth can occur only if $m<m^*$. In contrast, in the time dependent migration case,  then we can have $\Lambda(m,T)>0$ (growth) for some $m\in(m^*,\infty)$. Examples showing this behavior are provided in Sections \ref{ExempleCL} and \ref{ExempleESL}. 
\end{rem}

\subsection{Stochastic environment}\label{SE}
In this section, we briefly explain why most of our results remain valid if the growth rates are stochastic. More precisely, we consider a Markov Feller process $(\omega_t)_{t \geq 0}$ on a compact state $S$. {For precise definition, the reader is referred to \cite{BLSSarXiv}}. For each $1 \leq i \leq n$ we consider  a continuous function $r_i : S \to \mathbb{R}$. We also consider for each $s \in S$ a matrix $L(s) = (l_{ij}(s))_{ij}$ which satisfies \eqref{Lii} and we assume that $s \mapsto L(s)$ is continuous on $S$. We then have the system of differential equations
\begin{equation}\label{eq1:sto}
	\frac{dx_i}{dt}=r_i(\omega_t)x_i+m\sum_{j\neq i}\left(\ell_{ij}(\omega_t)x_j-\ell_{ji}(\omega_t)x_i\right),\qquad 1\leq i\leq n, 
\end{equation}
We assume that $(\omega_t)_{t \geq 0}$ has a unique stationary distribution $\mu$ on $S$. {This is a consequence of the fact that}, for all bounded continuous function $f: S \to \mathbb R$, with probability one,
\begin{equation}
    \label{eq:ergodic}
    \lim_{t \to \infty} \frac{1}{t} \int_0^t f(\omega_u) du = \int_S f(s) \mu(ds)
\end{equation}
In particular, in analogy with the periodic case, we let 
\[
\overline r_i = \int_S r_i(s) \mu(ds)
\]
be the local average growth rate in each patch in the absence of migration ($m=0$). Formula \eqref{eq:ergodic} implies that when $m=0$, for each $1 \leq i \leq n$, with probability one,
\[
\lim_{t \to \infty} \frac{1}{t} \ln(x_i(t)) = \lim_{t \to \infty} \frac{1}{t} \int_0^t r_i(\omega_u)du = \overline r_i.
\]
For $s\in S$, we let $R(s) = \mathrm{diag}(r_1(s), \ldots, r_n(s))$, $A(s) = R(s) + mL(s)$ and for a function $f$ defined on $S$ and with values in $\mathbb{R}$ or in the set of matrices, we let $\overline f = \int_S f(s) \mu(ds)$ . 
{Setting ${x(t)}=\left(
	x_1(t),\cdots,
	x_n(t)\right)^\top$, Equation \eqref{eq1:sto} can be rewritten as 
 \begin{equation*}
\frac{dx(t)}{dt}=A(\omega_t){x},
\end{equation*}}
By  Proposition 1 in
\cite{BLSSarXiv}, we have:
\begin{propo}
There exists $\Lambda \in \mathbb{R}$ such that, for all $x(0) > 0$, with probability one, 
\begin{equation}
\label{eq:LyapFeller}
\lim_{t \to \infty} \frac{\ln(\|x(t)\|)}{t} = \Lambda.
\end{equation}
\end{propo}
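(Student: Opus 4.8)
The plan is to exploit the positivity of the dynamics by projecting onto the simplex $\Delta$, exactly as in the periodic setting (Section \ref{ReductionSimplex}), and then to invoke ergodicity of an enlarged Markov process. First I would set $\rho(t)=\langle x(t),\mathbf 1\rangle$ and $\theta(t)=x(t)/\rho(t)$; the computation leading to \eqref{eqrhoT}--\eqref{eqthetaT} carries over verbatim to the random system and gives
\begin{equation*}
\frac{d\rho}{dt}=\langle A(\omega_t)\theta,\mathbf 1\rangle\rho,\qquad \frac{d\theta}{dt}=A(\omega_t)\theta-\langle A(\omega_t)\theta,\mathbf 1\rangle\theta .
\end{equation*}
Since $\|x(t)\|$ and $\rho(t)$ differ by the bounded factor $\|\theta(t)\|$ on the compact simplex, this yields
\begin{equation*}
\frac{1}{t}\ln\|x(t)\|=\frac{1}{t}\ln\rho(0)+\frac{1}{t}\int_0^t\langle A(\omega_u)\theta(u),\mathbf 1\rangle\,du+o(1),
\end{equation*}
so everything reduces to showing that the time average of $g(\omega,\theta):=\langle A(\omega)\theta,\mathbf 1\rangle$ converges almost surely to a deterministic constant, independent of $\theta(0)\in\Delta$.

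Second, I would observe that $(\omega_t,\theta_t)_{t\ge0}$ is a Markov Feller process on the compact space $S\times\Delta$, so by Krylov--Bogolyubov it admits at least one invariant probability measure. If this measure is unique and ergodic, the Birkhoff ergodic theorem gives
\begin{equation*}
\frac{1}{t}\int_0^t g(\omega_u,\theta_u)\,du\longrightarrow \int_{S\times\Delta}g\,d\nu=:\Lambda
\end{equation*}
almost surely, a non-random number, and then $\tfrac1t\ln\rho(0)\to0$ finishes the identification of the limit.

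Third --- and this is where the main work lies --- I would establish uniqueness of the invariant measure by a synchronization (contraction) argument. Because $A(s)$ is Metzler and irreducible for every $s$, the fundamental matrix of the linear flow has strictly positive entries for positive times (as in Lemma \ref{lemme1}); by Birkhoff's contraction theorem the associated flow is a strict contraction of the Hilbert projective metric on the positive cone. Consequently two solutions $\theta(t),\theta'(t)$ of the $\theta$-equation driven by the \emph{same} realization of $\omega$ satisfy $d_H(\theta(t),\theta'(t))\to0$ almost surely. This pathwise synchronization forces any two invariant measures of $(\omega_t,\theta_t)$ to coincide, yields ergodicity, and shows that the limit $\Lambda$ is independent of $\theta(0)$, hence of $x(0)>0$.

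The hard part will be this synchronization step: one must rule out the $\theta$-trajectory being trapped on the boundary $\partial\Delta$ (where irreducibility alone need not produce contraction) and must convert the strict positivity of the deterministic flow over a fixed time window into a contraction rate that survives integration against the noise and the Feller structure of $\omega$. An alternative route avoiding the skew-product would apply Kingman's subadditive ergodic theorem to the submultiplicative cocycle $t\mapsto\ln\|\Phi_0^t\|$ built from the stationary shift of $\omega$, obtaining the constant $\Lambda$ directly; one would then use the nonnegativity of the entries of $\Phi_0^t$, together with $\|\Phi_0^t\|_1=\max_j\|\Phi_0^t e_j\|_1$ and the comparison $\Phi_0^t x(0)\ge x_k(0)\,\Phi_0^t e_k$ for an index $k$ with $x_k(0)>0$, to transfer the rate from the matrix norm to $\|x(t)\|$ for arbitrary $x(0)>0$, positivity being precisely what guarantees no exponential rate is lost through cancellation.
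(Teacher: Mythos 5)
The paper offers no internal proof of this proposition: its ``proof'' is a one-line citation to Proposition 1 of the companion note \cite{BLSSarXiv}, so there is nothing in the present text to compare against step by step. Your plan is, in substance, the argument that companion note carries out: reduce to the simplex via $(\rho,\theta)$ as in Section \ref{ReductionSimplex}, note that $(\omega_t,\theta_t)$ is a Feller Markov process on the compact space $S\times\Delta$ (this is exactly why the paper stresses the Feller property and the continuity of $s\mapsto R(s),L(s)$ in its closing remark of Section \ref{SE}), identify $\Lambda$ with the integral of $\langle A(s)\theta,\mathbf 1\rangle$ against an invariant measure of this pair, and derive uniqueness of that measure --- hence a deterministic limit independent of $x(0)$ --- from the strict positivity of the fundamental matrix and projective (Hilbert-metric) contraction. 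So you have reconstructed the intended proof rather than found an alternative to it, and the obstacles you flag (boundary of $\Delta$, uniformity of the contraction rate) are the genuine technical core; both are surmountable: positivity of the fundamental matrix for $t>0$ (the analogue of Lemma \ref{lemme1}) prevents trapping on $\partial\Delta$, and compactness of $S$, continuity of $A$, and irreducibility of each $A(s)$ yield a uniform positive lower bound on the entries of the fundamental matrix over unit time windows, hence a uniform Birkhoff contraction factor.

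Two points need tightening before this is a proof. First, Birkhoff's theorem applied to the unique invariant measure $\nu$ only gives convergence of the time averages for $\nu$-almost every initial condition, whereas the proposition demands it for \emph{every} $x(0)>0$ and every initial law of $\omega$ (compare the standing assumption \eqref{eq:ergodic}, which is itself of this ``every initial condition'' type). The standard repair, and the step your sketch glosses over, is the occupation-measure argument for Feller processes on compact spaces: almost surely, every weak-$*$ limit point of the empirical occupation measures of $(\omega_t,\theta_t)$ is invariant, hence equals $\nu$ once uniqueness is established, which upgrades the convergence to arbitrary initial conditions. Second, your alternative route via Kingman's subadditive ergodic theorem suffers the same defect in a worse form: Kingman operates under the stationary law $\mathbb{P}_\mu$, so by disintegration it gives the limit only for $\mu$-almost every starting point of $\omega$; without an additional coupling or the occupation-measure argument above, it does not deliver the statement for all initial conditions, so it cannot by itself replace the skew-product argument.
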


For all $T > 0$, we let $\omega_t^T = \omega_{t / T}$. We let $\Lambda(m,T)$ denote the Lyapunov exponent given by \eqref{eq:LyapFeller}, when $(\omega_t)_{t \geq 0}$ is replaced by $(\omega_t^T)_{t \geq 0}$ in \eqref{eq1:sto}. We can prove the following results:

\begin{theorem}
\label{thm:stochastic}
The Lyapunov exponent $\Lambda(m,T)$ satisfies the following properties
\begin{enumerate}
\item 
For all $m, T > 0$, 
\begin{equation}
\label{bounds}
 \Lambda(m,T) \leq \chi:= \int_S \max(r_i(s)) \mu(ds).
\end{equation}
\item 
For all $m > 0$, 
\begin{equation}
\label{limT0}
\lim_{T \to 0} \Lambda(m,T) = \lambda_{\max}( \overline{R + m L}).
\end{equation}
\item 
For all $m > 0$, 
\begin{equation}
\label{limTinfty}
\lim_{T \to \infty} \Lambda(m,T) = \int_S \lambda_{\max}( R(s) + mL(s)) \mu(ds).
\end{equation}
\item 
We have
\begin{equation}
\label{limTinftym0}
\lim_{m \to 0}\lim_{T \to \infty} \Lambda(m,T) = \chi
\end{equation}
In particular, $\sup_{m,T} \Lambda(m,T) = \chi$.
\item 
We have
\begin{equation}
\label{eq:limT0m0}
\lim_{m \to 0} \lim_{T \to 0} \Lambda(m,T) = \max(\overline r_i).
\end{equation}
\item
We have
\begin{equation}
\label{eq:limminfty}
\lim_{m \to \infty} \lim_{T \to \infty} \Lambda(m,T)  = \sum_i \overline{p_i  r_i}, \quad \lim_{m \to \infty} \lim_{T \to 0} \Lambda(m,T) = \sum_i q_i \overline{r_i}
\end{equation}
\end{enumerate}
\end{theorem}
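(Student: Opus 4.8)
The plan is to transpose, essentially line by line, the periodic arguments of Theorems~\ref{upperborneLambda} and~\ref{thm1}, Proposition~\ref{PropDL} and Theorem~\ref{DIGthm} to the ergodic setting, replacing every period average $\int_0^1(\cdot)\,d\tau$ by the ergodic average $\int_S(\cdot)\,\mu(ds)$. The substitution is legitimate thanks to the ergodic theorem~\eqref{eq:ergodic} together with the observation that the time-rescaled process $(\omega_t^T)_{t\ge0}=(\omega_{t/T})_{t\ge0}$ has the \emph{same} stationary distribution $\mu$ as $(\omega_t)_{t\ge0}$; in particular $\chi$ in~\eqref{bounds} does not depend on $T$. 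Item~1 is then the verbatim analogue of Theorem~\ref{upperborneLambda}: with $r_{\max}(s)=\max_i r_i(s)$ and $\rho=\sum_i x_i$, summing~\eqref{eq1:sto} gives $\dot\rho\le r_{\max}(\omega_t^T)\rho$, whence
\begin{equation*}
\Lambda(m,T)=\lim_{t\to\infty}\frac1t\ln\|x(t)\|\le\limsup_{t\to\infty}\frac1t\int_0^t r_{\max}(\omega_u^T)\,du=\int_S r_{\max}(s)\,\mu(ds)=\chi,
\end{equation*}
the last equality holding almost surely by~\eqref{eq:ergodic} after the change of variable $v=u/T$.

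The core of the proof is items~2 and~3, which should mirror Sections~\ref{HFL} and~\ref{LFL}. I would first set up the stochastic analogue of the integral formula of Theorem~\ref{thm2}. The change of variables~\eqref{rhotheta} turns the cocycle into a $\theta$-process on the simplex $\Delta$ driven by $(\omega_t^T)$, and the joint process $(\omega_t^T,\theta_t)$ on $S\times\Delta$ is Feller. Since $A(s)=R(s)+mL(s)$ is Metzler and irreducible, its projective action contracts the Hilbert metric on $\Delta$, so this joint process admits a \emph{unique} invariant probability measure $\nu_T$, and a Furstenberg-type formula gives
\begin{equation*}
\Lambda(m,T)=\int_{S\times\Delta}\langle A(s)\theta,{\bf 1}\rangle\,\nu_T(ds,d\theta)=\int_{S\times\Delta}\sum_i r_i(s)\theta_i\,\nu_T(ds,d\theta),
\end{equation*}
where I used $\langle L(s)\theta,{\bf 1}\rangle=0$. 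In the fast regime $T\to0$ the driving process mixes infinitely fast, so by averaging the $\theta$-marginal collapses onto the Perron vector $\bar\pi$ of $\overline A=\overline{R+mL}$ and $\nu_T\Rightarrow\mu\otimes\delta_{\bar\pi}$; this yields $\sum_i\bar\pi_i\overline r_i=\langle\overline A\,\bar\pi,{\bf 1}\rangle=\lambda_{\max}(\overline{R+mL})$, which is~\eqref{limT0}. In the slow regime $T\to\infty$ the environment is quasi-frozen, so conditionally on $\omega=s$ the $\theta$-process relaxes to the Perron vector $p^A(s)$ of $A(s)$, giving $\nu_T\Rightarrow\int_S\delta_{p^A(s)}\,\mu(ds)$ and hence $\int_S\langle A(s)p^A(s),{\bf 1}\rangle\,\mu(ds)=\int_S\lambda_{\max}(R(s)+mL(s))\,\mu(ds)$, which is~\eqref{limTinfty}.

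Items~4--6 then follow by iterating the limit in $m$ inside or outside these formulas, exactly as in Proposition~\ref{PropDL} and Theorem~\ref{DIGthm}. Letting $m\to0$ in~\eqref{limTinfty} and using $\lambda_{\max}(R(s))=\max_i r_i(s)$ gives~\eqref{limTinftym0}; combined with item~1 this yields $\sup_{m,T}\Lambda(m,T)=\chi$. Letting $m\to0$ in~\eqref{limT0} and using $\lambda_{\max}(\overline R)=\max_i\overline r_i$ gives~\eqref{eq:limT0m0}. For item~6 I would invoke the singular-perturbation computation of Section~\ref{FM}: as $m\to\infty$, $\lambda_{\max}(R(s)+mL(s))\to\sum_i p_i(s)r_i(s)$, where $p(s)$ is the null vector of $L(s)$, which after integration against $\mu$ gives $\sum_i\overline{p_ir_i}$, while $\lambda_{\max}(\overline R+m\overline L)\to\sum_i q_i\overline r_i$ gives the second limit in~\eqref{eq:limminfty}.

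The main obstacle is the rigorous justification of the two weak-convergence claims $\nu_T\Rightarrow\mu\otimes\delta_{\bar\pi}$ (fast) and $\nu_T\Rightarrow\int_S\delta_{p^A(s)}\,\mu(ds)$ (slow). These are the stochastic substitutes for the averaging and Tikhonov singular-perturbation arguments used in the periodic case, and controlling the invariant measures $\nu_T$ uniformly as $T\to0$ or $T\to\infty$ is delicate; I expect this to rest on the general averaging and adiabatic results of the companion paper~\cite{BLSSarXiv} rather than on an elementary argument.
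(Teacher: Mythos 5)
Your proposal matches the paper's proof in all essentials: item 1 is the verbatim periodic argument (Theorem \ref{upperborneLambda}) with the ergodic theorem \eqref{eq:ergodic} justifying the temporal mean, items 4--6 are the iterated limits handled exactly as in Proposition \ref{PropDL} via Lemma \ref{lem:regularity-lambdamax} and dominated convergence, and items 2--3 rest, as you anticipate, on the companion paper \cite{BLSSarXiv} (its Propositions 4 and 5), which is precisely what the paper cites. Your Furstenberg/invariant-measure sketch is a fair outline of what those cited results accomplish, but since you, like the paper, ultimately defer their rigorous justification to \cite{BLSSarXiv}, the two proofs coincide.
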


\begin{proof}
The proof of the upper bound \eqref{bounds} is exactly the same as in the periodic case, except that we use \eqref{eq:ergodic} to justify the convergence of the temporal mean. The limits \eqref{limT0} 
and \eqref{limTinfty} are consequence, respectively of Propositions 4 and 5 in \cite{BLSSarXiv}. The double limits \eqref{limTinftym0}, \eqref{eq:limT0m0} and \eqref{eq:limminfty} are proven as in the periodic case, using Lemma \ref{lem:regularity-lambdamax}. 
\end{proof}

\begin{example}{\bf (Periodic case)}
The continuous periodic case corresponds to 
$S = \mathbb{R}/\mathbb{Z}$  identified with the unit circle and 
$\omega_t = s + t \, ({\sf mod} \, 1)$ for some $s \in S.$ 
\end{example}

\begin{example}{\bf (PDMP case)}
Let $S= \{1, \ldots, N\}$ a finite set, and $(\omega_t)_{ t \geq 0}$ a continuous time Markov chain on $S$. Then, $(\omega_t)_{ t \geq 0}$ is a Markov Feller process. The process $(x_t,\omega_t^T)_{t \geq 0}$ is a Piecewise Deterministic Markov Process (PDMP). The case where $N = 2$ and $n = 2$ has been investigated in \cite{benaim}. 
{Theorem \ref{thm:stochastic} extends the results found in \cite{benaim} to the
case of general $N$ and $n$}.
\end{example}

\begin{rem}
{The results given here all rely on results proved in \cite{BLSSarXiv}. In this paper, we have used the fact that the couple $(x_t, \omega_t)_{t \geq 0}$ is a Feller Markov process (see \cite[Lemma 7]{BLSSarXiv}). This is the reason why we assumed that  $s\to R(s)$ and $s\mapsto L(s)$ are continuous functions, in contrast with Section \ref{PE}, where these functions can have discontinuities. }
\end{rem}

{
The proofs of the asymptotic formulas for $\Lambda(m,T)$ when $T$ tends to 0 or $T$ tends to infinity are done in quite different ways, in the {piecewise continuous periodic case} (Theorem \ref{thm1}) and the random case (Theorem \ref{thm:stochastic}). In the random case, these formulas are special cases of the results given in \cite{BLSSarXiv} for a general irreducible cooperative linear system.

In the periodic case, these are general results that deal with an irreducible linear cooperative system, see Remark \ref{notmoregeneral}. 
\color{black}
Moreover, as for the periodic case, one could also give in the random case asymptotic formulas for $\Lambda(m,T)$ when $m$ tends to 0 or $m$ tends to infinity, which are similar to those given in items 3 and 4 of Theorem \ref{thm1}.}

\section{Examples and numerical illustrations}\label{Numerical}

\subsection{Explicit formulas for the two-patch case}\label{2pc}

In the simplest two-patch case ($n=2$) the system \eqref{eq1} is
\begin{equation}\label{2eq1}
\begin{array}{l}
\frac{dx_1}{dt}= r_1(t/T)x_1 +m (\ell_{12}(t/T)x_2- \ell_{21}(t/T)x_1),\\[1mm]
\frac{dx_2}{dt}= r_2(t/T)x_2 +m (\ell_{21}(t/T)x_1- \ell_{12}(t/T)x_2).
\end{array}
\end{equation}

\begin{propo}\label{Prop2pc}
We obtain the following explicit formulas for $\Lambda(m,0)$ and $\Lambda(m,\infty)$:
$$
\begin{array}{rcl}
\Lambda(m,0)&=&
\frac{1}{2}\left(\overline{r}_1+\overline{r}_2
+\sqrt{D(\overline{r}_1,\overline{r}_2,\overline{\ell}_{21},\overline{\ell}_{12})}\right)
-m\frac{\overline{\ell}_{12}+\overline{\ell}_{21}}{2},\\[2mm]
\Lambda(m,\infty)&=&
\frac{1}{2}\left(\overline{r}_1+\overline{r}_2
+\int_0^1\sqrt{D(r_1(\tau),
r_2(\tau),\ell_{21}(\tau),\ell_{12}(\tau))}d\tau\right)-m\frac{\overline{\ell}_{12}+\overline{\ell}_{21}}{2}.
\end{array}
$$
where 
\begin{equation}
    \label{Dr1r2}
    D(r_1,r_2,\ell_{21},\ell_{12})=(r_1-r_2+m(\ell_{12}-\ell_{21}))^2+4m^2\ell_{12}\ell_{21}
\end{equation}
When the migration is constant the growth rate of \eqref{2eq1} is given by
$$
\Lambda(m,T)=
\frac{\ell_{12}\overline{r}_1+\ell_{21}\overline{r}_2}{\ell_{12}+\ell_{21}}+
\frac{m}{T}\int_0^T
\frac{(\ell_{12}\theta_2^*(t,T)-\ell_{21}\theta_1^*(t,T))^2}{\theta_1^*(t,T)\theta_2^*(t,T)(\ell_{12}+\ell_{21})}dt,
$$
where $(\theta_1^*,\theta_2^*)$ is the $T$-periodic solution of the differential system equation
$$
\begin{array}{lcl}
\frac{d\theta_1}{dt}&=&
(r_1(t/T)-m\ell_{21}(t/T))\theta_1+ m\ell_{12}(t/T)\theta_2-(r_1(t/T)\theta_1+r_2(t/T)\theta_2)\theta_1,\\[2mm]
\frac{d\theta_2}{dt}&=&
(r_2(t/T)-m\ell_{12}(t/T))\theta_2+ m\ell_{21}(t/T)\theta_1-(r_1(t/T)\theta_1+r_2(t/T)\theta_2)\theta_2.
\end{array}
$$
Since $\theta_2=1-\theta_1$, this equation reduces to a scalar equation on $[0,1]$.
\end{propo}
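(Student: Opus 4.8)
The plan is to specialize the general formulas already established to the two-patch case $n=2$, where everything becomes explicit because $2\times 2$ eigenvalue problems are solvable in closed form. I would treat the three formulas separately, as they rely on three different results from the excerpt.

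First, for $\Lambda(m,0)$ I would start from the Fast regime formula \eqref{T=0}, namely $\Lambda(m,0)=\lambda_{max}\left(\overline{R+mL}\right)$. Here $\overline{R+mL}=\overline{R}+m\overline{L}$ is the $2\times 2$ Metzler matrix with diagonal entries $\overline{r}_1-m\overline{\ell}_{21}$ and $\overline{r}_2-m\overline{\ell}_{12}$ and off-diagonal entries $m\overline{\ell}_{12}$ and $m\overline{\ell}_{21}$. The dominant eigenvalue of a $2\times 2$ matrix $\begin{pmatrix}a&b\\c&d\end{pmatrix}$ is $\tfrac12\bigl(a+d+\sqrt{(a-d)^2+4bc}\bigr)$; substituting $a=\overline{r}_1-m\overline{\ell}_{21}$, $d=\overline{r}_2-m\overline{\ell}_{12}$, $b=m\overline{\ell}_{12}$, $c=m\overline{\ell}_{21}$, one checks that $a+d=\overline{r}_1+\overline{r}_2-m(\overline{\ell}_{12}+\overline{\ell}_{21})$ and $(a-d)^2+4bc=D(\overline{r}_1,\overline{r}_2,\overline{\ell}_{21},\overline{\ell}_{12})$ with $D$ as defined in \eqref{Dr1r2}. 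This yields the stated formula after collecting the $-m(\overline{\ell}_{12}+\overline{\ell}_{21})/2$ term.

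Second, for $\Lambda(m,\infty)$ I would use the Slow regime formula \eqref{T=infini}, namely $\Lambda(m,\infty)=\overline{\lambda_{max}(R+mL)}$. For each fixed $\tau$ the matrix $R(\tau)+mL(\tau)$ is again $2\times 2$, so by the same closed-form eigenvalue expression its dominant eigenvalue is $\tfrac12\bigl(r_1(\tau)+r_2(\tau)-m(\ell_{12}(\tau)+\ell_{21}(\tau))+\sqrt{D(r_1(\tau),r_2(\tau),\ell_{21}(\tau),\ell_{12}(\tau))}\bigr)$. Integrating over $\tau\in[0,1]$ and using linearity of the average (so that $\overline{r_1+r_2}=\overline{r}_1+\overline{r}_2$ and $\overline{\ell_{12}+\ell_{21}}=\overline{\ell}_{12}+\overline{\ell}_{21}$), the non-radical terms integrate explicitly while the square-root term remains under the integral, giving the stated formula.

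Third, for the constant-migration formula I would invoke Proposition \ref{thm2bis} with $n=2$. The eigenvector $p$ of the constant $L$ satisfying $Lp=0$ and $p_1+p_2=1$ is $p=(\ell_{12},\ell_{21})^\top/(\ell_{12}+\ell_{21})$, so $\sum_i p_i\overline{r}_i=(\ell_{12}\overline{r}_1+\ell_{21}\overline{r}_2)/(\ell_{12}+\ell_{21})$, matching the leading term. For the integral term I would evaluate $h(\theta)=\langle L\theta,p/\theta\rangle$ explicitly; writing $L\theta=(\ell_{12}\theta_2-\ell_{21}\theta_1)(1,-1)^\top$ and $p/\theta=(p_1/\theta_1,p_2/\theta_2)^\top$, a direct computation gives $h(\theta)=(\ell_{12}\theta_2-\ell_{21}\theta_1)\bigl(\tfrac{p_1}{\theta_1}-\tfrac{p_2}{\theta_2}\bigr)$, and substituting $p_1=\ell_{12}/(\ell_{12}+\ell_{21})$, $p_2=\ell_{21}/(\ell_{12}+\ell_{21})$ collapses this to $(\ell_{12}\theta_2-\ell_{21}\theta_1)^2/\bigl(\theta_1\theta_2(\ell_{12}+\ell_{21})\bigr)$. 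Combined with the factor $m$ and the time rescaling $\tau=t/T$ in \eqref{Lambdah(theta)0}, this produces the claimed formula. Finally I would record the $\theta$-system: expanding \eqref{eqzitheta0} componentwise with $\ell_{11}=-\ell_{21}$, $\ell_{22}=-\ell_{12}$ gives exactly the stated two equations, and since $\theta_1+\theta_2=1$ on $\Delta$ the system reduces to a scalar equation. I expect the only mildly delicate point to be the algebraic simplification of $h(\theta)$ into a perfect-square form; everything else is routine substitution into results already proved.
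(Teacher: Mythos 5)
Your proposal is correct and follows essentially the same route as the paper's own proof: the formulas for $\Lambda(m,0)$ and $\Lambda(m,\infty)$ are obtained by specializing \eqref{T=0} and \eqref{T=infini} via the closed-form dominant eigenvalue of a $2\times 2$ Metzler matrix, and the constant-migration formula is obtained from \eqref{Lambdah(theta)0}. The paper states these steps tersely, whereas you carry out the algebra explicitly (identifying $p=(\ell_{12},\ell_{21})^\top/(\ell_{12}+\ell_{21})$ and collapsing $h(\theta)$ to the perfect-square form), all of which checks out.
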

\begin{proof}
The formulas for $\Lambda(m,0)$ and $\Lambda(m,\infty)$ follow from \eqref{T=0} and \eqref{T=infini}, respectively, and the fact that
the dominant eigenvalue $\lambda_{max}(R(\tau)+mL(\tau))$ is given by
$$\lambda_{max}(R(\tau)+mL)\!=\!\textstyle
\frac{1}{2}
\left(r_1(\tau)\!+\!r_2(\tau)\!+\!\sqrt{D(r_1(\tau),r_2(\tau),\ell_{21}(\tau),\ell_{12}(\tau))}\right)-m\frac{\ell_{12}(\tau)+\ell_{21}(\tau)}{2},$$
where $D$ is given by \eqref{Dr1r2}.
The formula for $\Lambda(m,T)$ follows from  \eqref{Lambdah(theta)0}.
\end{proof}

In the symmetric constant migration case  ($\ell_{12}(\tau)=\ell_{21}(\tau)=1$) we obtain the formulas 
$$
\begin{array}{rcl}
\Lambda(m,0)&=&
\frac{1}{2}\left(\overline{r}_1+\overline{r}_2
+\sqrt{(\overline{r}_1-\overline{r}_2)^2+4m^2}
\right)-m,\\
\Lambda(m,\infty)&=&
\frac{1}{2}\left(\overline{r}_1+\overline{r}_2
+\int_0^1\sqrt{({r}_1(\tau)-{r}_2(\tau))^2+4m^2}d\tau
\right)-m.
\end{array}
$$
These formulas were given by Katriel \cite{Katriel}, see the formula (12) and the formula preceding (17) in \cite{Katriel}.
In the symmetric case ($\ell_{12}=\ell_{21}=1$) we obtain the formula
$$
\Lambda(m,T)=
\frac{\overline{r}_1+\overline{r}_2}{2}+
m\left(\frac{1}{2T}\int_0^T
\left(\frac{\theta_2^*(t,T)}{\theta_1^*(t,T)}+
\frac{\theta_1^*(t,T)}{\theta_2^*(t,T)}\right)dt-1\right).
$$
Using, as in \cite{Katriel}, the variable $z=x_2/x_1=\theta_2/\theta_1$, we obtain 
$$
\Lambda(m,T)=
\frac{\overline{r}_1+\overline{r}_2}{2}+
m\left(\frac{1}{2T}\int_0^T
\left(z^*(t,T)+
\frac{1}{z^*(t,T)}\right)dt-1\right).
$$
which is the same formula as \cite[Formula (26)]{Katriel}.

\begin{example}
{\bf{The $\pm1$ model}}. \label{pm1model}
This model corresponds to the two-patch case \eqref{2eq1}, with constant symmetric migration $\ell_{12}=\ell_{21}=1$ and piecewise constant growth rates 
\begin{equation}\label{r1r2poum1}
r_1(\tau)=\left\{\begin{array}{l}
a\mbox{ if }0\leq \tau< 1/2\\
b\mbox{ if }1/2\leq \tau< 1
\end{array}
\right.,\qquad
r_2(\tau)=\left\{\begin{array}{l}
b\mbox{ if }0\leq \tau< 1/2\\
a\mbox{ if }1/2\leq \tau< 1
\end{array}
\right.,
\end{equation}
with $a>0$ and $a+b<0$.
Therefore, $\overline{r}_1=\overline{r}_2=\frac{a+b}{2}<0$ and $\chi=a>0$. Hence, DIG occurs. Note that we have two identical sinks, that are in phase opposition. This model can be reduced to the form 
$a=1-\varepsilon$, $b=-1-\varepsilon$, with $0<\varepsilon<1$, see \cite[Remark 3]{benaim}.  
Using the theoretical formulas in Figure \ref{fig1} and Proposition \ref{Prop2pc}, we have 
$$
\begin{array}{l}
\Lambda(0,T)=\Lambda(m,0)=\Lambda(\infty,T)=-\varepsilon,\quad 
\Lambda(0,\infty)=1+\varepsilon,\\
\Lambda(m,\infty)=-\varepsilon+\sqrt{1+m^2}-m.
\end{array}
$$
\end{example}

These formulas for the $\pm1$ model where already obtained in \cite[Proposition 2.8]{benaim} by using explicit computation of $\Lambda(m,T)$. Indeed, the monodromy matrix is given by $\Phi(T)=e^{TB/2}e^{TA/2}$ where the matrices $A$ and $B$  are given by
$$
A=\left[
\begin{array}{cc}
1-\varepsilon-m&m\\
m&-1-\varepsilon-m
\end{array}
\right],
\quad
B=\left[
\begin{array}{cc}
-1-\varepsilon-m&m\\
m&1-\varepsilon-m
\end{array}
\right].
$$
We can compute explicitly the Perron root of the monodromy matrix, see \cite[Proposition 2.7]{benaim}. 
In this special case we gave in \cite{benaim} some properties that are not extended to the general case considered in the present work. We proved in particular
that the threshold of the dispersal rate at which {growth appears is
exponentially small with respect to the period $T$ for $T$ large}, see \cite[Proposition 2.9]{benaim}. For a more detailed discussion on this issue, see Section \ref{DIGnuT}.  

\subsection{The piecewise constant case}
We consider the system \eqref{eq1}, with piecewise constant 1-periodic growth rates given by 
\begin{equation}
\label{PC}
r_i(\tau)=\left\{\begin{array}{l}
a_i\mbox{ if }0\leq \tau< \alpha\\
b_i\mbox{ if }\alpha\leq \tau< 1
\end{array}
\right.,
\quad 1\leq i\leq n,
\end{equation}
where $\alpha\in(0,1)$ and $a_i$ and $b_i$,  are real numbers. Thus, during a time of duration $\alpha T$ the $i$th population grows with a rate $a_i$, then, during a time of duration $(1-\alpha)T$ the population grows with a rate $b_i$. 
We also assume that the migration terms $\ell_{ij}(\tau)$ are piecewise constant. For simplicity we assume the discontinuity arises at the same value of time as the growth rates \eqref{PC}:
\begin{equation}
\label{PCL}
\ell_{ij}(\tau)=\left\{\begin{array}{l}
h_{ij}\mbox{ if }0\leq \tau< \alpha\\
k_{ij}\mbox{ if }\alpha\leq \tau< 1\\
\end{array}
\right.,
\quad
1\leq i\neq j\leq n,
\end{equation}
where $h_{ij}$ and $k_{ij}$, $i\neq j$, are non negative real numbers such that the matrices $H=(h_{ij})$ and $K=(k_{ij})$, whose diagonal elements are defined as in \eqref{Lii}, are irreducible. This simplest case
is already of much interest, since it illustrates all behaviors depicted in the preceding section. 
The monodromy matrix is given by
\begin{equation}
\label{PhiExplicite}
\Phi(T)=e^{(1-\alpha){T}B}e^{\alpha{T}{A}},
\end{equation}
where the matrices 
$A={\rm diag}(a_i)+mH$ and $B={\rm diag}(b_i)+mK$, 
are time independent matrices.
Hence, we can compute the Perron root $\mu(m,T)$ of the matrix $\Phi(T)$ and use the formula \eqref{Lambda} to compute 
$\Lambda(m,T)=\frac{1}{T}\ln(\mu(m,T))$.

In \eqref{PC} and \eqref{PCL} we have only two discontinuities on each period of time. In Section \ref{ExempleCL} we will consider a case with three discontinuities.

{Note that in the two-patch case, a computer program like Maple is able to compute analytically the monodromy matrix \eqref{PhiExplicite} for any constant matrices $A$ and $B$, and then determine its Perron root, since this computation requires only the solution of a second degree algebraic equation. However, the formula obtained for $\Lambda(m,T)$ is so complicated that we cannot exploit it mathematically, as it was the case in the particular case of the $\pm1$ model considered in Example \ref{pm1model}. Nevertheless, if the values of the model parameters (i.e. the coefficients $a_i$, $b_i$, $h_{ij}$ and $k_{ij}$) are fixed, we can plot the graphs of the functions $(m, T)\mapsto \Lambda(m, T)$, see Figure \ref{fig2}(a), $T\mapsto \Lambda(m,T)$, for $m$ fixed, see Figure \ref{fig2}(d), $m\mapsto \Lambda(m,T)$, for $T$ fixed, see Figure \ref{fig2}(c), and also the critical set where $\Lambda(m,T)=0$, see Figure \ref{fig2}(b). The aim of this section is to consider various examples and draw the corresponding graphs in order to illustrate our main results, to support Conjecture \ref{Conj1}, to show that this conjecture is not true in the case of time-dependent migration, and to show the new features that appear in this case.}

\begin{figure}[ht]
\begin{center}
\includegraphics[width=10cm,
viewport=160 350 440 700]{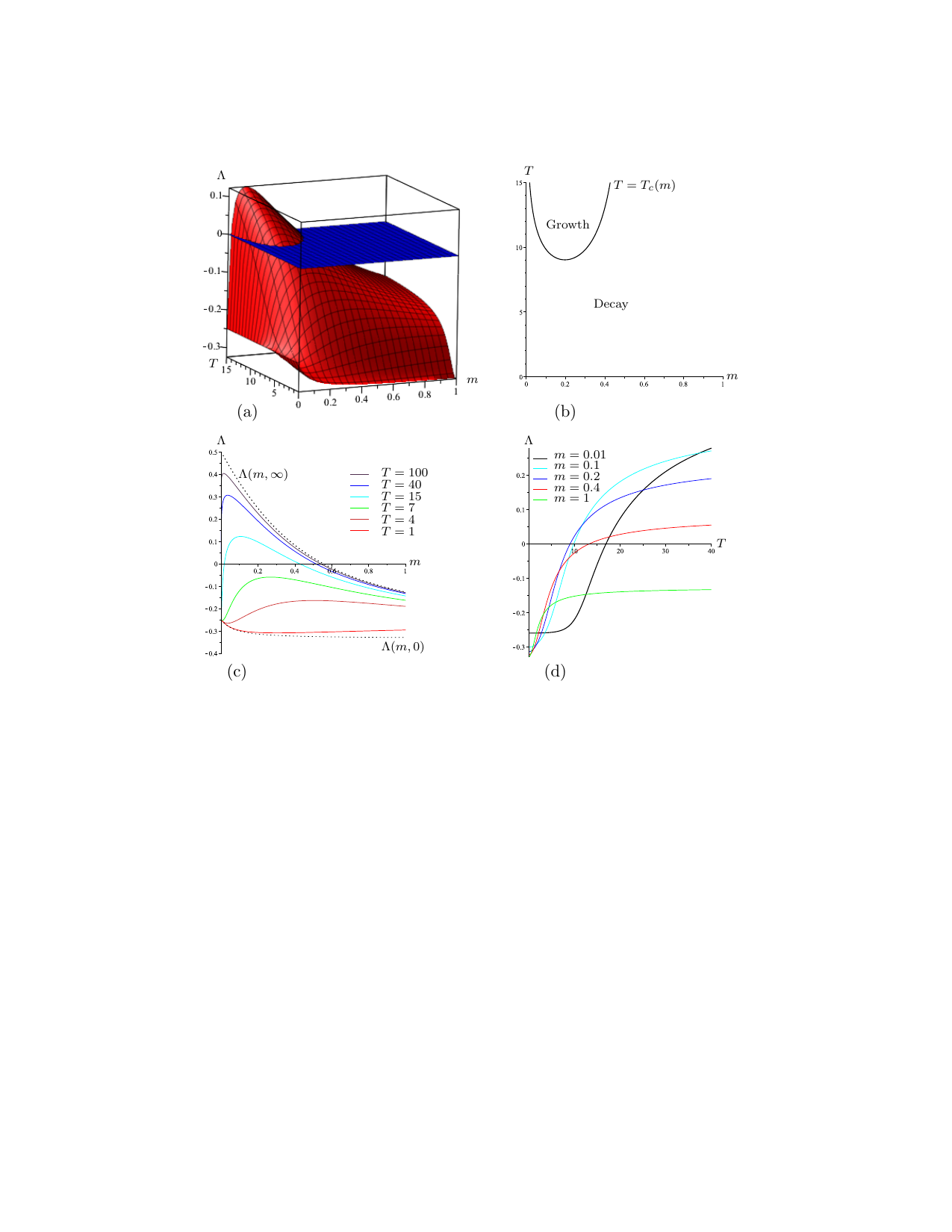}
\caption{(a) The graph of $(m,T)\mapsto \Lambda(m,T)$.  (b) The set $\Lambda(m,T)=0$. (c) Graphs of $m\mapsto \Lambda(m,T)$ with the indicated values of $T$. (d) Graphs of $T\mapsto \Lambda(m,T)$ with the indicated values of $m$. Here we used the two patch model corresponding to the matrices \eqref{AB1} and $\alpha=0.5$.}	\label{fig2}	
\end{center}
\end{figure}

\begin{table}
\caption{Limits of $\Lambda(m,T)$ for the parameters values used in Figure \ref{fig2}}\label{Ex1}
\begin{center}
\begin{tabular}{l}
\hline
$\Lambda(0,\infty)=1/2$,\quad
$\Lambda(0,0)=\Lambda(0,T)=-1/4$\\ 
$\Lambda(\infty,0)=\Lambda(\infty,T)=
\Lambda(\infty,\infty)=-1/3$
\\
\hline
$\Lambda(m,0)=-\frac{3}{8}-\frac{3}{2}m+\frac{1}{8}\sqrt{1+8m+144m^2}$\\[1mm]
$\Lambda(m,\infty)=-\frac{3}{8}-\frac{3}{2}m+\frac{1}{4}\sqrt{4+4m+9m^2}+\frac{1}{8}\sqrt{9-12m+36m^2}$\\[1mm]
$\Lambda(m,\infty)=0$ for $m=m^*=5/9$.\\
\hline
\end{tabular}
\end{center}
\end{table}

\subsection{Time-independent asymmetric migration} 
\label{TIM} 
Our objective in this section is to show numerical simulations with two and three patches that illustrate all our findings and also corroborate  Conjecture \ref{Conj1}.
\color{black}

We consider the two-patch case with time-independent migration given by $\ell_{12}=2$ and $\ell_{21}=1$ and growth rates
\begin{equation}
\label{r1r2}
r_1(\tau)=\left\{\begin{array}{rcl}
1/2&\mbox{if}&0\leq \tau< 1/2\\
-1&\mbox{if}&1/2\leq \tau< 1
\end{array}
\right.,\qquad
r_2(\tau)=\left\{\begin{array}{rcl}
-3/2&\mbox{if}&0\leq \tau< 1/2\\
1/2&\mbox{if}&1/2\leq \tau< 1
\end{array}
\right..
\end{equation}
Hence $\alpha=1/2$ and the matrices 
$A$ and $B$ in \eqref{PhiExplicite} are given by
\begin{equation}\label{AB1}
A=\left[
\begin{array}{cc}
1/2-m&2m\\
m&-3/2-2m
\end{array}
\right],
\quad
B=\left[
\begin{array}{cc}
-1-m&2m\\
m&1/2-2m
\end{array}
\right].
\end{equation}
We have $\chi=1/2$, 
$\overline{r}_1=-1/4$ and 
 $\overline{r}_2=-1/2$. 
Therefore, the patches are sinks and DIG occurs. 
Using Remark \ref{delta}, we 
have $p_1=2/3$ and $p_2=1/3$.
Using the theoretical formulas in Figure \ref{fig1} and Proposition \ref{Prop2pc}, we obtain the expressions shown in Table \ref{Ex1}. We show in Figure \ref{fig2} several plots of $\Lambda(m,T)$ (obtained by the Maple software).

\noindent
{\bf  Comments on Figure \ref{fig2}}. The strictly decreasing functions $m\mapsto \Lambda(m,0)$ and 
$m\mapsto \Lambda(m,\infty)$, are depicted in dotted line on panel (c) of the figure. 
Panel (d) of the figure shows that 
for all $m>0$, the functions $T\mapsto\Lambda(m,T)$ are strictly increasing, supporting Conjecture \ref{Conj1}. Hence, there exists a critical curve $T=T_c(m)$ defined for $0<m<m^*$ such that $T_c(0)=T_c(m^*)=\infty$ and DIG occurs if and only if $T>T_c(m)$, as depicted in panel (b) of the figure. Panel (c) of the Figure shows the graphs of functions $m\mapsto\Lambda(m,T)$ and illustrates their convergence 
toward $\Lambda(m,0)$ and $\Lambda(m,\infty)$ as $T$ tends to 0 and $\infty$, respectively.
Notice that for $0<T<\infty$, the functions $m\mapsto\Lambda(m,T)$ are not monotonic.

In the supplementary material Section S1, we give numerical simulations in a three-patch case showing that the function $T\mapsto \Lambda(m,T)$ is increasing, and supporting Conjecture \ref{Conj1}.

\subsection{Time-dependent irreducible migration} \label{NCM}
Our objective in this section is to show numerical simulations with two and three patches that illustrate all our findings and show that  Conjecture \ref{Conj1} is not true when the migration is not constant.
\color{black}

\subsubsection{The function $T\mapsto \Lambda(m,T)$ is not always increasing}\label{NotIncr}
In the supplementary material Section S2.1, we consider the two-patch case with time-dependent migration 
$$
\ell_{12}(\tau)=\left\{\begin{array}{rcl}
1&\mbox{if}&0\leq \tau< 1/2\\
2&\mbox{if}&1/2\leq \tau< 1
\end{array}
\right.,\qquad
\ell_{21}(\tau)=\left\{\begin{array}{rcl}
2&\mbox{if}&0\leq \tau< 1/2\\
1&\mbox{if}&1/2\leq \tau< 1
\end{array}
\right..
$$
The growth rates are given by \eqref{r1r2}. Hence, $\alpha=1/2$ and  
the matrices $A$ and $B$ in \eqref{PhiExplicite} are given by 
\begin{equation}\label{AB2S}
A=\left[
\begin{array}{cc}
1/2-2m&m\\
2m&-3/2-m
\end{array}
\right],
\quad
B=\left[
\begin{array}{cc}
-1-m&2m\\
m&1/2-2m
\end{array}
\right].
\end{equation}
We have
$\chi=1/2$, $\overline{r}_1=-1/4$ and $\overline{r}_2=-1/2$.
Therefore, the patches are sinks and DIG occurs.
Note that $\Lambda(\infty,T)=-2/3$ and $\Lambda(\infty,0)=-3/8$, as depicted in the in the supplementary material Table S1. Therefore $\Lambda(\infty,T)<\Lambda(\infty,0)$ and,
for $m$ large enough, the property \eqref{Lambda(m,0)<Lambda(m,T)}, which is true in the case of time independent migration, is not satisfied. Therefore,  Conjecture \ref{Conj1} is not true in general for time dependent migration. 

The plot of the graphs of the functions $(m, T)\mapsto \Lambda(m, T)$, $T\mapsto \Lambda(m,T)$, for $m$ fixed,  $m\mapsto \Lambda(m,T)$, for $T$ fixed, and also the critical set where $\Lambda(m,T)=0$ are shown in the in the supplementary material Figure S2 . 
We can make the same comments on this figure, as those made in the previous section on Figure \ref{fig2}, except that, in contrast with Figure \ref{fig2}(c), for $m$ large enough, we have $\Lambda(m,T)<\Lambda(m,0)$ and the function $T\mapsto \Lambda(m,T)$ is decreasing instead of increasing.

\subsubsection{Growth can occur for all $m>0$} \label{m_star_infini}
In the supplementary material Section S2.2, we consider the two-patch case with time-dependent migration 
$$
\ell_{12}(\tau)=\left\{\begin{array}{rcl}
5&\mbox{if}&0\leq \tau< 1/2\\
1&\mbox{if}&1/2\leq \tau< 1
\end{array}
\right.,\qquad
\ell_{21}(\tau)=\left\{\begin{array}{rcl}
1&\mbox{if}&0\leq \tau< 1/2\\
5&\mbox{if}&1/2\leq \tau< 1
\end{array}
\right..
$$
The growth rates are given by \eqref{r1r2}. Hence, $\alpha=1/2$ and
the matrices $A$ and $B$ in \eqref{PhiExplicite} are given by
\begin{equation}\label{ABm_star_infini}
A=\left[
\begin{array}{cc}
1/2-m&5m\\
m&-3/2-5m
\end{array}
\right],
\quad
B=\left[
\begin{array}{cc}
-1-5m&m\\
5m&1/2-m
\end{array}
\right].
\end{equation}
We have
$\chi=1/2$, $\overline{r}_1=-1/4$ and $\overline{r}_2=-1/2$.
Therefore, the patches are sinks and DIG occurs.

The plot of the graphs of the functions $(m, T)\mapsto \Lambda(m, T)$, $T\mapsto \Lambda(m,T)$, for $m$ fixed,  $m\mapsto \Lambda(m,T)$, for $T$ fixed, and also the critical set where $\Lambda(m,T)=0$ are shown in the supplementary material Figure S3. 
We can make the same comments on this figure, as those made in the previous section on Figure \ref{fig2}, except that, in contrast with Figure \ref{fig2}(c), the critical curve $T=T_c(m)$ is defined for all $m>0$. Indeed, $\Lambda(\infty,T)>0$, for any $m>0$, as depicted in the in the supplementary material Table S2.

\begin{figure}[ht]
\begin{center}
\includegraphics[width=10cm,
viewport=160 580 440 710]{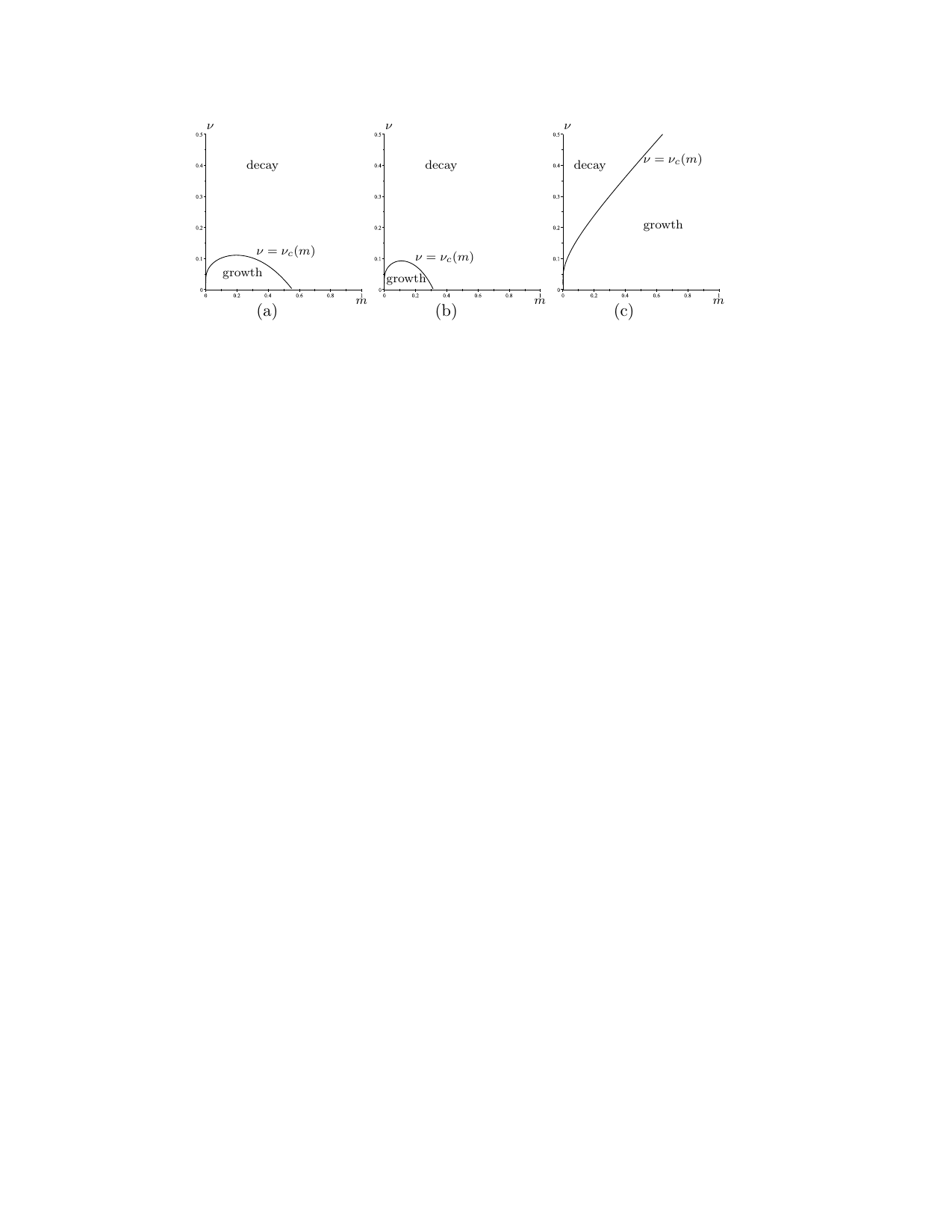}
\caption{The set where $\Lambda(m,1/\nu)>0$ in the $(m,\nu)$ parameter-plane. (a) Parameters values of \eqref{AB1}. (b) Parameters values of \eqref{AB2S}. (c) Parameters values of \eqref{ABm_star_infini}}	\label{fig5}	
\end{center}
\end{figure}

\subsubsection{The set where growth occurs in the $(m,\nu)$ parameter-plane}\label{DIGnuT}

The difference between the model \eqref{AB1}, considered in Section, \ref{TIM}, \eqref{AB2} considered in Section  \ref{NotIncr} and \eqref{ABm_star_infini} considered in Section \ref{m_star_infini} is in the migration matrix which is assumed to be independent of time in the first model whereas it depends on it in the two following ones.
In Figure \ref{fig5} we display the set where growth occurs in the $(m,\nu)$ parameter-plane, where $\nu=1/T$ is the frequency. The figure shows that the critical curve $\nu=\nu_c(m)$, where $\nu_c(m):=1/T_c(m)$, is tangent to the $\nu$-axis at the origin, i.e. $\lim_{m\to0}\nu'_c(m)=-\infty$. This property was already numerically observed in the symmetric migration case by Katriel \cite{Katriel}. This  property was established in \cite{benaim}, for the $\pm1$ model considered in Example \ref{pm1model}. Using the explicit expression of $\Lambda(m,T)$ we showed that when $\nu\to 0$, the threshold
$m^*(\nu)=\inf_{\nu>0}\{m:\Lambda(m,1/\nu)>0\}$
at which growth occurs is of order 
$e^{-(1-\varepsilon)/\nu}$, see \cite[Proposition 2.9]{benaim}. Therefore the critical curve
$\nu=\nu_c(m)$ has asymptotic behavior of the form 
$m\sim e^{-k/\nu}$, that is $m$ becomes
exponentially small in $1/\nu$ near the origin.

Note that in \eqref{AB2} the migration is always stronger towards the most unfavorable patch. As expected, and as illustrated in Figure \ref{fig5}(b), the region of the $(m,\nu)$ for which growth occurs is narrowed, but it still remains present. 
In \eqref{ABm_star_infini} the migration is always stronger towards the most favorable patch. As expected, and as illustrated in Figure \ref{fig5}(c), the region of the $(m,\nu)$ for which growth occurs is bigger. In this last case, growth can occur for all $m>0$.

\begin{figure}[ht]
\begin{center}
\includegraphics[width=10cm,
viewport=160 540 440 740]{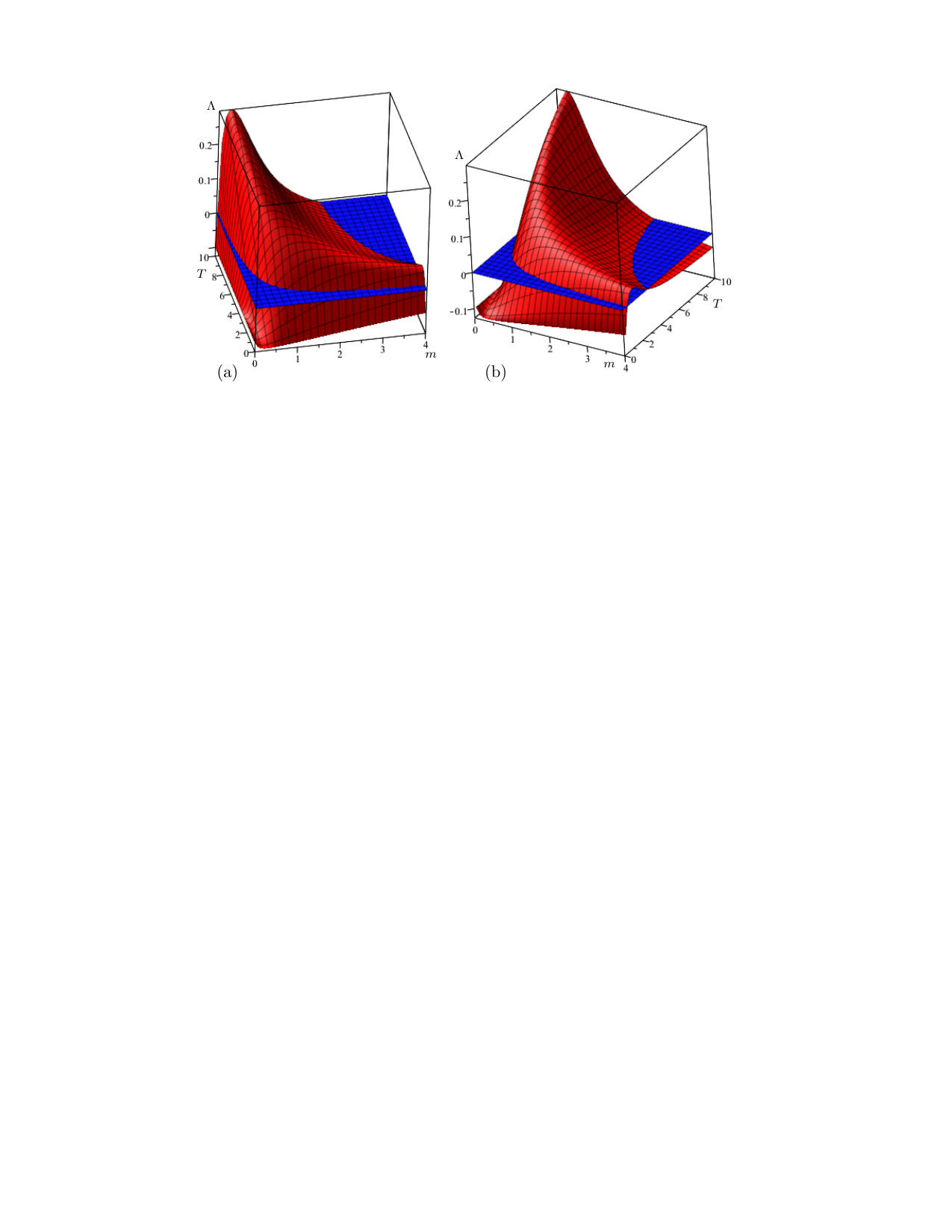}
\caption{The graph of $(m,T)\mapsto \Lambda(m,T)$ corresponding to the matrices \eqref{ABCmatrix}, seen from left (a) and right (b), showing the non monotonicity of $T\mapsto \Lambda(m,T)$.}	\label{fig7}	
\end{center}
\end{figure}

\subsubsection{Growth can also occur for $m>m^*$: a two-patch example}
\label{ExempleCL}
In this section we consider the following example of two-patches \eqref{2eq1}, with piecewise constant 1-periodic growth rates and migration terms, having three discontinuities on each period of time, and given by 
\begin{equation}
\label{PCCL}
r_1(\tau)=\left\{\begin{array}{r}
0\mbox{ if }0\leq \tau< \frac{1}{3}\\[1mm]
-\frac{4}{5}\mbox{ if }\frac{1}{3}\leq \tau<\frac{2}{3}\\[1mm]
\frac{1}{2}\mbox{ if }\frac{2}{3}\leq \tau<1
\end{array}
\right.,
\quad
r_2(\tau)=\left\{\begin{array}{r}
-\frac{1}{10}\mbox{ if }0\leq \tau< \frac{1}{3}\\[1mm]
\frac{3}{2}\mbox{ if }\frac{1}{3}\leq \tau<\frac{2}{3}\\[1mm]
-2\mbox{ if }\frac{2}{3}\leq \tau<1
\end{array}\right.,
\end{equation}
\begin{equation}
\label{PCMCL}
\ell_{12}(\tau)=\left\{\begin{array}{r}
\frac{1}{10}\mbox{ if }0\leq \tau< \frac{1}{3}\\[1mm]
2\mbox{ if }\frac{1}{3}\leq \tau<\frac{2}{3}\\[1mm]
\frac{1}{100}\mbox{ if }\frac{2}{3}\leq \tau<1
\end{array}
\right.,
\quad
\ell_{21}(\tau)=\left\{\begin{array}{r}
1\mbox{ if }0\leq \tau< \frac{1}{3}\\[1mm]
\frac{1}{5}\mbox{ if }\frac{1}{3}\leq \tau<\frac{2}{3}\\[1mm]
\frac{1}{100}\mbox{ if }\frac{2}{3}\leq \tau<1
\end{array}\right..
\end{equation}
The monodromy matrix is given by
\begin{equation}
\label{PhiCL}
\Phi(T)=e^{\frac{T}{3}C}e^{\frac{T}{3}B}e^{\frac{T}{3}A},
\end{equation}
where the matrices $A$, $B$ and $C$ are defined by
\begin{equation}
\label{ABCmatrix}
\begin{array}{c}
A\!=\!\left[
\begin{array}{rr}
-m&\frac{m}{10}\\
m&-\frac{1}{10}-\frac{m}{10}
\end{array}
\right],
~
B\!=\!\left[
\begin{array}{rr}
-\frac{4}{5}-\frac{m}{5}&{2m}\\[1mm]
\frac{m}{5}&\frac{3}{2}-2m
\end{array}
\right],
~
C\!=\!\left[
\begin{array}{rr}
\frac{1}{2}-\frac{m}{100}&\frac{m}{100}\\
\frac{m}{100}&-2-\frac{m}{100}
\end{array}
\right].
\end{array}
\end{equation}
We have
$\chi=\frac{2}{3}$, $\overline{r}_1=-\frac{1}{10}$ and 
 $\overline{r}_2=-\frac{1}{5}$. Therefore, the patches are sinks and DIG occurs.

\begin{figure}[ht]
\begin{center}
\includegraphics[width=10cm,
viewport=160 510 440 670]{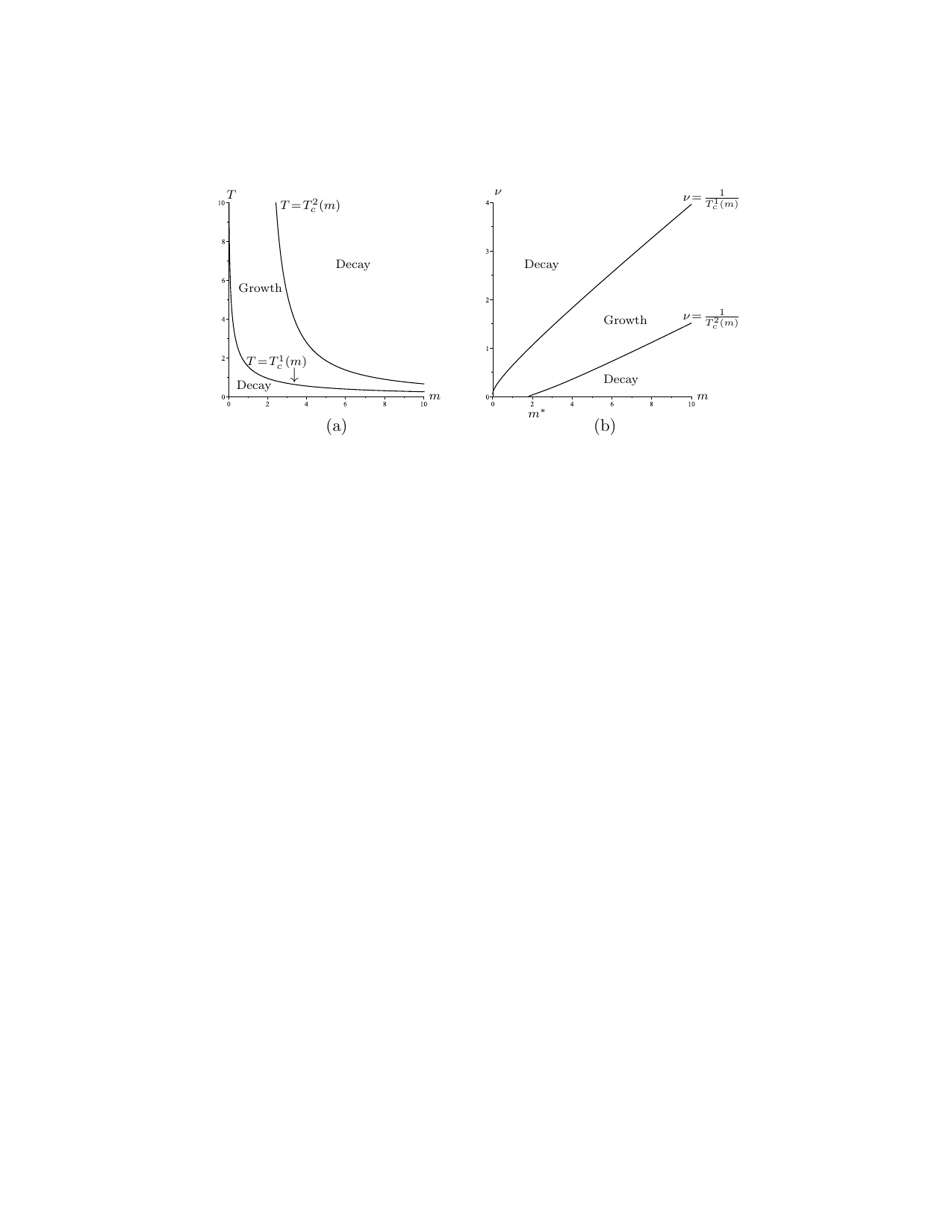}
\caption{(a) The set $\Lambda(m,T)=0$. (b)  The set $\Lambda(m,\nu)=0$. Here we use  the parameter values of \eqref{ABCmatrix} ($m^*=1.764$)}	\label{fig9}	
\end{center}
\end{figure} 
Note that $\Lambda(\infty,T)<\Lambda(\infty,0)$, as depicted in the supplementary material Table S3. Therefore, 
for $m$ large enough, the condition 
$\Lambda(m,T)>\Lambda(m,0)$, proved in \eqref{Lambda(m,0)<Lambda(m,T)} for time-independent migration, is not satisfied in this model with time-dependent migration.

We can compute the Perron root $\mu(m,T)$ of the matrix $\Phi(T)$ and plot the graph of  
$\Lambda(m,T)=\frac{1}{T}\ln(\mu(m,T))$, which is shown in Figure \ref{fig7}. For $m$ fixed, the function $T\mapsto \Lambda(m,T)$ can be increasing then decreasing. This behavior is more explicitly depicted in the in the supplementary material Figure S4. This figure shows the plot of the graphs of the  functions $T\mapsto \Lambda(m,T)$, for $m$ fixed and  also the graphs of the functions $m\mapsto \Lambda(m,T)$, for $T$ fixed. Hence we do not have a critical curve  $T=T_c(m)$, defined for $0<m<m^*$, such that growth occurs if and only if $T>T_c(m)$. 

The set of parameter values where {growth occurs} behaves as in Remark \ref{DIGNew}.
Indeed, there exist functions $T_c^1:(0,\infty)\to(0,\infty)$ and 
$T_c^2:(m^*,\infty)\to(0,\infty)$ such that 
growth occurs if and only if
$T_c^1(m)<T<T_c^2(m)$, see Figure \ref{fig9}(a). 
Therefore, DIG can occur for $m>m^*$. In Figure \ref{fig9}(b) we display the set where growth occurs in the $(m,\nu)$, where $\nu=1/T$. We observe that the critical curve $\nu=1/T_c^1(m)$ is tangent to the $\nu$-axis at the origin.

\subsubsection{Growth can also occur for $m>m^*$: a three-patch example}\label{ExempleESL}
Our objective in this section is to show numerical simulations with three patches that illustrate the non monotonicity of $\Lambda(m,T)$ with respect to $T$.
We consider the case where $\alpha=1/2$ and the matrices $A$ and $B$ in \eqref{PhiExplicite} are given by:   
\begin{equation}\label{AESL}
\begin{array}{c}
A_\varepsilon(m)=\left[
\begin{array}{ccc}
9-(10+\varepsilon_1) m&\varepsilon_2m&\varepsilon_3 m\\
10 m&-1-(\varepsilon_2+\varepsilon_4) m&\varepsilon_5 m\\
\varepsilon_1m&\varepsilon_4 m&-10-(\varepsilon_3+\varepsilon_5) m
\end{array}
\right],
\end{array}
\end{equation}
\begin{equation}\label{BESL}
\begin{array}{c}
B_\delta(m)=\left[
\begin{array}{ccc}
-10-(\delta_1+\delta_4)m&\delta_2m&10 m\\
\delta_1 m&-(10+\delta_2) m&\delta_3m\\
\delta_4m&10 m&9-(10+\delta_3) m
\end{array}
\right],
\end{array}
\end{equation}
with $\varepsilon=(\varepsilon_1,\varepsilon_2,\varepsilon_3,\varepsilon_4,\varepsilon_5)\geq 0$ and 
$\delta=(\delta_1,\delta_2,\delta_3,\delta_4)\geq 0$ such that the corresponding migration matrices  are irreducible.
Our theory applies to this example.
For these parameter values, we have
$\chi=9$ and $\overline{r}_1=\overline{r}_2= \overline{r}_3=-1/2$. Therefore all patches are sinks and DIG occurs. We have the following result

\begin{propo} \label{PropES} 
Let 
$\Lambda_{\varepsilon,\delta}(m,T)$ the growth rate corresponding to the piecewise constant system defined by the matrices \eqref{AESL} and \eqref{BESL}. For $\varepsilon$ and $\delta$ small enough, we have 
$$\Lambda_{\varepsilon,\delta}(1,0)<0,
\quad 
\Lambda_{\varepsilon,\delta}(1,\infty)<0,
\quad
\Lambda_{\varepsilon,\delta}(1,2)>0,$$
so that the function $T\mapsto \Lambda_{\varepsilon,\delta}(1,T)$ is not monotonous.
\end{propo}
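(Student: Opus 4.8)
The plan is to treat the three quantities $\Lambda_{\varepsilon,\delta}(1,0)$, $\Lambda_{\varepsilon,\delta}(1,\infty)$ and $\Lambda_{\varepsilon,\delta}(1,2)$ as continuous functions of $(\varepsilon,\delta)$ and to compute their values at the degenerate point $\varepsilon=\delta=0$, where the matrices \eqref{AESL} and \eqref{BESL} become explicit. Since the three inequalities to be proved are strict, it then suffices to establish them at $\varepsilon=\delta=0$: openness of strict inequalities together with continuity propagates them to a full neighborhood of the origin, inside which one may choose $\varepsilon,\delta$ with strictly positive entries, which forces every off-diagonal migration coefficient to be positive and hence both migration matrices to be irreducible, so that the standing hypotheses hold.

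Concretely, set $m=1$, $\alpha=\tfrac{1}{2}$, and write $A_0=A_0(1)$, $B_0=B_0(1)$ for the matrices obtained at $\varepsilon=\delta=0$. First I would record that, by Theorem \ref{thm1}, for irreducible data one has $\Lambda_{\varepsilon,\delta}(1,0)=\lambda_{max}\bigl(\tfrac{1}{2}(A_\varepsilon(1)+B_\delta(1))\bigr)$ and $\Lambda_{\varepsilon,\delta}(1,\infty)=\tfrac{1}{2}\lambda_{max}(A_\varepsilon(1))+\tfrac{1}{2}\lambda_{max}(B_\delta(1))$, while by Proposition \ref{Prop3} and \eqref{PhiExplicite}, $\Lambda_{\varepsilon,\delta}(1,2)=\tfrac{1}{2}\ln\mu$ with $\mu=\rho(e^{B_\delta(1)}e^{A_\varepsilon(1)})$ the Perron root of the monodromy matrix. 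All three right-hand sides depend continuously on $(\varepsilon,\delta)$, because the spectral abscissa $\lambda_{max}$ and the spectral radius $\rho$ are continuous functions of the matrix entries and the matrix exponential is continuous; hence they converge, as $(\varepsilon,\delta)\to 0$ through irreducible data, to their values at $\varepsilon=\delta=0$, even though irreducibility is lost in the limit. At that point one computes directly: $\tfrac{1}{2}(A_0+B_0)=-\tfrac{11}{2}I+5P$, where $P$ is the $3$-cycle permutation matrix (dominant eigenvalue $1$), so $\lambda_{max}=-\tfrac{11}{2}+5=-\tfrac{1}{2}<0$; and $A_0$, $B_0$ are block triangular with spectra $\{-1,-1,-10\}$ and $\{-1,-10,-10\}$, so each has $\lambda_{max}=-1$ and the slow limit equals $-1<0$.

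The remaining, and main, point is to show $\Lambda_{0,0}(1,2)>0$, i.e. $\rho(e^{B_0}e^{A_0})>1$. I would compute the two exponentials explicitly: $e^{A_0}$ is immediate from its $2\times 2$ nilpotent-plus-scalar block, and $e^{B_0}$ is obtained by integrating the cascade $x_2\to x_3\to x_1$, in which the resonance of the mode $e^{-10t}$ produces a term in $t\,e^{-10t}$. Rather than diagonalizing the product $\Phi=e^{B_0}e^{A_0}$, I would evaluate only its $(1,1)$ entry; the design of \eqref{AESL}--\eqref{BESL} makes $\Phi_{11}$ contain the term $\tfrac{1000}{81}e^{-2}\approx 1.67$, all other contributions being of order $e^{-11}$, so $\Phi_{11}>1$. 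Since $\Phi$ is a nonnegative matrix, its spectral radius dominates every diagonal entry (a $1\times 1$ principal submatrix), giving $\rho(\Phi)\geq\Phi_{11}>1$ and hence $\Lambda_{0,0}(1,2)>0$.

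Finally I would assemble the argument: by continuity there is $\eta>0$ such that $\Lambda_{\varepsilon,\delta}(1,0)<0$, $\Lambda_{\varepsilon,\delta}(1,\infty)<0$ and $\Lambda_{\varepsilon,\delta}(1,2)>0$ whenever $\|\varepsilon\|,\|\delta\|<\eta$; choosing such $\varepsilon,\delta$ with strictly positive entries guarantees irreducibility, so the proposition's hypotheses hold, and the map $T\mapsto\Lambda_{\varepsilon,\delta}(1,T)$, being negative in both the fast ($T\to 0$) and slow ($T\to\infty$) limits yet positive at $T=2$, cannot be monotone. The main obstacle is the explicit evaluation of $e^{B_0}$ with its resonant mode and the verification that $\Phi_{11}>1$; the nonnegativity/principal-submatrix bound is precisely what avoids having to compute the full spectrum of the product.
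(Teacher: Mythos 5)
Your proposal is correct, and it shares the paper's overall architecture: reduce to the degenerate point $\varepsilon=\delta=0$, express the three quantities via Theorem \ref{thm1} and Proposition \ref{Prop3} as $\lambda_{max}\bigl(\tfrac{1}{2}(A_0+B_0)\bigr)$, $\tfrac{1}{2}\bigl(\lambda_{max}(A_0)+\lambda_{max}(B_0)\bigr)$ and $\tfrac{1}{2}\ln\rho\bigl(e^{B_0}e^{A_0}\bigr)$, and propagate the strict inequalities to small irreducible $(\varepsilon,\delta)$ by continuity of the spectral abscissa and of the Perron root. Where you genuinely depart from the paper is in how the degenerate case is certified. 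The paper computes nothing: it observes that $A_0,B_0$ are exactly the matrices of Fainshil, Margaliot and Chigansky \cite{Fainshil}, constructed there as a counterexample to the GUAS conjecture for positive linear switched systems, and imports from that reference both the Hurwitz property of every matrix in ${\rm co}(A_0,B_0)$ and the (numerically found) eigenvalue $\mu\approx 1.669>1$ of $e^{A_0}e^{B_0}$. You instead verify everything by hand: the midpoint $\tfrac{1}{2}(A_0+B_0)=-\tfrac{11}{2}I+5P$ is a shifted cyclic permutation matrix, giving spectral abscissa $-\tfrac{1}{2}$; $A_0$ and $B_0$ are (block) triangular, giving spectral abscissa $-1$ for each; and the crucial inequality $\rho(e^{B_0}e^{A_0})>1$ follows from the bound $\rho(\Phi)\geq\Phi_{11}$ valid for nonnegative matrices, plus the explicit evaluation $\Phi_{11}=\tfrac{1000}{81}e^{-2}+\bigl(1-\tfrac{1000}{81}-\tfrac{1000}{9}\bigr)e^{-11}\approx 1.669>1$, which I checked and which is accurate (note also $\rho(e^{B_0}e^{A_0})=\rho(e^{A_0}e^{B_0})$, so the ordering mismatch with \cite{Fainshil} is immaterial). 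The paper's route buys brevity and a conceptual link to the switched-systems literature; yours buys a self-contained argument in which the cited numerical eigenvalue is replaced by a rigorous one-entry lower bound on the Perron root — arguably a strengthening, since $\mu\approx1.669$ in \cite{Fainshil} is a computation rather than a proved estimate.
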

\begin{proof}
For $m=1$, $\epsilon=0$ and $\delta=0$ we get $A_0(1)=A$ and $B_0(1)=B$, where $A$ and $B$ are given by
$$
\begin{array}{l}
A=\left[
\begin{array}{ccc}
-1 &0&0\\
10&-1&0\\
0&0&-10
\end{array}
\right],
\quad
B=\left[
\begin{array}{ccc}
-10&0&10\\
0&-10&0\\
0&10&-1
\end{array}
\right].
\end{array}
$$
These matrices have been proposed in \cite{Fainshil} as a counterexample to the conjecture that a PLS (\emph{positive linear switched system}) is GUAS (\emph{globally uniformly asymptotically
stable}) if every matrix in the convex hull of the matrices defining the subsystems of the PLS is Hurwitz, 
{i.e. its spectral abscissa is negative}. 
Indeed, it is proved in \cite{Fainshil} that every matrix in 
${\rm co}(A,B)=\{kA+(1-k)B:k\in[0,1]\}$
is Hurwitz and a calculation reveals that the matrix
$e^Ae^B$ has one real eigenvalue $\mu\approx  1.669>1$. Thus the PLS defined by the matrices $A$ and $B$ is not GUAS.
From these observations we deduce that 
\begin{align*}
&\lambda_{max}(A)<0,
\quad 
\lambda_{max}(B)<0,
\quad 
\textstyle
\lambda_{max}\left(\frac{A+B}{2}\right)<0,
\\
&\mbox{and the  Perron root of } e^Ae^B 
\mbox{ is stricly greater than 1}.
\end{align*}
 Using the continuity of the spectral abscissa and the continuity of the Perron root we deduce that
for $\varepsilon$ and $\delta$ small enough, we have
\begin{align*}
&\lambda_{max}\left(A_\varepsilon(1)\right)<0,
\quad 
\lambda_{max}\left(B_\delta(1)\right)<0,
\quad 
\textstyle
\lambda_{max}\left(\frac{A_\varepsilon(1)+B_\delta(1)}{2}\right)<0,
\\
&\mbox{and the  Perron root of } e^{A_\varepsilon(1)}e^{B_\delta(1)} 
\mbox{ is stricly greater than 1}.
\end{align*}
 Therefore, using \eqref{T=0} and \eqref{T=infini} we have 
\begin{align*}
&\Lambda_{\varepsilon,\delta}(1,0)=\textstyle
\lambda_{max}\left(\frac{A_\varepsilon(1)+B_\delta(1)}{2}\right)<0,\\
&
\Lambda_{\varepsilon,\delta}(1,\infty)=\textstyle
\frac{1}{2}\left(\lambda_{max}\left(A_\varepsilon(1)\right)+
\lambda_{max}\left(B_\delta(1)\right)\right)<0.
\end{align*}
Let $\mu$ be the Perron root of $e^{A_\varepsilon(1)}e^{B_\delta(1)}$. Using the definition \eqref{Lambda} of $\Lambda(m,T)$, we have
$\Lambda_{\varepsilon,\delta}(1,2)
=\frac{1}{2}\ln\left(\mu\right)>0.$
\end{proof}

\begin{figure}[ht]
\begin{center}
\includegraphics[width=10cm,
viewport=160 505 440 660]{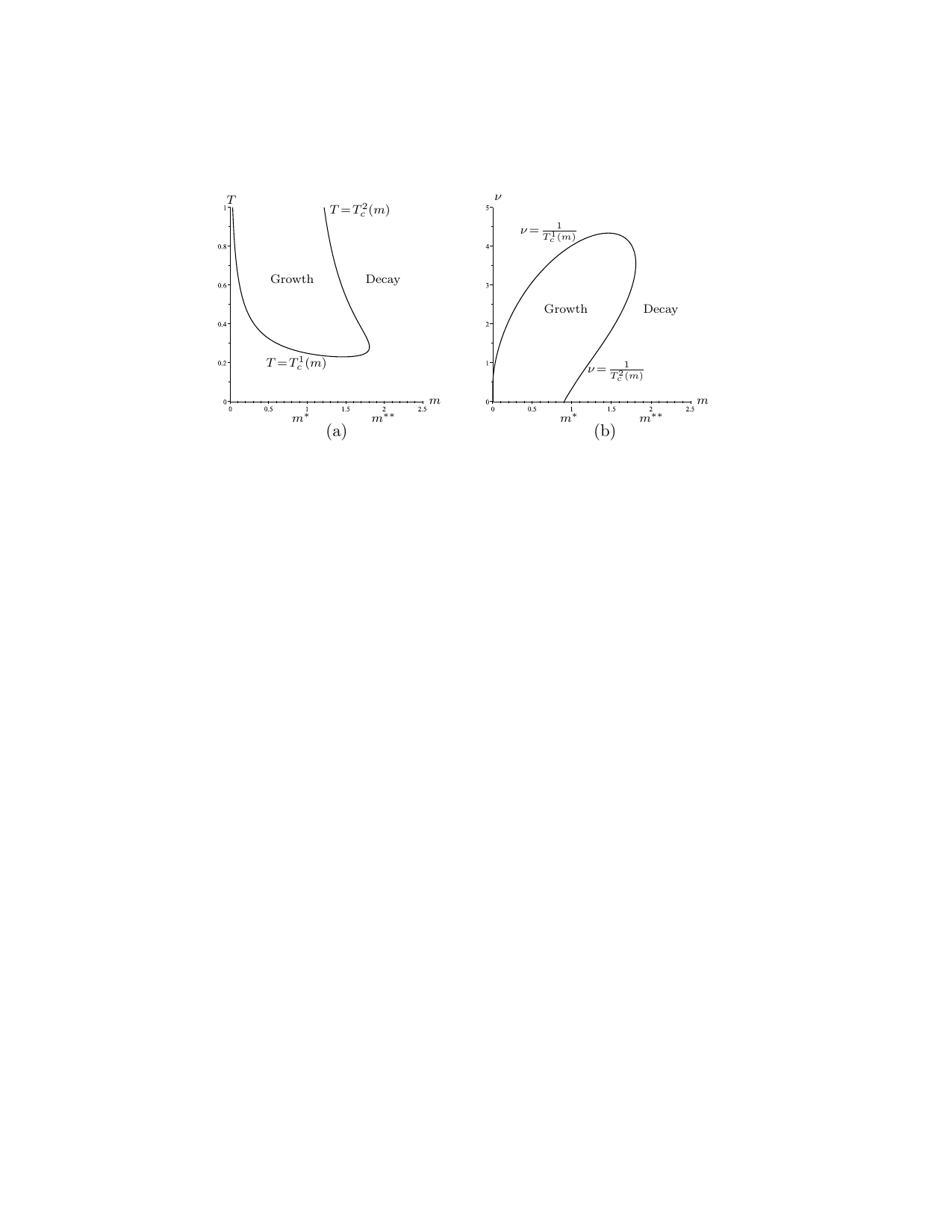}
\caption{(a) The set $\Lambda(m,T)=0$. (b)  The set $\Lambda(m,\nu)=0$. The parameter values are those of \eqref{ABESL0}. We have $m^*\approx0.904$ and $m^{**}\approx1.807$.}	\label{fig11}	
\end{center}
\end{figure} 

We show in the supplementary material Figure S5 the plot of $\Lambda_{\varepsilon,\delta}(m,T)$ for the particular choice of $\varepsilon$ and $\delta$ given by  
$$ \varepsilon_3=\varepsilon_4=0.1,\quad\varepsilon_1=\varepsilon_2=\varepsilon_5=0,\quad \delta_1=0.1,\quad\delta_2=\delta_3=\delta_4=0.$$
For this choice of $\varepsilon$ and $\delta$ the matrices $A$ and $B$ are written
\begin{equation}\label{ABESL0}
\begin{array}{c}
A=\left[
\begin{array}{ccc}
9-10 m&0&\frac{m}{10}\\
10 m&-1-\frac{m}{10}& 0\\
0&\frac{m}{10}&-10-\frac{m}{10}
\end{array}
\right],
\quad
B=\left[
\begin{array}{ccc}
-10-\frac{m}{10}&0&10 m\\
\frac{m}{10}&-10 m&0\\
0&10 m&9-10 m
\end{array}
\right].
\end{array}
\end{equation}
Therefore, the
migration matrix is irreducible and corresponds to the circular migrations
$$
\left\{
\begin{array}{l}
1 \stackrel{10}{\longrightarrow} 2
\stackrel{0.1}{\longrightarrow} 3
\stackrel{0.1}{\longrightarrow} 1,\qquad\mbox{for }\tau\in[0,1/2),\\
1 \stackrel{0.1}{\longrightarrow} 2
\stackrel{10}{\longrightarrow} 3
\stackrel{10}{\longrightarrow} 1,\qquad\mbox{for }\tau\in[1/2,1).
\end{array}
\right. 
$$
As shown in the supplementary material Table S3, we have $\Lambda(\infty,T)<\Lambda(\infty,0)$ and hence, for $m$ large enough we should have $\Lambda(m,T)<\Lambda(m,0)$, so that  $\Lambda(m,T)$ is not increasing with respect to $T$. Actually, the behavior predicted by Proposition \ref{PropES} occurs in this case since for $m=1$ the map $T\mapsto\Lambda(m,T)$ is increasing and then decreasing, 
see the supplementary material Figure S4(d).

The set of parameter values where growth occurs behaves as in Remark \ref{DIGNew}: there exists $m^{**}>m^*$ and functions $T_c^1:(0,m^{**})\to(0,\infty)$ and 
$T_c^2:(m^*,m^{**})\to(0,\infty)$ such that
$T_c^1(m^{**})=T_c^2(m^{**})$ and 
growth occurs if and only if
$T_c^1(m)<T<T_c^2(m)$, see Figure  \ref{fig11}(a). 
Therefore, growth can occur for $m>m^*$. In Figure \ref{fig11}(b) we display the set where growth occurs in the $(m,\nu)$, where $\nu=1/T$. We observe that the critical curve $\nu=1/T_c^1(m)$ is tangent to the $\nu$-axis at the origin.

\subsection{Time-dependent reducible migration}\label{INNsec}
The aim of this section is to relax Hypothesis \ref{H2}, i.e. the assumption that the migration matrix $L(\tau)$ is irreducible
for any $\tau\in[0,1]$. We have the following result.

\begin{propo}\label{Prop12}
Assume that Hypothesis \ref{H1} is satisfied and the monodromy matrix $\Phi(T)$ is positive. Suppose $m>0$ and $T>0$. Then \eqref{eq3} has a growth rate $\Lambda(m,T)$ which is given by 
$\Lambda(m,T)=\frac{1}{T}\ln(\mu(m,T)),$
where $\mu(m,T)$ is the Perron root of the monodromy matrix $\Phi(T)$. Moreover, for all $m>0$ and $T>0$ we have $\Lambda(m,T)\leq \chi$ where $\chi$ is defined by \eqref{chi} and for all $T>0$ we have $\lim_{m\to 0}\Lambda(m,T)=\max_i\overline{r}_i$.
\end{propo}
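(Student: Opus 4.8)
The plan is to observe that Hypothesis \ref{H2} (irreducibility of $L(\tau)$ for every $\tau$) enters the proofs of Proposition \ref{Prop3} and Theorem \ref{upperborneLambda} \emph{only} through the single conclusion that the monodromy matrix $\Phi(T)$ is positive, and then to re-run those arguments under the weaker standing assumption that $\Phi(T)>0$. For the first assertion I would begin by noting that, although $x(t)$ need not be strictly positive for small $t>0$ when $L(\tau)$ is reducible, the flow of the cooperative system \eqref{eq3} is still nonnegative and has positive diagonal. Hence, by $T$-periodicity, for $t\ge T$ one has $x(t)=\Phi(t-T)\,\Phi(T)x(0)\gg 0$ whenever $x(0)>0$, because $\Phi(T)x(0)\gg 0$ (all entries of $\Phi(T)$ being positive) and the diagonal entries of $\Phi(t-T)$ are positive. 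This makes the Lyapunov exponents $\Lambda[x_i]$ well defined. Since $\Phi(T)$ is positive, the Perron theorem (Theorem \ref{Ptheorem}) supplies a simple dominant eigenvalue $\mu(m,T)>0$ with positive eigenvector $\pi(m,T)$, and the argument of Proposition \ref{Prop3} — writing $x(kT)=\Phi(T)^k x(0)$ and using $\Phi(T)^k x(0)\sim c\,\mu(m,T)^k\pi(m,T)$ — applies verbatim to give $\Lambda[x_i]=\tfrac1T\ln\mu(m,T)$ for every $i$, independently of $x(0)$.

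For the bound $\Lambda(m,T)\le\chi$ I would simply point out that the proof of Theorem \ref{upperborneLambda} never uses irreducibility. Summing the differential inequalities $\frac{dx_i}{dt}\le r_{max}(t/T)x_i+m\sum_{j\neq i}\left(\ell_{ij}(t/T)x_j-\ell_{ji}(t/T)x_i\right)$ yields $\frac{d\rho}{dt}\le r_{max}(t/T)\rho$ for $\rho=\sum_i x_i$, and combining this with the identity $\Lambda[x_i]=\Lambda(m,T)$ just established gives $\Lambda(m,T)\le\overline{r_{max}}=\chi$. The argument carries over word for word, so only the first step (positivity of $\Phi(T)$) had to be supplied.

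The slow-migration limit is the one place where the boundary value $m=0$ destroys positivity, and I expect it to be the only genuinely delicate point. My plan is to argue by continuity of the spectral radius rather than by Perron-eigenvector perturbation. For $m>0$ the matrix $\Phi(T)$ is positive, so its Perron root $\mu(m,T)$ coincides with its spectral radius $\rho(\Phi(T))$. The map $m\mapsto\Phi(T)$ is analytic, hence continuous, and the spectral radius is a continuous function of the matrix entries; therefore $\mu(m,T)\to\rho(\Phi_0(T))$ as $m\to 0$, where $\Phi_0(T)$ is the monodromy matrix of the decoupled ($m=0$) system. Since at $m=0$ equation \eqref{eq3} is diagonal, one has
\[
\Phi_0(T)=\mathrm{diag}\left(e^{T\overline r_1},\ldots,e^{T\overline r_n}\right),\qquad \rho(\Phi_0(T))=e^{T\max_i\overline r_i},
\]
and applying $\tfrac1T\ln(\cdot)$ gives $\lim_{m\to0}\Lambda(m,T)=\max_i\overline r_i$, which is exactly \eqref{m=0}. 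The main obstacle is to ensure that $\mu(m,T)$ is genuinely the spectral radius for $m>0$ and that no eigenvalue crossing invalidates the limit; both are settled by the continuity of $\rho(\cdot)$ on the space of matrices, which needs neither simplicity of the dominant eigenvalue nor irreducibility at the limit $m=0$.
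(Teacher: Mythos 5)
Your proposal is correct and takes essentially the same route as the paper, whose proof of Proposition \ref{Prop12} is exactly the observation that the argument for Proposition \ref{Prop3} uses only the positivity of $\Phi(T)$, that the bound of Theorem \ref{upperborneLambda} never invokes irreducibility, and that the limit \eqref{m=0} follows from continuous dependence of the solutions on $m$ together with continuity of the Perron root (spectral radius) at the reducible limit matrix. Your added details --- eventual strict positivity $x(t)\gg 0$ for $t\geq T$ via the nonnegativity and positive diagonal of $\Phi(t-T)$, and the power-method asymptotics $\Phi(T)^k x(0)\sim c\,\mu(m,T)^k\pi(m,T)$ --- are correct fillings-in of steps the paper leaves implicit.
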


\begin{proof}
The result on the existence of the growth rate (Proposition \ref{Prop3}) only uses the fact that the monodromy matrix  $\Phi(T)$ is positive. The bound of $\Lambda(m,T)$ by $\chi$ (Theorem \ref{upperborneLambda}) does not use any additional assumption. The limit \eqref{m=0} in Theorem \ref{thm1} follows from the continuous dependence of the solutions in the $m$ parameter. 
\end{proof}

Now we give examples showing that the DIG phenomenon occurs with $\chi>0$ as well as examples showing that the DIG phenomenon does not occur, even if $\chi>0$.

\begin{figure}[ht]
\begin{center}
\includegraphics[width=10cm,
viewport=155 550 425 740]{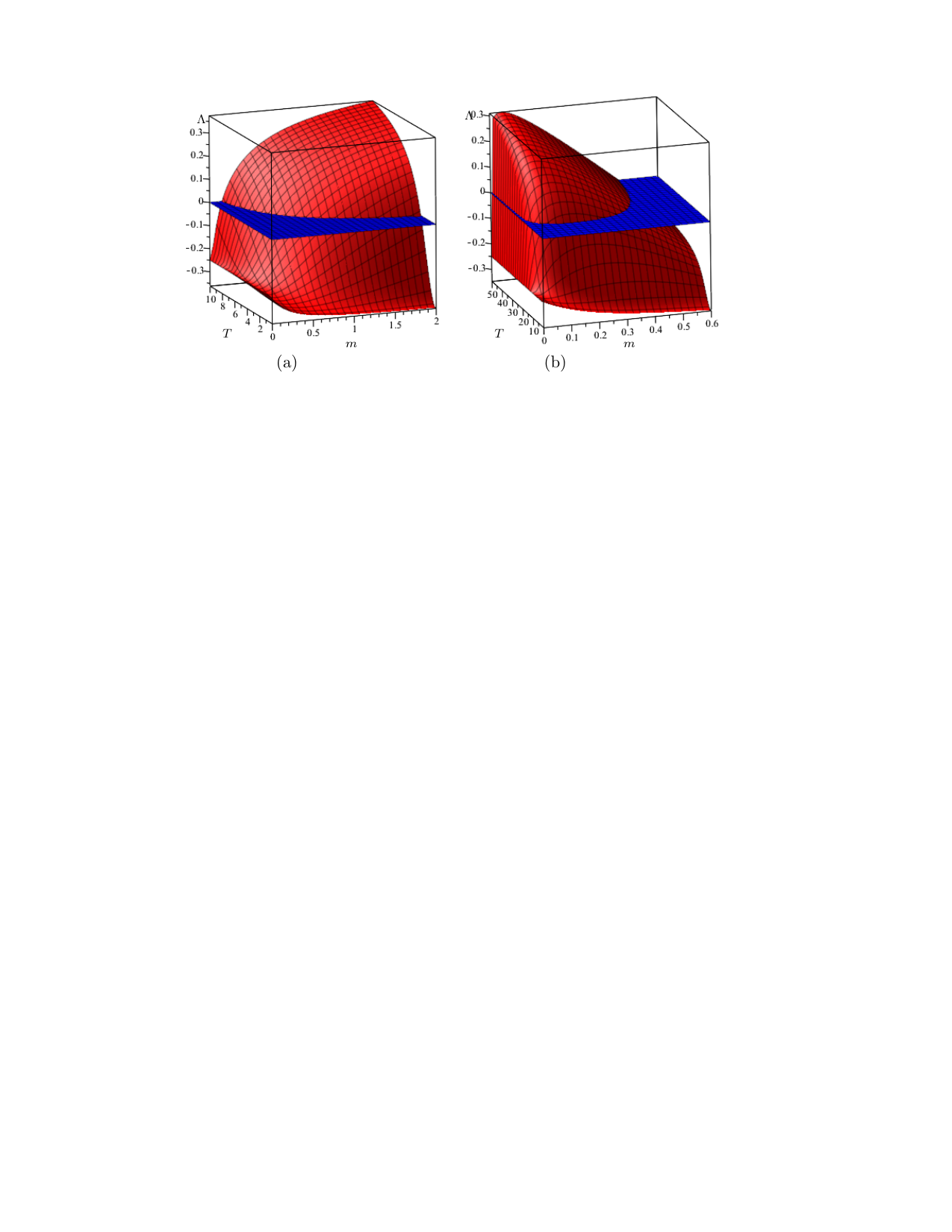}
\caption{(a) Unidirectional migration toward the favorable patch. (b) Unidirectional migration toward the unfavorable patch.
\label{fig17}}
\end{center}
\end{figure} 

\subsubsection{Unidirectional migration in a two patch case}\label{UDMig}
Let us consider again the example of two patches studied in Sections \ref{TIM} and \ref{NCM}, whose growth rates are given by \eqref{r1r2}. We have $\chi=1/2$, 
$\overline{r}_1=-1/4$ and 
 $\overline{r}_2=-1/2$. 
Therefore, the patches are sinks and, according to Theorem  \ref{DIGthm}, 
if the migration is in both directions, DIG occurs. Our aim is to consider a unidirectional migration and see if the DIG phenomenon continues to occur. 
First, let us consider the migration, defined by the matrix
$$L(\tau)=\left[\begin{array}{cc}
0&1\\
0&-1
\end{array}\right] 
\mbox{ for }\tau\in[0,1/2)\quad\mbox{ and }\quad  
L(\tau)=\left[\begin{array}{cc}
-1&0\\
1&0
\end{array}\right]
\mbox{ for }\tau\in[1/2,1).$$ 
The monodromy matrix is $\Phi(T)=e^{TA_2/2}e^{TA_1/2}$, where $A_1$ and $A_2$ are the matrices
$$
A_1=\left[
\begin{array}{cc}
1/2&m\\
0&-3/2-m
\end{array}
\right],
\quad
A_2=\left[
\begin{array}{cc}
-1-m&0\\
m&1/2
\end{array}
\right].
$$ 
The matrix $\Phi(T)$ is positive. From Proposition \ref{Prop12} we deduce that the growth rate exists and is given by  $\Lambda(m,T)=\frac{1}{T}\ln(\mu(m,T)),$
where $\mu(m,T)$ is the Perron root of the monodromy matrix $\Phi(T)$. Figure \ref{fig17}(a) shows the graph of $\Lambda(m,T)$: we see that DIG occurs.

In the previous example the migration is unidirectional from the patch where the local growth rate $r_i(\tau)$ is negative toward the patch where it is positive. Let us consider the opposite situation where the migration is unidirectional toward the patch where the local growth rate $r_i(\tau)$ is negative. For this purpose we use the migration matrix
$$L(\tau)=\left[\begin{array}{cc}
-1&0\\
1&0
\end{array}\right] 
\mbox{ for }\tau\in[0,1/2)\quad\mbox{ and }\quad  
L(\tau)=\left[\begin{array}{cc}
0&1\\
0&-1
\end{array}\right]
\mbox{ for }\tau\in[1/2,1).$$ 
The monodromy matrix is $\Phi(T)=e^{TA_2/2}e^{TA_1/2}$, where $A_1$ and $A_2$ are given by
$$
A_1=\left[
\begin{array}{cc}
1/2-m&0\\
m&-3/2
\end{array}
\right],
\quad
A_2=\left[
\begin{array}{cc}
-1&m\\
0&1/2-m
\end{array}
\right].
$$ 
The matrix  $\Phi(T)$
is positive. From Proposition \ref{Prop12} we deduce that the growth rate exists and is given by $\Lambda(m,T)=\frac{1}{T}\ln(\mu(m,T)),$
where $\mu(m,T)$ is the Perron root of the monodromy matrix $\Phi(T)$. Figure \ref{fig17}(b) shows the graph of $\Lambda(m,T)$: we see that DIG occurs.

\subsubsection{The threshold $\chi$ is positive, but DIG does not occur}\label{ExLobryNoDIG}

We consider the three-patch model described in Figure \ref{fig15}, where the growth rate is indicated in each patch. The migration is symmetric and is only between the patches where the growth rate is $b$. We assume that $a>0>b$ and $a+2b<0$. Therefore $\overline{r}_1=\overline{r}_2=\overline{r}_3=\frac{a+2b}{3}<0$ and $\chi=a>0$. Does the DIG phenomenon occurs for this system ?
\begin{figure}[ht]
\begin{center}
\includegraphics[width=10cm,
viewport=190 580 410 690]{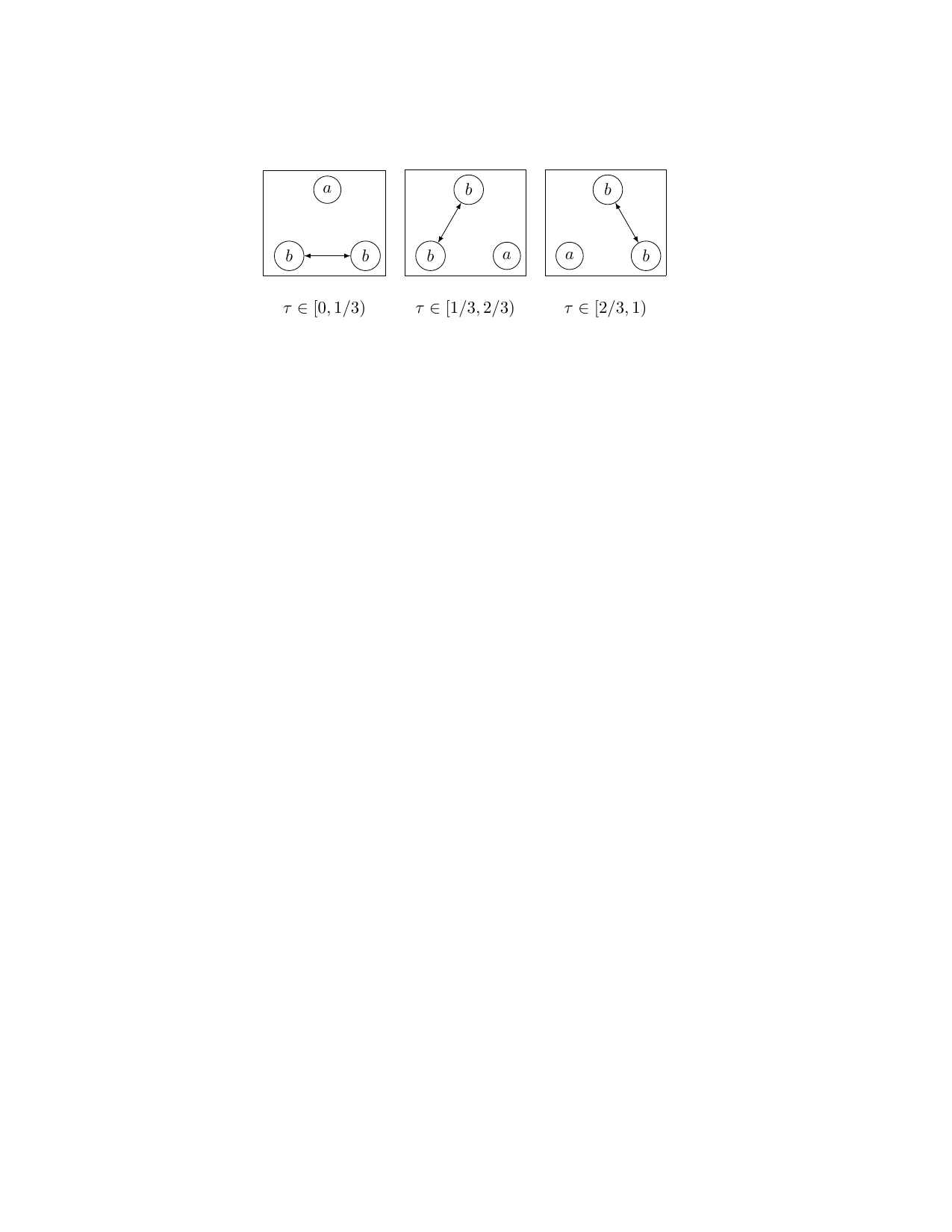}
\caption{The three-patch model.
\label{fig15}}
\end{center}
\end{figure}
\begin{figure}[ht]
\begin{center}
\includegraphics[width=10cm,
viewport=150 550 420 730]{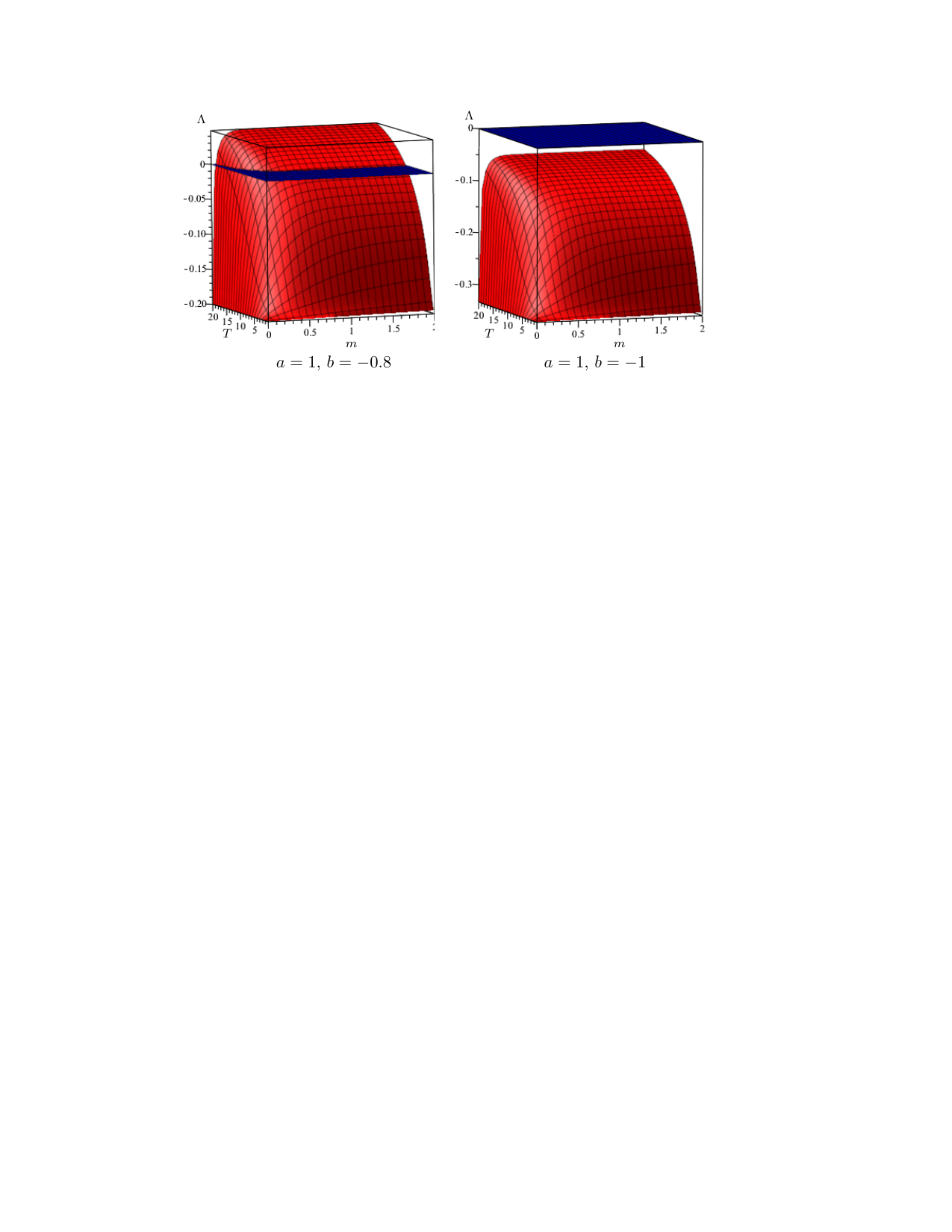}
\caption{If $a=1$ then DIG occurs for $-0.8\leq b<-0.5$ and does not occur for $b\leq -1$.
\label{fig16}}
\end{center}
\end{figure} 

The monodromy matrix is $\Phi(T)=e^{TA_3/3}e^{TA_2/3}e^{TA_1/3}$, where $A_1$, $A_2$  and $A_3$ are the matrices
$$
A_1=\left[\begin{array}{ccc}
a-m&m&0\\
m&a-m&0\\
0&0&b
\end{array}\right],
\quad
A_2=\left[\begin{array}{ccc}
a-m&0&m\\
0&b&0\\
m&0&a-m
\end{array}\right],
$$
$$
A_3=\left[\begin{array}{ccc}
b&0&0\\
0&a-m&m\\
0&m&a-m
\end{array}\right]
$$
The matrix  $\Phi(T)$
is positive. From Proposition \ref{Prop12} we deduce that the growth rate exists and is given by $\Lambda(m,T)=\frac{1}{T}\ln(\mu(m,T)),$
where $\mu(m,T)$ is the Perron root of the monodromy matrix $\Phi(T)$. Figure \ref{fig16} shows the graph of $\Lambda(m,T)$ for $a=1$ and $b=-1$ (right) in which DIG does not occur and $a=1$ $b=-0.8$ (left) in which DIG occurs. 
 Therefore, in this case $\chi$ is not equal to the upper limit of $\Lambda(m,T)$ as in the irreducible case, so that we can have $\chi>0$ but DIG does not occur. Determining the upper limit of $\Lambda(m,T)$ is an open problem, whose resolution would enable us to determine the value of the DIG threshold for this example.

\section{Proofs of the main results}
\label{IP}

We'll first give the proof of Proposition \ref{Prop3}, on the existence of the growth rate, followed by the proof of Theorem \ref{thm2}, which gives the integral formula for calculating this rate. This formula is then used in the proof of Theorem \ref{thm1}.

The existence of the growth rate  and its integral formula  follow from similar results on linear cooperative 1-periodic systems.  
Indeed, by changing time, we can return to the case where the period is 1 and make $T$ appear as a parameter of the system. 
The change of variables
$$t/T=\tau,\quad y(\tau)=x(T\tau),\quad Y(\tau)=X(T\tau)$$
transforms \eqref{eq3} and the corresponding matrix equation \eqref{FMA} into the equations
\begin{equation}\label{A(tau)}
  \frac{dy}{d\tau}=TA(\tau)y,\qquad \frac{dY}{d\tau}=TA(\tau)Y.  
\end{equation}
These equations are 1-periodic.
In order not to burden the presentation, the results on the existence of the growth rate for the system \eqref{A(tau)}, and the formulas that give it, are discussed in Appendix \ref{CLDE}. 
The main result of Appendix \ref{CLDE} is the Theorem \ref{Lambda(x)} which shows that the system \eqref{A(tau)} admits a growth rate. This theorem also provides two formulas to compute the growth rate, one using the Perron root  of the monodromy matrix $Y(1)=X(T)$ and the other using a periodic GAS solution of the equation associated with system \eqref{A(tau)} in the simplex $\Delta$. The existence and global asymptotic stability of this periodic solution is given in Proposition \ref{periodicPsi} in Appendix \ref{CLDE}. 
 
\subsection{Proof of Proposition \ref{Prop3}}\label{IP1}
We give the proof for the system \eqref{A(omega)}, see Remark \ref{notmoregeneral}.
\color{black}
Using Theorem \ref{Lambda(x)} of Appendix \ref{CLDE}, for any solution $y(\tau)$ of \eqref{A(tau)}, such that $y(0)>0$, we have
$$\lim_{\tau\to\infty}\frac{1}{\tau}\ln(y_i(\tau))=\ln(\mu(T)),$$
where $\mu(T)$ is the Perron root of the monodromy matrix $Y(1)=X(T)$. 
We use the notation $\mu(T)$ to recall the dependence of the Perron root on the parameter $T$ in the system \eqref{A(tau)}. Since
$$\lim_{\tau\to\infty}\frac{1}{\tau}\ln(y_i(\tau))=
\lim_{t\to\infty}\frac{1}{t/T}\ln(x_i(t))=T\lim_{t\to\infty}\frac{1}{t}\ln(x_i(t)),$$
we deduce that for all $i$, 
$\lim_{t\to\infty}\frac{1}{t}\ln(x_i(t))=\frac{1}{T}\ln(\mu(T)).$
Therefore, for all $i$, 
$\Lambda[x_i]=\Lambda(T)$, where $\Lambda(T)=\frac{1}{T}\ln(\mu(T))$. This proves \eqref{GR} or, equivalently, \eqref{Lambda}.

We prove now that
$\Lambda(T)$ is analytic in $T$. The monodromy matrix  $X(T)$ is analytic in $T$. Indeed, the solutions of the differential equation \eqref{A(tau)}, which is analytic in the parameter $T$ are analytic in this parameter. Hence $X(T)$ is analytic in $T$. So is its Perron root $\mu(T)$, since it is a simple root of the characteristic polynomial, see \cite{Brillinger}. Therefore $\Lambda(T)$ is analytic in $T$.

In the special case where $A(\tau)=R(\tau)+mL(\tau)$, the growth rate is also analytic in $m$, since the differential equation is analytic in $m$

\subsection{Proof of Theorem \ref{thm2}}\label{PT8}
\color{black}
Using the change of variable 
$$\tau=t/T,
\quad\sigma(\tau)=\rho(T\tau),\quad \eta(\tau)=\theta(T\tau),$$ 
the system (\ref{eqrhoT},\ref{eqthetaT}) becomes
\begin{equation}\label{eqrhoTtau}
\begin{array}{lcl}
\frac{d\sigma}{d\tau}&=&T\langle A(\tau)\eta,
{\bf 1}\rangle \sigma
\\
\frac{d\eta}{d\tau}&=&TA(\tau)\eta- T\langle A(\tau)\eta,
{\bf 1}\rangle \eta
\end{array}
\end{equation}
According to Proposition \ref{periodicPsi} in Appendix \ref{CLDE}, the second equation in \eqref{eqrhoTtau}, has a periodic solution, denoted $\eta^*(\tau,T)$ to emphasize its dependence on the parameter $T$, which is globally asymptotically stable. Recall that $\eta^*(\tau,T)$ is the solution of initial condition $\eta^*(0,T)=\pi(T)$, where $\pi(T)$ is the Perron vector of the monodromy matrix $X(T)$. Therefore, $\theta^*(t,T):=\eta^*(t/T,T)$ is a $T$-periodic solution of \eqref{eqthetaT}. It is globally asymptotically stable. As a consequence of Theorem \ref{Lambda(x)} of Appendix \ref{CLDE}, we have the formula \eqref{Lambda=Lambda[rho]1} for the growth rate $\Lambda(T)$ of the equation \eqref{A(omega)}.

\subsection{Proof of Theorem \ref{thm1}, item 1}\label{HFL}
We give the proof for the system \eqref{A(omega)}, see Remark \ref{notmoregeneral}.
Our aim is to determine the limit of $\Lambda(T)$ as $T\to 0$. We use the averaging method. Let us briefly recall the principle of this method. Let $x(t,a,\varepsilon)$ be the solution of the initial value problem 
\begin{equation}\label{av0}
\frac{dx}{dt}=f(t/\varepsilon,x),\quad x(0)=a,
\end{equation}
where $f(\tau,x)$ is 1-periodic in $\tau$, Lipschitz continuous in $x$ on some domain $D$, and continuous in $\tau$. Let $\overline{f}(x)=\int_0^1f(\tau,x)d\tau$ be the average of $f$. If $y(t,a)$ is the solution of the initial value problem
\begin{equation}\label{av1}
\frac{dy}{dt}=\overline{f}(y),\quad y(0)=a,
\end{equation}
then, as $\varepsilon\to 0$, we have $x(t,a,\varepsilon)=y(\varepsilon t,a)+o(1)$ uniformly for $t\in[0,1/\varepsilon]$, see \cite[Section 4.2]{SVM}. This result, known as the averaging theorem, was extended from the periodic case to the so called KBM vector fields (KBM stands for Krylov, Bogoliubov and Mitropolsky) for which the average $\overline{f}(x)=\lim_{T\to\infty}\frac{1}{T}\int_0^Tf(\tau,x)d\tau$ exists, see \cite{bogolioubov,mitropolsky,RoseauED}, \cite[Chapter 10]{Khalil} or \cite[Section 4.3]{SVM}. It was also extended in the random case, see \cite[Chapter 7]{freidlin}. More information on the development of the theory can be found in \cite[Section 4.1]{SVM} and \cite[Section 7.1]{freidlin}.
See also \cite[Chapter 12]{RoseauVib} where the averaging method is known as \emph{synchronization theory}. This book presents an interesting extension of the method to time-periodic vector fields with discontinuities in the time variable, which is usefull in our context, see \cite[Page 114]{RoseauVib}. This extension to the discontinuous case was proposed in \cite{haag}. Note also that the method of averaging applies with Caratheodory conditions, i.e. the vector field $f(\tau,x)$ is measurable in $\tau$ for all $x$ and continuous in $x$ for almost all $\tau$, and for differential equations in Banach spaces, see \cite[Chapter 7]{RoseauED}. In the following, we will give an averaging result with weaker assumptions than those of \cite{haag,RoseauVib,RoseauED}, but which are sufficient in our context.

\begin{theorem}\label{avthm}
Assume that $f(\tau,x)$ is 1-periodic in $\tau$, Lipschitz continuous in $x$, and piecewise continuous in $\tau$, with a finite number of discontinuities on $[0,1]$ and has left and right limits at the discontinuity points. Let $x(t,a,\varepsilon)$ be the solution of \eqref{av0}. Let $y(t,a)$ be the solution of the averaged system \eqref{av1}, which is assumed to exist at least on $[0,1]$. As $\varepsilon\to 0$, we have $x(t,a,\varepsilon)=y(\varepsilon t,a)+o(1)$ uniformly for $t\in[0,1/\varepsilon]$.
\end{theorem}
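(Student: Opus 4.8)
The plan is to reduce to the standard slow time scale and then run a Gronwall comparison between the true solution and the averaged one, controlling the oscillatory error period by period. By the usual time rescaling it suffices to prove the following compact-interval statement: if $\xi$ solves $\xi'(s)=f(s/\varepsilon,\xi(s))$ on $[0,1]$ with $\xi(0)=a$, then $\sup_{s\in[0,1]}|\xi(s)-y(s)|\to 0$ as $\varepsilon\to 0$, where $y$ is the solution of $\dot y=\overline f(y)$, $y(0)=a$; the assertion of the theorem is the translation of this back to the original time. Before anything else I would record an a priori bound: since $f$ is Lipschitz in $x$ with constant $K$ and bounded in $\tau$ (being piecewise continuous with one-sided limits), one has $|f(\tau,x)|\le C_0+K|x|$, and Gronwall's inequality gives a bound on $|\xi(s)|$ on $[0,1]$ that is uniform in $\varepsilon$. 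This confines $\xi$, and by hypothesis also $y$, to a fixed compact set $B$ on which $f$ is bounded by some $M$; this is what makes the constants below independent of $\varepsilon$.

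Writing both solutions in integral form and subtracting, I would split the difference as $\xi(s)-y(s)=I(s)+II(s)$, where $I(s)=\int_0^s\big[f(\sigma/\varepsilon,\xi(\sigma))-\overline f(\xi(\sigma))\big]\,d\sigma$ is the oscillatory term and $II(s)=\int_0^s\big[\overline f(\xi(\sigma))-\overline f(y(\sigma))\big]\,d\sigma$. Since $\overline f$ inherits the Lipschitz constant $K$, the term $II$ is immediately handled, $|II(s)|\le K\int_0^s|\xi-y|\,d\sigma$, so the whole argument reduces to showing $\sup_{s\in[0,1]}|I(s)|\to 0$.

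The heart of the proof, and the only place where the piecewise continuity in $\tau$ must be confronted, is the estimate of $I(s)$. I would partition $[0,s]$ by the points $s_k=k\varepsilon$, so that on each full subinterval the fast time $\sigma/\varepsilon$ sweeps exactly one period. Setting $g:=f-\overline f$, on $[s_k,s_{k+1}]$ I freeze the slow argument and write $\int_{s_k}^{s_{k+1}}g(\sigma/\varepsilon,\xi(\sigma))\,d\sigma=\int_{s_k}^{s_{k+1}}g(\sigma/\varepsilon,\xi(s_k))\,d\sigma+R_k$. After the substitution $\tau=\sigma/\varepsilon$ the first integral equals $\varepsilon\int_k^{k+1}g(\tau,\xi(s_k))\,d\tau=0$, because $g$ has zero mean over one period by the very definition of $\overline f$; crucially this uses only the integral of $g$ over a period and not its continuity, so the jumps of $f$ are harmless. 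The remainder is controlled by the Lipschitz constant of $g$ together with the slow variation $|\xi(\sigma)-\xi(s_k)|\le M\varepsilon$, giving $|R_k|\le 2KM\varepsilon^2$; summing the at most $1/\varepsilon$ full subintervals and adding a boundary piece of length $<\varepsilon$ yields $\sup_s|I(s)|\le\delta(\varepsilon)$ with $\delta(\varepsilon)=O(\varepsilon)$. Feeding this into $|\xi(s)-y(s)|\le\delta(\varepsilon)+K\int_0^s|\xi-y|\,d\sigma$ and applying Gronwall gives $\sup_{s\in[0,1]}|\xi(s)-y(s)|\le\delta(\varepsilon)e^{K}\to 0$, as required.

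The main obstacle is conceptual rather than computational: one must recognize that only the period-averages of $f$ enter the leading estimate, so that integrating over complete periods annihilates the oscillation regardless of the discontinuities, which is precisely why the continuity hypothesis of the classical averaging theorem can be weakened to mere piecewise continuity with one-sided limits. The one technical point requiring genuine care is the uniform-in-$\varepsilon$ a priori bound confining $\xi$ to the fixed compact set $B$, since without it the constants $M$ and $K$, and hence the rate $\delta(\varepsilon)=O(\varepsilon)$, could degenerate as $\varepsilon\to 0$.
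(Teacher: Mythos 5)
Your proposal cannot be compared against ``the paper's own proof'' for a simple reason: the paper never proves Theorem~\ref{avthm}. The theorem is stated immediately after a survey of the literature (the classical averaging theorem in \cite[Section 4.2]{SVM}, the discontinuous extensions of \cite{haag} and \cite{RoseauVib}, and the Carath\'eodory-type versions in \cite{RoseauED}) and is then used directly in the proof of Theorem~\ref{thm1}, item~1; the authors lean on those references rather than on a written argument. Your proof therefore supplies what the paper delegates to the literature, and it is correct: it is the standard proof of first-order periodic averaging (integral-form comparison, splitting into the oscillatory term $I(s)$ and the Lipschitz term $II(s)$, period-by-period cancellation, Gronwall), correctly adapted to the piecewise-continuous setting. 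The central observation --- that $\varepsilon\int_k^{k+1}g(\tau,\xi(s_k))\,d\tau=0$ uses only the integrability of $\tau\mapsto f(\tau,x)$ over one period and not its continuity, so finitely many jumps with one-sided limits are harmless --- is exactly the right one, and your bookkeeping ($|R_k|\le 2KM\varepsilon^2$ per period, at most $1/\varepsilon$ periods plus one boundary piece, then Gronwall) even yields the quantitative rate $O(\varepsilon)$ rather than the mere $o(1)$ asserted in the statement.

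Two caveats are worth recording. First, a scaling point that you silently repair: as literally written, the theorem pairs the fast-oscillation equation \eqref{av0}, $dx/dt=f(t/\varepsilon,x)$, with the conclusion $x(t,a,\varepsilon)=y(\varepsilon t,a)+o(1)$ uniformly on $[0,1/\varepsilon]$, and these are mutually inconsistent: for $f\equiv 1$ one has $x(t)=a+t$ while $y(\varepsilon t)=a+\varepsilon t$, so the difference is unbounded at $t=1/\varepsilon$. The consistent readings are either $x(t,a,\varepsilon)=y(t,a)+o(1)$ uniformly on $[0,1]$ for \eqref{av0} --- which is precisely your compact-interval statement --- or the stated conclusion for the slow form $dx/dt=\varepsilon f(t,x)$, which is what your rescaling $s=\varepsilon t$ actually addresses; the two are equivalent, and your argument proves both. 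So your sentence ``the assertion of the theorem is the translation of this back to the original time'' holds only for this corrected reading, and you should say so explicitly rather than leave the repair implicit. Second, your a priori bound $|f(\tau,x)|\le C_0+K|x|$ presumes the Lipschitz condition holds on all of $\mathbb{R}^n$; the discussion preceding the theorem has $f$ Lipschitz only ``on some domain $D$'', in which case one needs the usual continuation argument keeping $\xi$ in a compact neighborhood of the averaged trajectory $y$. This is immaterial both for the theorem as actually worded and for the paper's application (where the dynamics live on the invariant compact simplex $\Delta$), but it deserves a line.
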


We return now to the proof of Theorem \ref{thm1}, item 1. 
Using Theorem \ref{thm2}, the growth rate of the equation \eqref{A(omega)} is given by \eqref{Lambda=Lambda[rho]1},
where $\theta^*(t,T)$ is the $T$-periodic solution of the equation \eqref{eqthetaT}. For clarity we rewrite here this equation:
$$
\frac{d\theta}{dt}=A(t/T)\theta- \langle A(t/T)\theta,
{\bf 1}\rangle \theta.$$
Since $T\to 0$ we can apply the averaging method to this equation. 
Let $\theta(t,T)$ be the solution of this equation, with initial condition 
$\theta(0,T)=\theta_0$.
From the Theorem \ref{avthm} we deduce that, as $T\to0$, $\theta(t,T)$  is approximated by $\overline{\theta}(Tt)$, where $\overline{\theta}(t)$ is the solution of the averaged equation
\color{black}
\begin{equation}
\label{eqrhothetaAveraged}
\begin{array}{lcl}
\frac{d\theta}{dt}&=&\overline{A}\theta- \langle \overline{A}\theta,
{\bf 1}\rangle \theta,
\end{array}
\end{equation}
with the same initial condition $\overline{\theta}(0)=\theta_0$.
The averaged equation \eqref{eqrhothetaAveraged} has a globally asymptotically stable equilibrium in $\Delta$. 
Indeed, let $w=(w_1,\cdots,w_n)^\top$ be the
Perron-Frobenius vector of $\overline{A}$, i.e., the unique positive eigenvector corresponding to the eigenvalue $\lambda_{max}\left(\overline{A}\right)$ of the matrix $\overline{A}$, such that $\langle w,
{\bf 1}\rangle=1$. We have $w\in\Delta$ and  $w$ is the unique positive equilibrium of \eqref{eqrhothetaAveraged}. Using Proposition \ref{periodicPsi1} in Appendix \ref{CLDE}, $w$ is 
GAS for \eqref{eqrhothetaAveraged}  in the simplex $\Delta$. 
Since the averaged equation has an attractive equilibrium $w$, as $T\to 0$, the $T$-periodic solution $\theta^*(t,T)$ of the second equation in  \eqref{eqrhoTtau} converges toward $w$.
Hence,  using \eqref{Lambda=Lambda[rho]1}, as $T\to 0$, we have
$$\Lambda(T)=\int_0^{1}
\langle A(\tau)\theta^*(T\tau,T),{\bf 1}\rangle 
d\tau
=
\int_0^{1}
\langle A(\tau)w,{\bf 1}\rangle 
d\tau+o(1)
$$
Using
$\overline{A}w=\lambda_{max}\left(\overline{A}\right)w$ and $\langle w,{\bf 1}\rangle=1$,  we have
$$\textstyle
\int_0^1\langle A(\tau)w,{\bf 1}\rangle d\tau
=\langle \left(\int_0^1A(\tau)d\tau\right) w,{\bf 1}\rangle=
\langle \overline{A}w,{\bf 1}\rangle
=
\lambda_{max}\left(\overline{A}\right)\langle w,{\bf 1}\rangle 
=\lambda_{max}\left(\overline{A}\right).
$$  
Therefore, as $T\to 0$,  $\Lambda(T)=\lambda_{max}\left(\overline{A}\right)+o(1).$
This proves  \eqref{GR0} or, equivalently, \eqref{T=0}.

\subsection{Proof of Theorem \ref{thm1}, item 2}\label{LFL}
We give the proof for the system \eqref{A(omega)}, see Remark \ref{notmoregeneral}.
\color{black}
Our aim is to determine $\lim_{T\to \infty}\Lambda(T)$.
As shown by  \eqref{eqthetaT}, the equation on the simplex $\Delta$ is
\begin{equation}\label{zt}
\begin{array}{l}
\frac{d\theta}{dt}=A(\tau)\theta- \langle A(\tau )\theta,1\rangle\\[1mm]
\frac{d\tau}{dt}=\frac{1}{T}
\end{array}
\end{equation}
We use the change of variable $\tau=t/T$ and $\eta(\tau)=\theta(T\tau)$. The equation \eqref{zt} becomes
\begin{equation}\label{ztau}
\frac{1}{T}\frac{d\eta}{d\tau}=
A(\tau)\eta- \langle A(\tau )\eta,1\rangle\eta
\end{equation}
When $T\to\infty$  this is a singularly perturbed equation whose study is achieved using Tikhonov's theorem, see Appendix \ref{SecTikhonov}.
The systems \eqref{zt} and \eqref{ztau} are equivalent. The first one is written using the fast time $t$, while the second one is written using the slow time $\tau$. These systems have $n-1$ fast variables $\theta$ and one slow variable $\tau$. The fast dynamics, obtained from \eqref{zt} by letting $\frac{1}{T}=0$ is 
\begin{equation}
\label{zfast}
\begin{array}{l}
\frac{d\theta}{dt}
=A(\tau)\eta- \langle A(\tau )\eta,1\rangle\eta
\end{array}
\end{equation}
where $\tau$ is considered as a parameter. Let us prove that the hypotheses of the Proposition \ref{PropTikhonov} in Appendix \ref{SecTikhonov} are satisfied. The conditions H0' and H1 hold. It remains to prove that the condition H2' also hold. This is true since the fast equation \eqref{zfast} admits the Perron-Frobenius vector $v(\tau)$ of $A(\tau)$ as an equilibrium which is globally asymptotically stable in the simplex $\Delta$, see Proposition \ref{periodicPsi1} in Appendix \ref{CLDE}. 
Therefore, according to Proposition \ref{PropTikhonov} (see Remark \ref{Noslowvariable} following this proposition), the solution $\eta(\tau,T)$ of \eqref{ztau} is approximated by the slow curve $v(\tau)$. More precisely, for any $\nu>0$, as small as we want, as $T\to \infty$, we have
\begin{equation*}
\eta(\tau,T)=v(\tau)+o(1)
\mbox{ uniformly on }[0,1]\setminus \bigcup_{k=0}^p[\tau_k,\tau_k+\nu],
\end{equation*}
where $\tau_0=0$ and $\tau_k$, $1\leq k\leq p$, are the discontinuity points of $A(\tau)=R(\tau)+mL(\tau)$.
Hence the unique $T$-periodic solution $\eta^*(t,T)$ of the second equation in \eqref{eqrhoTtau} (see the proof of Theorem \ref{thm2}),  satisfies  
$$\eta^*(\tau,T)=v(\tau)+o(1)
\mbox{ uniformly on }[0,1]\setminus \bigcup_{k=1}^p[\tau_k,\tau_k+\nu].$$
From this formula and $\theta^*(T\tau,T)=\eta^*(\tau,T)$ we deduce that
$$\theta^*(T\tau,T)=v(\tau)+o(1)
\mbox{ uniformly on }[0,1]\setminus \bigcup_{i=1}^p[\tau_k,\tau_k+\nu].$$ 
Since $\nu$ can be chosen as small as we want, as $T\to\infty$, using \eqref{Lambda=Lambda[rho]1}, we have
$$\Lambda(T)=\int_0^{1}
\langle A(\tau)\theta^*(T\tau,T),{\bf 1}\rangle 
d\tau
=
\int_0^{1}
\langle A(\tau)v(\tau),{\bf 1}\rangle 
d\tau+o(1)\\[1mm]
$$
Using  $A(\tau)v(\tau)=\lambda_{max}(A(\tau)) v(\tau)$ and $\langle v(\tau),{\bf 1}\rangle=1$,  we have
$$
\begin{array}{ll}
\int_0^{1}
\langle A(\tau)v(\tau),{\bf 1}\rangle 
d\tau
=
\int_0^{1}
\lambda_{max}(A(\tau))\langle v(\tau),{\bf 1}\rangle 
d\tau
=
\int_0^{1}
\lambda_{max}(A(\tau)) 
d\tau.
\end{array}
$$  
Therefore, as $T\to\infty$, $\Lambda(T)=\int_0^{1}
\lambda_{max}(A(\tau))d\tau+o(1)=\overline{\lambda_{max}(A)}+o(1).$
This proves  \eqref{GRinfini} or, equivalently, \eqref{T=infini}.

The behavior of $\theta^*(T\tau,T)$ as $T\to\infty$ is illustrated in the supplementary material Figure S6. The figure shows that the approximation of $\theta^*(T\tau,T)$ by the Perron-Frobenius vector $v(\tau)$ is uniform except on the small intervals $[\tau_k,\tau_{k}+\nu]$, where $\tau_k$ is a discontinuity of $v(\tau$. In these thin layers, the solution jumps quickly from the left limit of $v(\tau)$ at $\tau_k$ to its right limit.

\subsection{Proof of Theorem \ref{thm1}, item 3}\label{SM}
The proof of \eqref{m=0} follows from the continuous dependence of the solutions of \eqref{eq3} in the parameter $m$.  
Let $x(t,m)$ be the solution of \eqref{eq3}
with initial condition 
$x(0,m)=x^0\geq 0$. 
Recall that the matrix $A(\tau)=R(\tau)+mL(\tau))$ has a finite number of discontinuity points $\tau_k$, $1\leq k\leq p$ in the interval $[0,1]$.
Using the continuous dependence of the solutions on the parameter $m$, in each sub interval $[\tau_k,\tau_{k+1}]$ on which the matrix $A(\tau)$ is continuous, we deduce that, as $m\to 0$, we have $x(t,m)=\xi(t)+o(1)$, uniformly for $t\in[0,T]$, where $\xi(t)$ is the solution, with initial condition 
$\xi(0)=x^0$,  of the diagonal system 
\begin{equation}
\label{eqm=0}
\frac{d\xi}{dt}=R(t/T)\xi,
\end{equation} 
obtained from \eqref{eq3} by letting $m=0$.
The solution $\xi(t)$ is given by
$$\xi_i(t)=x_i^0e^{\int_{0}^t r_i(s/T)ds}=x_i^0e^{T\int_{0}^{t/T} r_i(\tau)d\tau},\quad 1\leq i\leq n.$$  
Therefore, if $\Phi(m,T)$ is the monodromy  matrix  of \eqref{eq3},
as $m\to 0$, we have  
$$\lim_{m\to 0}\Phi(m,T)=
{\rm diag}(e^{T\overline{r}_1},\ldots,e^{T\overline{r}_n}),$$
where  the diagonal matrix is the fundamental matrix of \eqref{eqm=0}.
The dominant eigenvalue of this diagonal matrix is $e^{T\max_{1\leq i\leq n}\overline{r}_i}$. 
Using the continuity of the  Perron root \cite{Meyer}, we have
$\lim_{m\to 0}\mu(m,T)=e^{T\max_{1\leq i\leq n}\overline{r}_i}.$ 
Using \eqref{Lambda}, we have 
$\lim_{m\to 0}\Lambda(m,T)=
\lim_{m\to 0}\frac{1}{T}\ln(\mu(m,T))
=\max_{1\leq i\leq n}\overline{r}_i.$ This proves \eqref{m=0}.

\subsection{Proof of Theorem \ref{thm1}, item 4}\label{FM}
\color{black}

In this section we consider the following more general system
\begin{equation}\label{RmL}
\frac{dx}{dt}=R(t/T)+mL(t/T)
\end{equation}
where  the matrix $R$ is not assumed to be diagonal as in \eqref{eq3}.
We have the following result.

\begin{propo}\label{Propm=infini}
Assume that for any $\tau\in[0,1]$ the matrix $L(\tau)$ is Metzler irreducible and its columns sum to 0. Let 
$p(\tau)$ be the Perron-Frobenius vector of $L(\tau)$. 
Let $\Lambda(m,T)$ be the growth rate of \eqref{RmL}.
We have 
\begin{equation}\label{FMFormula}
\lim_{m\to\infty}\Lambda(m,T)=
\overline{\langle Rp,{\bf 1}\rangle}.
\end{equation}
\end{propo}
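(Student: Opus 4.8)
The plan is to reduce the statement to the integral representation of the growth rate and then run a singular perturbation argument as $m\to\infty$, exactly parallel to the proof of item 2 of Theorem \ref{thm1} in Section \ref{LFL}, with the migration matrix $L(\tau)$ now playing the role that $A(\tau)$ played there. First I would apply Theorem \ref{thm2} to the system \eqref{RmL} (whose matrix $A(\tau)=R(\tau)+mL(\tau)$ is irreducible Metzler for $m$ large, which is all that matters here). This gives
$$\Lambda(m,T)=\int_0^1\langle A(\tau)\theta^*(T\tau,m,T),{\bf 1}\rangle\,d\tau,$$
where $\theta^*(\cdot,m,T)$ is the globally asymptotically stable $T$-periodic solution of the simplex equation \eqref{eqthetaT}. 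Since the columns of $L(\tau)$ sum to zero we have ${\bf 1}^\top L(\tau)=0$, hence $\langle L(\tau)\theta,{\bf 1}\rangle=0$ for every $\theta$, and therefore the $mL$-term drops out of the integrand:
$$\Lambda(m,T)=\int_0^1\langle R(\tau)\theta^*(T\tau,m,T),{\bf 1}\rangle\,d\tau.$$
Everything then reduces to identifying the limit of $\theta^*(T\tau,m,T)$ as $m\to\infty$.

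Second, I would set up the singular perturbation. Writing the simplex equation in the slow time $\tau=t/T$ for $\eta(\tau)=\theta(T\tau)$ and using $\langle L\eta,{\bf 1}\rangle=0$ gives
$$\frac{1}{mT}\frac{d\eta}{d\tau}=L(\tau)\eta+\frac{1}{m}\left(R(\tau)\eta-\langle R(\tau)\eta,{\bf 1}\rangle\eta\right).$$
As $m\to\infty$ (with $T$ fixed) this is singularly perturbed with small parameter $1/(mT)$, the remaining $1/m$ term on the right being a uniformly small, uniformly Lipschitz regular perturbation that vanishes in the limit. The associated fast equation, with $\tau$ frozen, is the linear flow $\dot\eta=L(\tau)\eta$ restricted to the invariant simplex $\Delta$ (invariant because ${\bf 1}^\top L(\tau)=0$ and $L(\tau)$ is Metzler). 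By Lemma \ref{lm41} the matrix $L(\tau)$ is irreducible Metzler with $\lambda_{max}(L(\tau))=0$ simple, dominant eigenvector $p(\tau)$ and left eigenvector ${\bf 1}$, so Proposition \ref{periodicPsi1} applies and shows that $p(\tau)$ is globally asymptotically stable for this fast dynamics on $\Delta$. Thus the hypotheses H0', H1, H2' of Proposition \ref{PropTikhonov} hold, with slow manifold $\eta=p(\tau)$ and no genuine slow variable (cf. Remark \ref{Noslowvariable}).

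Third, Tikhonov's theorem (Proposition \ref{PropTikhonov}) then yields, for every $\nu>0$ and as $m\to\infty$,
$$\theta^*(T\tau,m,T)=p(\tau)+o(1)\quad\text{uniformly on }[0,1]\setminus\bigcup_k[\tau_k,\tau_k+\nu],$$
where the $\tau_k$ are the finitely many discontinuity points of $L(\tau)$. Since $\theta^*$ stays in $\Delta$ it is uniformly bounded, the exceptional layers have total length tending to $0$ with $\nu$, and $R(\tau)$ is integrable; hence I can pass to the limit under the integral sign to obtain
$$\lim_{m\to\infty}\Lambda(m,T)=\int_0^1\langle R(\tau)p(\tau),{\bf 1}\rangle\,d\tau=\overline{\langle Rp,{\bf 1}\rangle},$$
which is \eqref{FMFormula}. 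As a consistency check, for diagonal $R=\mathrm{diag}(r_i)$ this is $\sum_i\overline{p_ir_i}$, recovering \eqref{m=infini}.

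The main obstacle is the singular perturbation step: one must verify that the convergence of $\theta^*$ to the slow manifold $p(\tau)$ is strong enough, that is, uniform away from the thin layers near the discontinuities of $L(\tau)$, to justify interchanging the limit with the integral, and that having the small parameter appear both in the singular scaling and as a regular $O(1/m)$ term on the right-hand side does not affect the applicability of Proposition \ref{PropTikhonov}. Both points are handled exactly as in the proof of item 2 in Section \ref{LFL}, which is why I would present this proof as its direct analogue.
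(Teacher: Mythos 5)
Your proof is correct and follows essentially the same route as the paper's: the integral formula from Theorem \ref{thm2} with the $mL$-term eliminated via ${\bf 1}^\top L(\tau)=0$, the rescaled simplex equation treated as a singular perturbation in $1/m$ with fast dynamics $\dot\eta=L(\tau)\eta$ and GAS equilibrium $p(\tau)$ (Proposition \ref{periodicPsi1}), and Proposition \ref{PropTikhonov} with Remark \ref{Noslowvariable} to pass to the limit in the integral. The only differences are cosmetic (you carry the small parameter as $1/(mT)$ where the paper keeps $T$ on the right-hand side), so there is nothing to add.
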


 \begin{proof}
 The equation on the simplex $\Delta$ is
 \begin{equation}\label{eqt0}
\frac{d\theta}{dt}=R(t/T)\theta+mL(t/T)\theta-\langle
R(t/T)\theta,{\bf 1}\rangle\theta-
m\langle L(t/T)\theta,{\bf 1}\rangle\theta,
\end{equation}
Since the columns of $L(\tau)$ sum to 0 we have 
$\langle L(\tau)\theta,{\bf 1}\rangle=0$. Therefore, using the variables $\tau=t/T$ and $\eta(\tau)=\theta(T\tau)$, this equation is written
\begin{equation}\label{zt0}
\frac{d\eta}{d\tau}=TR(\tau)\eta+TmL(\tau)\eta-T\langle
R(\tau)\eta,{\bf 1}\rangle\eta,
\end{equation}
Dividing by $m$ we obtain
\begin{equation}\label{ztau0}
\frac{1}{m}\frac{d\eta}{d\tau}=TL(\tau)\eta
+\frac{1}{m}\left[TR(\tau)\eta-T\langle
R(\tau)\eta,{\bf 1}\rangle\eta\right].
\end{equation}
When $m\to\infty$  this is a singularly perturbed equation with $n-1$ fast variables $\theta$ and one slow variable $\tau$. 
Using the fast time $s=m\tau$, this equation is written
\begin{equation}\label{zs0}
\begin{array}{l}
\frac{d\eta}{ds}=TL(\tau)\eta
+\frac{1}{m}\left[TR(\tau)\eta-T\langle
R(\tau)\eta,{\bf 1}\rangle\eta\right],\\[1mm]
\frac{d\tau}{ds}=\frac{1}{m}
\end{array}
\end{equation}
Therefore, the fast dynamics, obtained by letting $\frac{1}{m}=0$ in 
\eqref{zs0} is
\begin{equation}
\label{zfast0}
\begin{array}{l}
\frac{d\eta}{ds}
=TL(\tau)\eta,
\end{array}
\end{equation}
where $\tau$ is considered as a parameter. 
 Let us prove that the hypotheses of the Proposition \ref{PropTikhonov} in Appendix \ref{SecTikhonov} are satisfied. The conditions H0' and H1 hold. It remains to prove that the condition H2' also hold. This is true since the Perron-Frobenius vector $p(\tau)$ of $L(\tau)$, is the unique positive equilibrium of \eqref{zfast0} and is GAS in the simplex $\Delta$, as shown in the Proposition \ref{periodicPsi1} in Appendix \ref{CLDE}. 
Therefore, according to Proposition \ref{PropTikhonov} (see Remark \ref{Noslowvariable} following this proposition), the solution $\theta(\tau,m)$ of \eqref{ztau0} is approximated by the slow curve $p(\tau)$. More precisely, for any $\nu>0$, as small as we want, as $T\to \infty$, we have
\begin{equation*}
\eta(\tau,T)=p(\tau)+o(1)
\mbox{ uniformly on }[0,1]\setminus \bigcup_{k=0}^p[\tau_k,\tau_k+\nu],
\end{equation*}
where $\tau_0=0$ and $\tau_k$, $1\leq k\leq p$, are the discontinuity points of $A(\tau)=R(\tau)+mL(\tau)$.
Hence the unique $T$-periodic solution $\theta^*(t,T)$ of \eqref{eqt0}  satisfies  
$$\theta^*(T\tau,m)=p(\tau)+o(1)
\mbox{ uniformly on }[0,1]\setminus \bigcup_{i=1}^p[\tau_k,\tau_k+\nu].$$ 
From \eqref{Lambda=Lambda[rho]1}, we have
$\Lambda(m,T)=\int_0^{1}
\langle R(\tau)\theta^*(T\tau,T),{\bf 1}\rangle 
d\tau$.
Since $\nu$ can be chosen as small as we want, as $m\to\infty$, we have
$$
\begin{array}{l}
\int_0^{1}\langle R(\tau)\theta^*(T\tau,T),{\bf 1}\rangle d\tau
=
\int_0^{1}\langle R(\tau)p(\tau),{\bf 1}\rangle d\tau+o(1)
=\overline{\langle Rp,{\bf 1}\rangle}+o(1).
\end{array}
$$
Therefore, as $m\to\infty$, $\Lambda(m,T)=
\overline{\langle Rp,{\bf 1}\rangle}+o(1)$. 
\end{proof}
In the particular case of \eqref{eq3}, where $R(\tau)={\rm diag}(r_1(\tau),\cdots,r_n(\tau))$ is diagonal, the formula \eqref{FMFormula} becomes 
$\lim_{m\to\infty}\Lambda(m,T)=\sum_{i=1}^n\overline{p_ir_i}.$ 
This proves \eqref{m=infini}.

\subsection{Proof of Proposition \ref{PropDL}}
\label{ProofPropDL}
We use the following result.

\begin{lem}
\label{lem:regularity-lambdamax}
{Let $B$ and $H$ two Metzler matrices.} For $\varepsilon\geq 0$, the function $\varepsilon\mapsto\lambda_{max}(\varepsilon):=\lambda_{max}(B+\varepsilon H)$ is continuous in a neighborhood of 0. 
If furthermore $B$ is irreducible, then 
$\varepsilon\mapsto\lambda_{max}(\varepsilon)$ is differentiable at $\varepsilon=0$, and
\begin{equation}\label{lambdamax'}
\lambda_{max}'(0)= y^\top Hx,
\end{equation}
where $x$ and $y$ are respectively the right- and left eigenvectors of $B$ associated to $\lambda_{max}(B)$
such that $1^\top x = 1$ and $y^\top x = 1$. 
\end{lem}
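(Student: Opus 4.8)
The plan is to handle the two claims separately: continuity reduces to the continuity of the Perron root of a nonnegative matrix, while differentiability and the formula \eqref{lambdamax'} come from the perturbation theory of a \emph{simple} eigenvalue.

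For continuity, I would first observe that since $H$ is Metzler, $\varepsilon H$ has nonnegative off-diagonal entries for $\varepsilon\geq 0$, so $B+\varepsilon H$ stays Metzler. On a compact interval $[0,\varepsilon_0]$ I would pick $c>0$ with $N(\varepsilon):=B+\varepsilon H+cI\geq 0$, so that $\lambda_{max}(\varepsilon)=\rho(N(\varepsilon))-c$, where $\rho$ denotes the spectral radius. Since the entries of $N(\varepsilon)$ are affine in $\varepsilon$ and the Perron root of a nonnegative matrix depends continuously on its entries, continuity of $\varepsilon\mapsto\lambda_{max}(\varepsilon)$ near $0$ follows.

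Next, assuming $B$ irreducible, I would note that for $\varepsilon\geq 0$ the off-diagonal entries of $B+\varepsilon H$ dominate those of $B$, so $B+\varepsilon H$ is again irreducible Metzler; by the Perron-Frobenius Theorem \ref{PFtheorem}, $\lambda_{max}(\varepsilon)$ is then simple and \emph{strictly} exceeds the real part of every other eigenvalue, with $x\gg 0$ and $y\gg 0$ the right/left eigenvectors at $\varepsilon=0$. I would then apply the implicit function theorem to $P(\lambda,\varepsilon)=\det(\lambda I-B-\varepsilon H)$: as $\lambda_{max}(B)$ is a simple root, $\partial_\lambda P(\lambda_{max}(B),0)\neq 0$, yielding a unique analytic branch $\mu(\varepsilon)$ with $\mu(0)=\lambda_{max}(B)$ on a two-sided neighbourhood of $0$, together with an analytic eigenvector $x(\varepsilon)$ normalised so that $x(0)=x$.

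Finally, I would identify $\mu(\varepsilon)=\lambda_{max}(\varepsilon)$ for $\varepsilon\geq 0$ and then read off the derivative. For the identification, the strict real-part dominance at $\varepsilon=0$ together with continuity of the spectrum shows that $\mu(\varepsilon)$ remains the eigenvalue of maximal real part for small $\varepsilon$, hence equals the spectral abscissa $\lambda_{max}(\varepsilon)$ of the irreducible Metzler matrix $B+\varepsilon H$; thus $\lambda_{max}$ inherits the differentiability of $\mu$ at $0$. For the formula, I would differentiate $(B+\varepsilon H)x(\varepsilon)=\mu(\varepsilon)x(\varepsilon)$ at $\varepsilon=0$ to get $Hx+Bx'(0)=\mu'(0)x+\lambda_{max}(B)x'(0)$, left-multiply by $y^\top$, and use $y^\top B=\lambda_{max}(B)y^\top$ to cancel the $x'(0)$ terms, leaving $y^\top Hx=\mu'(0)\,y^\top x$; with the normalisation $y^\top x=1$ this is exactly $\lambda_{max}'(0)=y^\top Hx$. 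The main obstacle is this identification step: one must make sure the analytic branch singled out by the implicit function theorem is the one carrying the spectral abscissa for $\varepsilon\geq 0$, which is precisely where irreducibility enters through the strict spectral dominance in the Perron-Frobenius theorem.
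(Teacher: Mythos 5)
Your proposal is correct and follows essentially the same route as the paper: the paper obtains continuity by the same diagonal-shift trick (applying the continuity of the Perron root of nonnegative matrices, \cite{Meyer}, to $B+\varepsilon H+\eta Id$), and for the derivative it simply cites \cite[Theorem 6.3.12]{HJ}, whose standard proof is precisely your simple-eigenvalue perturbation argument (implicit function theorem on the characteristic polynomial, differentiation of the eigenvalue equation, left multiplication by $y^\top$). The only added value in your write-up is that you make explicit the identification of the analytic branch with the spectral abscissa for small $\varepsilon\geq 0$ via the Perron--Frobenius strict dominance, a point the paper leaves implicit in the citation.
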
{
\begin{proof}[Proof of Lemma \ref{lem:regularity-lambdamax}]

The continuity of the Perron root for matrices with non-negative entries is given in  \cite[Theorem 3.1]{Meyer}, in the irreducible case, and \cite[Theorem 3.2]{Meyer}, in the reducible case. The continuity of the spectral abscissa of $B+\varepsilon H$ follows from this result applied to the matrix $B+\varepsilon H+\eta Id$, where $\eta\geq 0$ is such that $B+\varepsilon H+\eta Id$ has non-negative entries for $\varepsilon\geq 0$ small enough.
A proof of the formula for the derivative can be found in \cite[Theorem 6.3.12]{HJ}.
\color{black}
\end{proof}}

Using $\Lambda(m,0)=\lambda_{max}(\overline{R}+m\overline{L})$, $B=\overline{R}$ and $H=\overline{L}$, we deduce that
$$
\lim_{m\to 0}\Lambda(m,0)=
\lambda_{max}(\overline{R})=\max_{1\leq i\leq n}\overline{r}_i.$$

Using $\Lambda(m,\infty)=\int_0^1\lambda_{max}(R(\tau)+mL(\tau))d\tau$, $B={R}(\tau)$ and $H=L(\tau)$, we deduce that
$$
\lim_{m\to 0}\Lambda(m,\infty)=
\int_0^1\lambda_{max}(R(\tau))d\tau=
\int_0^1{\max}_{1\leq i\leq n}(r_i(\tau))d\tau=
\chi.
$$
Note that we can exchange the limit and the integral by dominated convergence, since for all $\tau \in [0,1]$, 
\begin{equation}\label{min<L<max}
 \min_i r_i(\tau) \leq \lambda_{\max}( R(\tau)  + m L(\tau)) \leq \max_i r_i(\tau).   
\end{equation}
Now, using \eqref{lambdamax'}, 
$B=\overline{L}$  and $H=\overline{R}$, and the fact that $\lambda_{max}(\overline{L})=0$, $x=q$ and $y=1$, we have
\begin{equation*}
\begin{array}{lcl}
\lambda_{max}(\overline{R} + m\overline{L}) &=& 
m\lambda_{max}\left(\frac{1}{m}\overline{R}+ \overline{L}\right)\\
&=&
m\left(\lambda_{max}(\overline{L})+\frac{1}{m}1^\top \overline{R}q+o\left(\frac{1}{m}\right)\right)\\
&=&\sum_{i=1}^nq_i\overline{r}_i+o(1). 
\end{array}
\end{equation*}
Therefore,
$$\textstyle\lim_{m\to\infty}\Lambda(m,0)=\lim_{m\to\infty}\lambda_{max}(\overline{R} + m\overline{L})= 
\sum_{i=1}^nq_i\overline{r}_i.$$
Similarly, using \eqref{lambdamax'}, 
$B=L(\tau)$  and $H=R(\tau)$, and the fact that $\lambda_{max}(L(\tau))=0$, $x=p(\tau)$ and $y=1$, we have
\begin{equation*}
\begin{array}{lcl}
\lambda_{max}(L(\tau)+ mL(\tau)) &=& 
m\lambda_{max}\left(\frac{1}{m}R(\tau)+L(\tau)\right)
\\&=&m\left(\lambda_{max}(L(\tau))+\frac{1}{m}1^\top R(\tau)p(\tau)+o\left(\frac{1}{m}\right)\right)\\&=&
\sum_{i=1}^np_i(\tau)r_i(\tau)+o(1). 
\end{array}
\end{equation*}
Therefore, we obtain
$$
\begin{array}{lcl}
\lim_{m\to\infty}\Lambda(m,\infty)&=&\int_0^1\lim_{m\to\infty}\lambda_{max}({R}(\tau) + mL(\tau))d\tau\\
&=& 
\int_0^1\sum_{i=1}^np_i(\tau){r}_i(\tau)d\tau
= \sum_{i=1}^n\overline{p_ir_i}.
\end{array}
$$
Note that we can also exchange the limit and the integral by using \eqref{min<L<max} and dominated convergence.

By \cite[Theorem 1.1]{chen}, for any diagonal  matrix $S={\rm diag}(s_1,\ldots,s_n)$, and Metzler irreducible matrix $K$ whose columns sum to 0, we have
$$\frac{d}{dm}\lambda_{max}(S+mK)\leq \lambda_{max}(K)=0,\qquad \frac{d^2}{dm^2}\lambda_{max}(S+mK)\geq 0,$$
and the equality holds if and only if $s_1=\ldots=s_n$. 

Using $\Lambda(m,0)=\lambda_{max}(\overline{R}+m\overline{L})$, $S=\overline{R}$ and $K=\overline{L}$ we deduce that \eqref{Lambda(m,0)convex} is true and the equality holds if and only if $\overline{r}_1=\ldots=\overline{r}_n$. Similarly, using $\Lambda(m,\infty)=\int_0^1\lambda_{max}(R(\tau)+mL(\tau))d\tau$, $S={R}(\tau)$ and $K={L}(\tau)$ we deduce that \eqref{Lambda(m,infini)convex} is true and the equality holds if and only if ${r}_1(\tau)=\ldots={r}_n(\tau)$ for all $\tau$.

Finally, we prove \eqref{Lambda(m,0)<Lambda(m,T)}. For $T > 0$, and $t \geq 0$, let $C(t)=A(t/T) = R(t/T) + m L$ (recall that we have assumed here that the migration is constant). Then, $C$ is a $T$ - periodic function, with constant off-diagonal entries. By \cite[Theorem II.5.3]{Mierczynski}, 
\[
\Lambda(m,T) \geq \lambda_{\max}( \overline C),
\]
where
\[
\overline C = \frac{1}{T} \int_0^T C(t) dt = \overline R + m L.
\]
By \eqref{T=0}, $\lambda_{\max}( \overline C) = \Lambda(m,0)$, which concludes the proof.

\section{Discussion}\label{Discussion}

We have considered the non-autonomous linear differential system 
\begin{equation} \label{mod}
\frac{dx}{dt} = R(t/T)x + mL(t/T) x,  
\quad x = (x_1,\cdots,x_n)\in \mathbb{R}^n
\end{equation}
where
\begin{itemize}
\item $x_i(t)$ is the size of the population at time $t$ on the $i$th path,
\item $ s \mapsto R(s) $ is a  1-periodic diagonal matrix representing the growth rate on each patch,
\item $ s \mapsto L(s)$  is a 1-periodic  migration matrix  (i.e. the sum of each column is $0$).
\end{itemize}
We assumed that
\begin{itemize}
\item \textbf{$H_1$}  the functions  $ s \mapsto R(s) $ and $ s \mapsto L(s)$ are piecewise continuous,
\item \textbf{$H_2$} for every $s$, the matrix $L(s)$ is irreducible.
\end{itemize}

Our first point (see Proposition \ref{Prop3}) was to prove that, as soon as $m$ is positive 
$$
\lim_{t \to \infty}\frac{1}{t}\ln(x_i(t)) = \Lambda(m,T) :=  \frac{1}{T}\ln(\mu(m,T))
$$
where $\mu(m,T)$ is the Perron root of the monodromy matrix associated to \eqref{mod}.
Thus the asymptotic growth rate is the same on every patch, which is not surprising since we assumed that the migration matrix $L(s)$ is irreducible which means that the different patches are mixed altogether. Nevertheless this result needs a proof, either in the symmetric constant migration case considered by Katriel, (see  \cite[Eq. (20)]{Katriel}), either in our case 
which relies on properties of cooperative linear 1-periodic systems (see Appendix  \ref{CLDE}).

We have adopted Katriel's definition (see  \cite{Katriel}) of \\
\textbf{Dispersal Induced Growth}:  {\em One says that {\it{dispersal-induced growth}} (DIG) occurs 
if all patches are sinks (i.e. $\int_0^1{r}_i(s)ds<0,\;  i = 1,\cdots,n$), but  $\Lambda(m,T)>0$ for some values of $m$ and $T$}. 
We also adopted his index
 $$\chi =\int_0^1\max_i r_i(s)ds. $$

 This index  can be interpreted as the average growth rate in a kind of {\em idealized habitat} whose growth rate at any time is that of the habitat with {\em maximal growth} at this time. We have proved that DIG occurs if, and only if, $\chi > 0$, that is to say, {\em if the ideal habitat is a source}. This is our answer to the question addressed in the title.\\

To obtain this result we first 
characterized the set of parameters  $m$ and $T$ for which $\Lambda(m,T)>0$ (see Proposition \ref{PropDIG}). We have also proved the existence of the limits 
$$ \lim_{m \to 0} \;\Lambda(m,T) \quad  \lim_{m \to \infty } \;\Lambda(m,T)\quad  \lim_{T \to 0 } \;\Lambda(m,T)\quad  \lim_{T \to \infty } \;\Lambda(m,T)$$
and have analyzed their asymptotics for large/small $m$ and $T$ in Section \ref{asymptotics}.  We have shown  that the picture  depends on the sign of the quantity 
$$ \alpha = \sum_{i=1}^n \int_0^1p_i(s)r_i(s)ds$$
where $p_i(s)$ is the Perron vector of the matrix $L(s)$. 

\begin{itemize}
\item 
If $\alpha <0$,  then the equation $\Lambda(m,\infty)=0$ has a unique solution $m=m^*>0$, and  we have	
	\begin{itemize}
	\item If $m\in (0,m^*)$ then for any $T$ sufficiently large (depending on $m$), we have 
		$\Lambda(m,T)>0$ (growth) and for any $T$ sufficiently small (depending on $m$), we have 
		$\Lambda(m,T)<0$ (decay).
	\item If $m\geq m^*$ then for any $T$ sufficiently small or sufficiently large (depending on $m$), we have $\Lambda(m,T)<0$ (decay).  
	\end{itemize}	
\item If $\alpha \geq 0$,  then  $\Lambda(m,\infty)>0$ for all $m> 0$ and for any $T$ sufficiently large (depending on $m$), we have 
		$\Lambda(m,T)>0$ (growth) and for any $T$ sufficiently small (depending on $m$), we have 
		$\Lambda(m,T)<0$ (decay).
\end{itemize}

Our proofs rely mostly on a change of variable (appendix  \ref{CLDE}) which is familiar to people interested by growth in ``structured populations''. Here the ``structure'' is given by the patch where the population is located and instead of considering the size $x_i$ of the population on each patch we consider the total population $\rho  = \sum_{i=1}^n x_i$ and the proportion (``frequency'') $\theta_i = x_i/\rho$ on each patch. In these new variables $(\rho,\theta)$ the system has nice properties. It turns out that the system of  $\theta$ variables  is non linear, but {\em independent} of $\rho$.
We  prove then  that it has a globally asymptotically stable periodic solution $\theta^*$  from which we deduce a  general expression for the global growth rate as an integral along $\theta^*$ (see Section \ref{ReductionSimplex}). Moreover, on this $\theta$ system we can apply Tikhonov's theorem from which we deduce our large-$T$  asymptotics of $\Lambda(m,T)$.

The possibility that $R(s)$ and $L(s)$ have discontinuities is not a simple desire for mathematical generality. 
 {Indeed, it opens the way to thinking, for example in the piecewise constant case, about stochastic models (PDMP) as we sketched in the Section \ref{SE}.
Note that even if we assume in Section \ref{SE} that $R$ and $L$ are continuous, this does not contradict the fact that in our deterministic study we found it important to extend the results to the non-continuous case, since for a realization $t \mapsto \omega_t$ of many reasonable stochastic processes, $t \mapsto R(\omega_t) +mL(\omega_t)$ is discontinuous. 
  We refer to \cite{benaim} and \cite{BLSSarXiv} for further analysis of these issues.}

As one sees  we recover all results of Katriel  \cite{Katriel} except 
an important one which {plays} a significant role in the proofs of the main results of \cite{Katriel}. It says   that for all $m>0$, the function $T\mapsto\Lambda(m,T)$ is strictly increasing (except in the case where all the $r_i$'s are equals) (see  \cite[Lemma 2]{Katriel}), from which the existence of the curve $T_c(m)$ is deduced. This result follows from general results of Liu et al. \cite{liu} on the principal eigenvalue of a periodic linear system. Indeed, the growth rate $\Lambda(m,T)$ can be seen as a principal eigenvalue of a linear periodic problem, to which the results of \cite{liu} apply, see \cite[Section 3.1]{Katriel}.
But, as we have shown in Section \ref{NCM}   the monotonicity of $T\mapsto \Lambda(m,T)$ is no longer true if $L(s)$ is not constant. We conjecture that it is true in the non symmetric constant case but we are unable to prove it, even in the constant and symmetric case, with our methods.

 As indicated in the title, in the present article we concentrated on the case where all patches are sinks: the ``all-sink case''. We have not considered the ``source-sink'' case (where some patches are sources) as in \cite{Katriel}. Our methods clearly apply to this case and we do not expect big surprises in this direction but the analysis remains to be done. Since the persistence in the case of density dependent models depends on the linearized system at the origin our results are pertinent in these cases. 
As the concepts of ``source'' and ``sink'' are also relevant for structured populations, it would be relevant to study the existence of DIG in this more complex case.

But before looking  to these generalizations it seems to us that a question pointed by an anonymous referee deserves prior attention: the assumption that for all $t$'s the migration matrix is irreducible is certainly not realized in many real systems. For instance on a two-patches system with two seasons we can imagine that during
the first season there is migration from patch one to patch two and conversely from patch two to one during season two, like migrating birds do between places in north or south. In this case the migration matrix $L(s)$ is not irreducible but DIG is still observed as we have shown on an example (see Section \ref{UDMig}). But, for more than two patches, it might append that DIG is not present as shown by the example in Section \ref{ExLobryNoDIG}. What are conditions for DIG when $L(s)$ is not irreducible ? This is a question worth asking.

\color{black}

\appendix
\section{The Perron Frobenius theorem}\label{PFT}
The Perron theorem, implies the following result.

\begin{theorem}\label{Ptheorem}
Let $X$ be a matrix with positive entries. 
The matrix $X$ has a unique positive real eigenvalue, denoted $\mu$ and a unique corresponding eigenvector, denoted $\pi$, called respectively the \emph{Perron root} and the \emph{Perron vector}, such that
\begin{equation}\label{PerronVector}
\textstyle
X\pi=\mu\pi,\quad
\quad \pi_i > 0
\quad\mbox{and}\quad\|\pi\|_1=\sum_{i=1}^n\pi_i=1.
\end{equation}
Moreover
\begin{equation}\label{thelimit}
\lim_{k\to\infty}(X/\mu)^k = G,
\end{equation}
where $G$ is the projector onto the null space 
$N(M)$ along the range $R(M)$ of the matrix $M=X-\mu I$. 

As a consequence of this limit, for ll $x\neq 0$ in $\mathbb{R}_+^n$ we have
\begin{equation}\label{Conv}
\lim_{k\to\infty}\frac{X^kx}{\|X^kx\|_1} = \pi.
\end{equation}
\color{black}
\end{theorem}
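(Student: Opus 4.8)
The plan is to prove this classical Perron theorem in the standard way, using the strict positivity of $X$ at the two places where it is indispensable. First I would obtain the eigenpair by a fixed-point argument. The map $x\mapsto Xx/\|Xx\|_1$ is continuous on the simplex $\Delta=\{x\geq 0:\|x\|_1=1\}$ and sends it into itself, so Brouwer's theorem yields a fixed point $\pi\in\Delta$, that is $X\pi=\mu\pi$ with $\mu=\|X\pi\|_1>0$. Since $X$ has positive entries and $\pi>0$, the vector $\pi=X\pi/\mu$ is in fact $\gg0$, which already gives \eqref{PerronVector}.

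Next I would show that $\mu$ equals the spectral radius $\rho(X)$ and is the \emph{only} eigenvalue of modulus $\rho(X)$. If $Xz=\lambda z$ with $z\neq0$, then componentwise $|\lambda|\,|z|=|Xz|\leq X|z|$, whence $|\lambda|\leq\rho(X)$ and so $\mu=\rho(X)$. Setting $w=X|z|-\mu|z|\geq0$ in the case $|\lambda|=\mu$, strict positivity forces either $w=0$ or $Xw\gg0$; in the latter case $u:=X|z|\gg0$ satisfies $Xu\gg\mu u$, which forces $\rho(X)>\mu$, a contradiction. Hence $X|z|=\mu|z|$ with $|z|\gg0$, and equality in $|Xz|\leq X|z|$ with $X$ positive forces all entries of $z$ to share a common phase, so $\lambda=\mu$. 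This strict-dominance step is where I expect the main difficulty, since it is exactly where positivity rather than mere non-negativity is essential: for reducible or merely non-negative matrices several eigenvalues may lie on the critical circle.

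I would then establish that $\mu$ is a simple eigenvalue. For geometric simplicity, if $Xy=\mu y$ with $y$ real and not proportional to $\pi$, I choose $t$ so that $\pi-ty\geq0$ but has a vanishing entry; since $\pi-ty$ is again a $\mu$-eigenvector, applying $X$ gives $\gg0$ on the left and a zero entry on the right unless $\pi-ty=0$, a contradiction, so the eigenspace is $\mathrm{span}(\pi)$. For algebraic simplicity, applying the existence step to $X^\top$ gives a left Perron vector $y\gg0$ with $y^\top X=\mu y^\top$; were there a Jordan chain $(X-\mu I)u=\pi$, then $0=y^\top(X-\mu I)u=y^\top\pi>0$, a contradiction. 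Thus $\mu$ is simple, and in particular $\mu$ is semisimple, so $N(M)\oplus R(M)=\mathbb{R}^n$ with $M=X-\mu I$.

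Finally, strict dominance together with simplicity imply, through the Jordan (spectral) decomposition of $X/\mu$, that $(X/\mu)^k$ converges to the spectral projector $G$ onto $N(M)=\mathrm{span}(\pi)$ along $R(M)$, which is \eqref{thelimit}; explicitly $G=\pi y^\top/(y^\top\pi)$. For the consequence \eqref{Conv}, given any $x>0$ I write $(X/\mu)^k x\to Gx=\frac{y^\top x}{y^\top\pi}\,\pi$, and note that the scalar $\frac{y^\top x}{y^\top\pi}$ is strictly positive because $y\gg0$ and $x>0$. Dividing by its $\|\cdot\|_1$-norm and using $\|\pi\|_1=1$ then yields $\frac{X^kx}{\|X^kx\|_1}\to\pi$.
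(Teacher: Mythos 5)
Your proof is correct in substance, but it bears essentially no relation to what the paper does, because the paper offers no proof of this statement at all: Theorem \ref{Ptheorem} is presented in Appendix \ref{PFT} as a direct quotation of the classical Perron theorem, with the reader referred to Meyer's book \cite{MeyerBook} for details, and even the explicit formula $G=\pi y^\top/(y^\top\pi)$ for the Perron projection is mentioned there only with the remark that it will not be used. What you supply instead is a self-contained, first-principles proof along classical Wielandt lines: Brouwer's fixed-point theorem on the simplex for existence of the positive eigenpair, a strict-dominance argument to show that $\mu=\rho(X)$ is the only eigenvalue on the spectral circle, the ``subtract $t$ times the other eigenvector until an entry vanishes'' argument for geometric simplicity, the left Perron vector to exclude Jordan chains, and the spectral decomposition of $X/\mu$ (eigenvalue $1$ simple, all other eigenvalues inside the unit disc) to obtain \eqref{thelimit} and deduce \eqref{Conv}. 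Your route buys self-containedness at the cost of roughly a page; the paper's choice buys brevity, which is the appropriate trade-off for an appendix whose only purpose is to fix the objects $\mu$, $\pi$, $G$ used elsewhere.

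One point in your argument needs reordering to be airtight. In your second step you write that $|\lambda|\,|z|\leq X|z|$ ``whence $|\lambda|\leq\rho(X)$ and so $\mu=\rho(X)$'': the first conclusion is trivially true for any eigenvalue, and the second does not follow from it, so at that moment the identity $\mu=\rho(X)$ is unjustified --- yet the ensuing contradiction argument (``which forces $\rho(X)>\mu$, a contradiction'') leans on exactly that identity. The repair uses material you yourself introduce two paragraphs later: apply the Brouwer step to $X^\top$ first, obtaining $y\gg 0$ with $y^\top X=\mu' y^\top$; pairing with $\pi$ gives $\mu'=\mu$ because $y^\top\pi>0$; then pairing the subinvariance inequality $|\lambda|\,|z|\leq X|z|$ with $y$ gives $|\lambda|\,y^\top|z|\leq \mu\,y^\top|z|$, hence $|\lambda|\leq\mu$ for every eigenvalue $\lambda$, i.e.\ $\mu=\rho(X)$. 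With the left eigenvector moved to the front, the rest of your proof goes through exactly as written.
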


The matrix $G$ is called the \emph{Perron projection}. We have the explicit formula $G=vw^\top/w^\top v$, where $v$ and $w^\top$ are left and right positive eigenvectors of $X$, with eigenvalue $\mu$, i.e. $Xv=\mu v$ and $w^\top X=\mu w^\top$. 

{However, this explicit formula will not be used in this paper}. 
For details and complements, see  \cite{MeyerBook}.

The Perron Frobenius theorem extends the Perron theorem to irreducible matrices with nonnegative entries, see  \cite{MeyerBook}. This theorem implies the following result.

 \begin{theorem}\label{PFtheorem}
Let $A$ be an irreducible Metzler matrix (i.e. the matrix $A$ has off diagonal non-negative entries).
The matrix $A$ has a unique real eigenvalue, denoted $\lambda_{max}(A)$ and a unique corresponding eigenvector, denoted $u$, called respectively the \emph{Perron-Frobenius  root} and the \emph{Perron-Frobenius vector}, such that
\begin{equation}\label{PFVector}
\textstyle
Ap=\lambda_{max}p,\quad
\quad p_i > 0
\quad\mbox{and}\quad\|p\|_1=\sum_{i=1}^np_i=1.
\end{equation}
Moreover any other eigenvalue $\lambda$ of $A$ satisfies $\Re(\lambda)<\lambda_{max}(A)$. 
\end{theorem}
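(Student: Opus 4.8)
The plan is to reduce the statement to the classical Perron--Frobenius theorem for irreducible matrices with nonnegative entries (the result cited from \cite{MeyerBook} immediately above) by means of a diagonal shift. First I would fix a constant $c>0$ large enough that $c+a_{ii}\ge 0$ for every $i$, and set $B=A+cI$. Since $A$ is Metzler, its off-diagonal entries are already nonnegative and are left unchanged by the shift, while the diagonal entries $a_{ii}+c$ become nonnegative; hence $B$ is a nonnegative matrix. Moreover the shift does not alter the off-diagonal zero pattern of $A$, so $B$ is irreducible precisely because $A$ is. This is what permits me to apply the Perron--Frobenius theorem to $B$.

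Applying that theorem to $B$ yields its spectral radius $r=\rho(B)>0$ as an algebraically simple eigenvalue, the Perron root, together with a positive eigenvector; normalizing it in the $1$-norm produces a unique $p\gg 0$ with $Bp=rp$ and $\|p\|_1=1$, and every eigenvalue $\mu$ of $B$ satisfies $|\mu|\le r$. I would then translate these facts back to $A$ through the spectral correspondence $\mu\mapsto\mu-c$: the eigenvalues of $A$ are exactly those of $B$ shifted by $-c$, with the same eigenvectors, so $\lambda_{max}(A):=r-c$ is a simple real eigenvalue of $A$ carrying the same positive, $1$-normalized eigenvector $p$, i.e. $Ap=\lambda_{max}(A)p$. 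Uniqueness of the eigenvector follows from the algebraic (hence geometric) simplicity of $r$, and uniqueness of the real eigenvalue admitting a positive eigenvector is again part of the cited Perron--Frobenius statement.

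It remains to establish the strict inequality $\Re(\lambda)<\lambda_{max}(A)$ for every other eigenvalue $\lambda$ of $A$, which is equivalent to showing $\Re(\mu)<r$ for every eigenvalue $\mu\ne r$ of $B$. Suppose to the contrary that $\Re(\mu)\ge r$ for some eigenvalue $\mu$. Combining this with $|\mu|\le r$ gives $r\le \Re(\mu)\le|\mu|\le r$, so all of these quantities coincide; then $|\mu|=\Re(\mu)$ forces $\Im(\mu)=0$ and hence $\mu=r$. Thus no eigenvalue other than $r$ can have real part $\ge r$, and subtracting $c$ yields the claim for $A$.

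I do not expect a serious obstacle here, since the result is classical and the argument is essentially a bookkeeping reduction. The two points that warrant care are, first, that the diagonal shift genuinely preserves irreducibility — which it does, because irreducibility depends only on the off-diagonal support of the matrix, untouched by adding $cI$ — and, second, that the peripheral spectrum is handled correctly: the cited theorem does \emph{not} assert a unique eigenvalue of maximal modulus, as an irreducible nonnegative matrix may be imprimitive, but this is immaterial because the elementary bound $|\mu|\le r$ already forces any eigenvalue with $\Re(\mu)=r$ to equal $r$, which is exactly what the strict spectral-abscissa inequality requires.
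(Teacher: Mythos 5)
Your proposal is correct and follows essentially the same route as the paper, which likewise deduces the result by applying the Perron--Frobenius theorem to the shifted nonnegative matrix $A+cI$ and translating the spectrum back by $-c$. Your additional care with the imprimitive case (ruling out other eigenvalues of real part equal to the spectral abscissa via $|\mu|\le r$) fills in a detail the paper leaves implicit, but the underlying argument is the same.
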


This result is obtained by applying the Perron-Frobenius theorem to the matrix $X=A+rId$ where $r$ is chosen such that $X$ has non-negative entries.

\section{Cooperative linear 1-periodic systems}
\label{CLDE}

We consider the linear differential equation
\begin{equation}\label{eq0}
\frac{dx}{dt}=A(t)x.
\end{equation}
We assume that \begin{hyp}\label{AssumptionA}
The function $A:t\mapsto A(t)$ is a 
{
piecewise continuous 1-periodic function}, 
with a finite number of discontinuities on $[0,1)$ and has
left and right limits at the  discontinuity points.
Moreover, for each $t\geq 0$, $A(t)$ is an irreducible Metzler matrix.
\end{hyp}

Hypothesis \ref{AssumptionA} implies that the solutions of \eqref{eq0} are continuous and piecewise $\mathcal{C}^1$ functions satisfying  \eqref{eq0} excepted on the discontinuity points of $A(t)$. Moreover, the positive cone is positively invariant for  \eqref{eq0}. More precisely, since 
$A(t)$ is cooperative and irreducible then, as a consequence of \cite[Theorem 1.1] {hirsch} or \cite{Slom}, we have the following result.

\begin{lem}\label{lemme1}
Suppose that $x:[0+\infty)\to \mathbb{R}^n$ is a solution of \eqref{eq0}
such that $x(0)>0$. Then $x(t)\gg 0$ for all $t>0$. 
\end{lem}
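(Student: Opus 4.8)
The plan is to reduce to a coefficient matrix with entrywise non-negative entries and then to propagate strict positivity along the (time-varying) connectivity graph. First I would choose a constant $c\ge 0$ large enough that $M(t):=A(t)+c\,\mathrm{Id}$ has non-negative entries for every $t$; this is possible because $A$ is Metzler (off-diagonal entries are already non-negative) and, by Hypothesis \ref{AssumptionA}, bounded over one period. The substitution $y(t)=e^{ct}x(t)$ turns \eqref{eq0} into $\dot y=M(t)y$ while preserving both $x\ge 0\iff y\ge 0$ and $x\gg 0\iff y\gg 0$, so it suffices to argue for $M(t)\ge 0$. I would first establish that the non-negative orthant is invariant: writing the fundamental solution $Y(t)$, $Y(0)=\mathrm{Id}$, as the limit of the Picard iterates $Y_{k+1}(t)=\mathrm{Id}+\int_0^t M(s)Y_k(s)\,ds$ (which converge on compact intervals for bounded piecewise continuous $M$), every iterate is entrywise non-negative since $M\ge 0$; hence $Y(t)\ge 0$ and $y(t)=Y(t)y(0)\ge 0$ whenever $y(0)\ge 0$.

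Next I would show that the support $P(t):=\{i:y_i(t)>0\}$ can only grow. Since the non-diagonal terms are non-negative we have $\dot y_i=\sum_j M_{ij}(t)y_j\ge M_{ii}(t)y_i$, and integrating this differential inequality (the integrating factor $\exp(-\int M_{ii})$ is continuous, so the estimate survives the finitely many jumps) yields $y_i(t)\ge y_i(s)\exp\!\left(\int_s^t M_{ii}\right)$ for $t\ge s$. Thus once a coordinate is positive it remains positive, i.e. $P$ is non-decreasing; and because $x(0)>0$ we have $P(0)\neq\emptyset$.

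The crux is to prove that $P(t)$ fills the whole index set for every $t>0$. Fix $t_1>0$ and suppose, for contradiction, that $S:=P(t_1)$ is a proper subset (it is non-empty by the previous step). By monotonicity $P(t)\subseteq S$ on $[0,t_1]$, so $y_k\equiv 0$ there for each $k\notin S$; differentiating and using $y_j\equiv 0$ for $j\notin S$ gives $\sum_{j\in S}M_{kj}(t)y_j(t)=0$, and since every summand is non-negative, $M_{kj}(t)\,y_j(t)=0$ for all $k\notin S$, $j\in S$, $t\le t_1$. On the other hand each $y_j$ with $j\in S$ is positive at $t_1$, hence positive on a left-neighbourhood of $t_1$; taking the finite minimum of these neighbourhoods produces an interval on which $P(t)$ equals $S$ exactly. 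Picking $t_*<t_1$ in this interval at which $M$ is continuous, the irreducibility of $M(t_*)$ (strong connectivity of its directed graph) forces an edge leaving $S$, that is indices $j\in S$, $k\notin S$ with $M_{kj}(t_*)>0$; as $y_j(t_*)>0$, this contradicts $M_{kj}(t_*)y_j(t_*)=0$. Hence $P(t_1)$ is the full index set, $y(t_1)\gg 0$, and since $t_1>0$ was arbitrary, $x(t)\gg 0$ for all $t>0$.

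The main obstacle is exactly this last step: because $A(t)$ is assumed irreducible only \emph{pointwise} in $t$, and the set of active edges $\{(k,j):M_{kj}(t)>0\}$ may vary with $t$, one cannot invoke a single strongly connected graph as in the autonomous case treated in \cite{hirsch,Slom}. The device that makes the pointwise irreducibility usable is to isolate a whole sub-interval of $[0,t_1)$ on which the support is frozen equal to $S$ and then apply irreducibility at an interior continuity point of that interval. The finitely many discontinuities enter only mildly — through the continuity of the integrating factor in the monotonicity step and through the freedom to choose $t_*$ away from jump points — which is where Hypothesis \ref{AssumptionA} is used beyond mere boundedness.
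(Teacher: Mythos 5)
Your proof is correct, but it takes a genuinely different route from the paper, which offers no self-contained argument at all: Lemma \ref{lemme1} is there deduced in one line as "a consequence of \cite[Theorem 1.1]{hirsch} or \cite{Slom}", i.e.\ of classical strong-monotonicity results for irreducible cooperative systems. Your argument replaces that citation with an elementary, fully explicit proof: the shift $M(t)=A(t)+c\,\mathrm{Id}$ and Picard iteration give invariance of the non-negative orthant; the scalar differential inequality $\dot y_i\ge M_{ii}(t)y_i$ gives monotonicity of the support $P(t)$; and the frozen-support argument, applied at a continuity point $t_*$ of $M$, turns the \emph{pointwise} irreducibility hypothesis into the contradiction $M_{kj}(t_*)y_j(t_*)>0$. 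Each step is sound, including the handling of the finitely many discontinuities (the integrating factor is continuous, and $t_*$ can be chosen away from jump times). What your approach buys is precisely what the paper's citation glosses over: the results of \cite{hirsch} and \cite{Slom} are stated for smooth (essentially autonomous) cooperative vector fields, so invoking them for a time-varying, only piecewise continuous $A(t)$ whose active edge set may change with $t$ requires an adaptation that the paper leaves implicit; your proof supplies it directly and is, in that sense, more rigorous for this setting. What the paper's route buys is brevity and the connection to the strong-monotonicity literature, which is reused elsewhere (e.g.\ in Proposition \ref{periodicPsi} and in Section \ref{PE} for the matrix $A(t/T)$).
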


Recall that the solution $x(t,x_0)$ to \eqref{eq0} such that $x(0,x_0)=x_0$ writes 
\begin{equation}\label{flot}
x(t,x_0)=\Phi(t)x_0
\end{equation}
where $\Phi(t)$, called the {\emph{fundamental matrix solution}}, is the solution to the matrix valued differential equation
\begin{equation}\label{fundamental0}
\frac{dX}{dt}=A(t)X,\qquad X(0)=Id.
\end{equation}
From Lemma \ref{lemme1} we deduce that for all $t>0$, $\Phi(t)$ has positive entries.

Let 
$\Delta:=\textstyle\left\{x\in\mathbb{R}_+^n:\sum_{i=1}^nx_i=1\right\}$ 
be the unit $n-1$ simplex of $\mathbb{R}^n_+$. Every $x\neq 0$ in $\mathbb{R}_+^n$ can be written as
\begin{equation}\label{rhotheta1}
x=\rho\theta,\textstyle
\quad\mbox{with}\quad
\rho=\sum_{i=1}^nx_i
\quad\mbox{and}\quad
\theta=\frac{x}{\rho}\in\Delta.
\end{equation}
The flow \eqref{flot} of \eqref{eq0} induces a flow on $\Delta$, given by
\begin{equation}\label{flotPsi}
\Psi(t,\theta)=\frac{\Phi(t)\theta}{\langle \Phi(t)\theta,{\bf 1}\rangle}.
\end{equation}
We have the following result.

\begin{propo}\label{periodicPsi}
Let $\pi\in\Delta$ {be the Perron vector} of $\Phi(1)$.  
Then  $t\mapsto\Psi(t,\pi)$ is a periodic orbit in $\Delta$. It is globally asymptotically stable, i.e. for any $\theta\in\Delta$
$$\lim_{t\to\infty}\|\Psi(t,\theta)-\Psi(t,\pi)\|=0.$$
\end{propo}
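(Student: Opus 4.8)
The plan is to reduce the continuous-time asymptotics to the discrete-time behaviour of the monodromy matrix $\Phi(1)$, for which the Perron theorem (Theorem \ref{Ptheorem}) already supplies convergence, and then to recover the behaviour at all times by interpolating over one period.

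First I would record the periodicity relation for the fundamental matrix. Since $A(\cdot)$ is $1$-periodic, both $t\mapsto \Phi(t+1)$ and $t\mapsto \Phi(t)\Phi(1)$ solve the matrix equation \eqref{fundamental0} and take the same value $\Phi(1)$ at $t=0$; by uniqueness they coincide, so $\Phi(t+1)=\Phi(t)\Phi(1)$, and iterating gives $\Phi(t+k)=\Phi(t)\Phi(1)^k$ for every integer $k\geq 0$ (in particular $\Phi(k)=\Phi(1)^k$, using $\Phi(0)=Id$). Writing $\mu$ for the Perron root and using $\Phi(1)\pi=\mu\pi$ together with $\|\pi\|_1=1$, a direct substitution in \eqref{flotPsi} gives $\Psi(t+1,\pi)=\Psi(t,\pi)$, so that $t\mapsto\Psi(t,\pi)$ is a $1$-periodic orbit in $\Delta$.

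For global asymptotic stability, fix $\theta\in\Delta$ and write $t=k+s$ with $k=\lfloor t\rfloor$ and $s\in[0,1)$. From $\Phi(k+s)=\Phi(s)\Phi(1)^k$ and the definition \eqref{flotPsi}, the positive scalar $\|\Phi(1)^k\theta\|_1$ cancels in numerator and denominator, yielding the cocycle identity $\Psi(k+s,\theta)=\Psi\bigl(s,\Psi(k,\theta)\bigr)$. Because $\Phi(1)$ has strictly positive entries (Lemma \ref{lemme1} and the remark following it), equation \eqref{Conv} of Theorem \ref{Ptheorem} gives $\Psi(k,\theta)=\Phi(1)^k\theta/\|\Phi(1)^k\theta\|_1\to\pi$ as $k\to\infty$. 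To transfer this to $t=k+s$, I would use that the map $(s,\xi)\mapsto\Psi(s,\xi)$ is continuous on the compact set $[0,1]\times\Delta$: the denominator $\|\Phi(s)\xi\|_1$ is bounded below by a positive constant there, since $s\mapsto\Phi(s)$ is continuous, $\Phi(s)\xi\gg 0$ for $s>0$, and $\Phi(0)\xi=\xi$ has $\|\xi\|_1=1$. Hence $\Psi$ is uniformly continuous on $[0,1]\times\Delta$. Combining this with the periodicity $\Psi(k+s,\pi)=\Psi(s,\pi)$,
\[
\|\Psi(k+s,\theta)-\Psi(k+s,\pi)\|=\|\Psi(s,\Psi(k,\theta))-\Psi(s,\pi)\|\leq \sup_{s\in[0,1]}\|\Psi(s,\Psi(k,\theta))-\Psi(s,\pi)\|,
\]
and uniform continuity together with $\Psi(k,\theta)\to\pi$ forces the right-hand side to $0$ as $k\to\infty$, i.e. as $t\to\infty$. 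This yields the claimed limit.

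The main obstacle I anticipate is not the discrete convergence, which is handed to us by \eqref{Conv}, but the passage from integer times to arbitrary times: one must exploit the $1$-periodicity of the system to factor $\Phi(k+s)=\Phi(s)\Phi(1)^k$, and then upgrade the pointwise convergence $\Psi(k,\theta)\to\pi$ into convergence of $\Psi(t,\theta)$ to the periodic orbit. This upgrade rests entirely on the uniform continuity of the normalized flow on $[0,1]\times\Delta$, which in turn requires a uniform positive lower bound on the normalizing denominator $\|\Phi(s)\xi\|_1$ over this compact set.
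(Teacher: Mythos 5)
Your proof is correct and follows essentially the same route as the paper: both establish periodicity from the fixed-point identity $\Psi(1,\pi)=\pi$ and obtain global attraction by applying the Perron limit \eqref{Conv} to the time-one (Poincar\'e) map $\Phi(1)$. The only difference is that where the paper invokes as a known fact the equivalence between asymptotic stability of a periodic orbit and of the corresponding fixed point of the Poincar\'e map, you prove that step explicitly via the factorization $\Phi(k+s)=\Phi(s)\Phi(1)^k$, the resulting cocycle identity, and the uniform continuity of $(s,\xi)\mapsto\Psi(s,\xi)$ on $[0,1]\times\Delta$ (justified by the positive lower bound on the normalizing denominator), which is a sound and self-contained replacement.
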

\begin{proof}
The {Perron vector} $\pi$ of $\Phi(1)$ is a fixed point for the induced flow $\Psi(1,\theta)$ on $\Delta$. Indeed, using \eqref{PerronVector} and\eqref{flotPsi}, we have
$$\Psi(1,\pi)=\frac{\Phi(1)\pi}{\langle \Phi(1)\pi,{\bf 1}\rangle}=\frac{\mu\pi}{\mu\langle \pi,{\bf 1}\rangle}=\pi. $$ 
 Therefore  $\Psi(t,\pi)$ is a periodic orbit in $\Delta$. 
 
Recall that the periodic orbit of a continuous dynamical system is asymptotically stable if and only if the corresponding fixed point of the Poincar\'e map is asymptotically stable.
 Using \eqref{flotPsi} and \eqref{Conv} we have
 \begin{align*}
 \lim_{k\to\infty}\|\Psi(k,\theta)-\Psi(k,\pi)\|
 =\lim_{k\to\infty}\left\|\frac{\Phi(1)^k\theta}{{\|\Phi(1)^k\theta\|_1}}-\pi\right\|=0.
 \end{align*}
 Therefore the fixed point $\pi$ of the Poincar\'e map is GAS.  
 This proves the global asymptotic stability of the periodic solution $\Psi(t,\pi)$.
 \color{black}
\end{proof}

Let us write the differential equation on $\Delta$ corresponding to the flow \eqref{flotPsi}. Using the decomposition \eqref{rhotheta1}, the differential equation \eqref{eq0}, with initial condition $x(0)>0$, rewrites:
\begin{align}
\label{eqrho}
\frac{d\rho}{dt}&=\langle A(t)\theta,
{\bf 1}\rangle \rho
\\
\label{eqtheta}
\frac{d\theta}{dt}&=A(t)\theta- \langle A(t)\theta,
{\bf 1}\rangle \theta
\end{align}
with initial conditions $\rho(0)=\langle x(0),
{\bf 1}\rangle$ and $\theta(0)=\frac{x(0)}{\langle x(0),
{\bf 1}\rangle}$. For any $\theta_0\in\Delta$, the solution $\theta(t)$ of of \eqref{eqtheta} with initial condition $\theta(0)=\theta_0$ is given by 
$\theta(t,\theta_0)=\Psi(t,\theta_0)$, where $\Psi$ is given by \eqref{flotPsi}. We can also express the solution $x(t,x_0)$ by using the solution $\theta(t,\theta_0)$, as shown in the following remark.

\begin{rem}\label{x=rhotheta}
Let $\theta(t,\theta_0)$ be the solution of \eqref{eqtheta} with initial condition $\theta_0$. The solution $x(t,x_0)$  of \eqref{eq0} with initial condition $x_0$ is given by
\begin{equation}\label{x(t,x0)}
x(t,x_0)=\theta\left(t,{x_0}/{\rho_0}\right)\rho_0e^{\int_0^t\langle A(s)\theta\left(s,{x_0}/{\rho_0}\right),{\bf 1}\rangle ds},
\end{equation}
where $\rho_0=\langle x_0,{\bf 1}\rangle$.
\end{rem}

We have the following result which is the particular case of 
Proposition \ref{periodicPsi} when the matrix $A$ is constant. We state it here because it is used several times in the proofs of our results, see Sections \ref{HFL}, \ref{LFL}, and \ref{FM}.

\begin{propo}\label{periodicPsi1}
Let $A$ be an irreducible Metzler matrix. Let $\lambda_{max}(A)$ be its spectral abscissa. Let $p\in\Delta$ be the Perron Frobenius vector associated to $\lambda_{max}(A)$. Then $p$ is an equilibrium point of the differential equation 
\begin{equation}\label{eqsimplex}
\frac{d\theta}{dt}=A\theta- \langle A\theta,{\bf 1}\rangle\theta
\end{equation}
on the simplex $\Delta$ associated to the autonomous linear equation $\frac{dx}{dt}=Ax$. It is GAS in the simplex $\Delta$.
\end{propo}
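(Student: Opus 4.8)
The plan is to deduce this from Proposition \ref{periodicPsi}, of which it is the constant-coefficient specialization, after a one-line check that $p$ is an equilibrium. First I would verify directly that $p$ solves \eqref{eqsimplex} trivially: since $Ap=\lambda_{max}(A)p$ and $\|p\|_1=1$, one has $\langle Ap,{\bf 1}\rangle=\lambda_{max}(A)\langle p,{\bf 1}\rangle=\lambda_{max}(A)$, so the right-hand side of \eqref{eqsimplex} evaluated at $p$ equals $\lambda_{max}(A)p-\lambda_{max}(A)p=0$. Hence $p$ is indeed an equilibrium point.

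For global asymptotic stability I would specialize the periodic framework of Appendix \ref{CLDE} to the constant matrix $A$, viewed as a $1$-periodic function. Then the fundamental matrix solution of $\frac{dx}{dt}=Ax$ is $\Phi(t)=e^{tA}$, the induced flow on $\Delta$ is $\Psi(t,\theta)=e^{tA}\theta/\langle e^{tA}\theta,{\bf 1}\rangle$, and \eqref{eqsimplex} is precisely the instance of \eqref{eqtheta} attached to this $A$. The monodromy matrix is $\Phi(1)=e^{A}$, and its Perron vector is exactly $p$: by the spectral mapping theorem the dominant eigenvalue of $e^{A}$ is $e^{\lambda_{max}(A)}$, with the same positive eigenvector $p$. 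Proposition \ref{periodicPsi} then asserts that $t\mapsto\Psi(t,p)$ is a globally asymptotically stable periodic orbit in $\Delta$. But here $\Psi(t,p)=e^{t\lambda_{max}(A)}p/\bigl(e^{t\lambda_{max}(A)}\|p\|_1\bigr)=p$ for all $t$, so this orbit collapses to the single equilibrium $p$, which is therefore GAS in $\Delta$.

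The only point that requires care, and which I regard as the main (though routine) obstacle, is the identification of the Perron vector of $e^{A}$ with $p$. Because $A$ is irreducible Metzler, $e^{A}$ has strictly positive entries and, by Theorem \ref{PFtheorem}, $\lambda_{max}(A)$ is a simple eigenvalue of strictly maximal real part; the spectral mapping theorem then guarantees that $e^{\lambda_{max}(A)}$ is the strictly dominant eigenvalue of $e^{A}$, i.e. its Perron root, with eigenvector $p$ normalized in $\Delta$. Once this is in place, the conclusion is immediate from Proposition \ref{periodicPsi}.

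As an alternative self-contained route, one could bypass the periodic proposition and argue directly: for every $x_0>0$, the convergence $e^{tA}x_0/\|e^{tA}x_0\|_1\to p$ as $t\to\infty$ follows from the simplicity and dominance of $\lambda_{max}(A)$ together with the positivity of both the right and left Perron eigenvectors, which forces the leading coefficient in the spectral expansion of $x_0$ to be positive. This convergence is exactly $\Psi(t,\theta_0)\to p$, establishing GAS of $p$ on $\Delta$ without reference to the periodic case.
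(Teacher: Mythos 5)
Your proposal is correct and follows essentially the same route as the paper: the paper states Proposition \ref{periodicPsi1} without a separate proof, precisely as the constant-matrix specialization of Proposition \ref{periodicPsi}, which is the reduction you carry out. Your filling-in of the details — the direct check that $p$ is an equilibrium, the identification via the spectral mapping theorem of the Perron vector of the monodromy matrix $e^{A}$ with $p$, and the observation that the periodic orbit $\Psi(t,p)$ collapses to the fixed point $p$ — is exactly what that specialization requires, and is sound.
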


We have the following result, which asserts the existence of the growth rate of \eqref{eq0} and which gives us two formulas to calculate it. One uses the periodic solution whose existence was given in Proposition \ref{periodicPsi}. The other uses the Perron root of the monodromy matrix of \eqref{eq0}.

\begin{theorem}\label{Lambda(x)}
Let $\Lambda=\ln(\mu)$, where $\mu$ is the Perron root of the monodromy matrix $\Phi(1)$ of \eqref{eq0}. Let $\pi$ its Perron vector. Let $\theta^*(t):=\Psi(t,\pi)$ is the periodic solution of \eqref{eqtheta} whose existence and global asymptotic stability are proved in Proposition \ref{periodicPsi}.
For any solution $x(t)$ of \eqref{eq0}, such that $x(0)>0$, we have
\begin{equation*}
\lim_{t\to\infty}\frac{1}{t}\ln(x_i(t))=\int_0^1\langle A(t)\theta^*(t),{\bf 1}\rangle dt=\Lambda,
\end{equation*}
\end{theorem}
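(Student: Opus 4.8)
The plan is to start from the explicit representation of solutions provided by Remark \ref{x=rhotheta}. Writing $\theta_0 = x(0)/\rho_0$ with $\rho_0 = \langle x(0),{\bf 1}\rangle$, formula \eqref{x(t,x0)} gives, for each $i$,
$$\ln(x_i(t)) = \ln\theta_i(t,\theta_0) + \ln\rho_0 + \int_0^t \langle A(s)\theta(s,\theta_0),{\bf 1}\rangle\, ds,$$
where $\theta(\cdot,\theta_0)$ is the solution of \eqref{eqtheta} starting at $\theta_0$. Dividing by $t$ reduces the claim to controlling these three terms: the first two should disappear in the limit, while the time average of the integrand should produce the announced value.

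First I would show that $\tfrac1t\ln\theta_i(t,\theta_0)\to 0$. By Lemma \ref{lemme1} the solution satisfies $x(t)\gg 0$ for $t>0$, hence $\theta(t,\theta_0)\gg 0$ there; moreover Proposition \ref{periodicPsi} yields $\|\theta(t,\theta_0)-\theta^*(t)\|\to 0$, and $\theta^*(t)=\Psi(t,\pi)$ lies in a compact subset of the interior of $\Delta$, since $\Phi(t)\pi\gg 0$ for all $t$ (using $\pi\gg 0$ and positivity of $\Phi(t)$) and $\theta^*$ is continuous and $1$-periodic. Consequently, for $t$ large, the components $\theta_i(t,\theta_0)$ are bounded away from $0$ and bounded above by $1$, so $\ln\theta_i(t,\theta_0)$ is bounded and its quotient by $t$ vanishes; the same is trivially true of $\tfrac1t\ln\rho_0$.

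Next I would treat the integral. Since $A$ is $1$-periodic and piecewise continuous with one-sided limits, it is bounded, say $\|A(s)\|\le M$; together with $\|\theta(s,\theta_0)-\theta^*(s)\|\to 0$ this gives $|\langle A(s)(\theta(s,\theta_0)-\theta^*(s)),{\bf 1}\rangle|\to 0$, and the Cesàro average of a function tending to $0$ tends to $0$. Hence
$$\frac1t\int_0^t \langle A(s)\theta(s,\theta_0),{\bf 1}\rangle\,ds = \frac1t\int_0^t \langle A(s)\theta^*(s),{\bf 1}\rangle\,ds + o(1).$$
The integrand $s\mapsto\langle A(s)\theta^*(s),{\bf 1}\rangle$ is $1$-periodic because both $A$ and $\theta^*$ are, so its time average converges to its mean over one period, yielding $\lim_{t\to\infty}\tfrac1t\ln(x_i(t))=\int_0^1\langle A(s)\theta^*(s),{\bf 1}\rangle\,ds$. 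This simultaneously establishes existence of the limit, its independence of the initial condition, and the second equality in the statement.

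It remains to identify this common value with $\Lambda=\ln\mu$. Here I would specialize to the solution with $x(0)=\pi$. By $1$-periodicity of $A$ the monodromy matrix satisfies $\Phi(k)=\Phi(1)^k$, and since $\Phi(1)\pi=\mu\pi$ we obtain $x(k,\pi)=\mu^k\pi$, so that $\tfrac1k\ln(x_i(k,\pi))=\ln\mu+\tfrac1k\ln\pi_i\to\ln\mu$ along the integers. As the full limit already exists by the previous step, it must equal $\ln\mu$, whence $\int_0^1\langle A(s)\theta^*(s),{\bf 1}\rangle\,ds=\Lambda$, completing the argument. I expect the only delicate point to be the Cesàro step: one must ensure that the convergence $\theta(s)\to\theta^*(s)$ together with the boundedness of $A$ legitimately permits swapping $\theta$ for $\theta^*$ inside the time average, which is precisely where the global asymptotic stability of Proposition \ref{periodicPsi} does the essential work.
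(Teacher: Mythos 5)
Your proposal is correct and follows essentially the same route as the paper's own proof: the representation of $x(t)$ from Remark \ref{x=rhotheta}, the use of global asymptotic stability (Proposition \ref{periodicPsi}) to replace $\theta(s,\theta_0)$ by $\theta^*(s)$ in the time average, periodicity to identify that average with the integral over one period, and the special solution $x(0)=\pi$ with $\Phi(k)=\Phi(1)^k$ to equate the common value with $\ln\mu$. If anything, your argument is slightly more careful than the paper's, since you explicitly justify that $\theta^*$ stays in a compact subset of the interior of $\Delta$ so that $\frac{1}{t}\ln\theta_i(t,\theta_0)\to 0$, a point the paper passes over silently.
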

\begin{proof}
Using \eqref{x(t,x0)},  the Lyapunov exponent of the components of any solution $x(t,x_0)$ of \eqref{eq0} can be computed as follows
\begin{align*}
\lim_{t\to\infty}\frac{1}{t}\ln(x_i(t,x_0))&=\lim_{k\to\infty}\frac{1}{k}\ln(x_i(k,x_0))\\
&=
\lim_{k\to\infty}\frac{1}{k}\left[
\ln(\theta_i(k,x_0/\rho_0)\rho_0)+
\int_0^{k}U(s) ds\right],
\end{align*}
where $U(s)=\langle A(s)\theta\left(s,{x_0}/{\rho_0}\right),{\bf 1}\rangle$.
Since $\|\theta\left(t,{x_0}/{\rho_0}\right)-\theta^*(t)\|$ tends to 0, as $t$ tends to $\infty$, the first term in the right hand side goes to 0. Therefore, for all $k_1\geq 0$,
$$
\lim_{t\to\infty}\frac{1}{t}\ln(x_i(t,x_0))=
\lim_{k\to\infty}\frac{1}{k}\left[
\int_0^{k_1}U(s) ds+\int_{k_1}^{k}U(s) ds\right]
$$
Using Proposition \ref{periodicPsi}, for $k_1$ large enough, we can replace in the second integral $\theta\left(t,{x_0}/{\rho_0}\right)$ by $\theta^*(t)$, and then, using the fact that the first term tends to 0 as $k\to\infty$, we have 
\begin{align*}
\lim_{t\to\infty}\frac{1}{t}\ln(x_i(t,x_0))&=
\lim_{k\to\infty}\frac{1}{k}\int_{k_1}^{k}
\langle A(s)\theta^*(s),{\bf 1}\rangle ds\\
&=\lim_{k\to\infty}\frac{k-k_1}{k}\int_{0}^{1}
\langle A(s)\theta^*(s),{\bf 1}\rangle ds
=\int_0^1\langle A(t)\theta^*(s),{\bf 1}\rangle ds.
\end{align*}
This proves the first equality. The 
second equality is proved as follows.
 Let $x(t)=\Phi(t)\pi$ be the solution of  \eqref{eq0}, with initial condition $x(0)=\pi$, where $\pi\in\Delta$ is the Perron vector of  $\Phi(1)$. Since  $x(1)=\Phi(1)\pi=\mu\pi$, we have $x(k)=\Phi(k)\pi=\Phi(1)^k\pi=\mu^k\pi$. Hence
 $$\lim_{k\to\infty}\frac{1}{k}\ln(x_i(k))=\ln(\mu)=\Lambda,$$
 which proves the formula, since all components of all solutions $x(t)$ of \eqref{eq0}, such that $x(0)>0$, have the same Lyapunov exponents. 
 \end{proof}

\begin{rem}
In the Floquet theory of linear periodic systems (see \cite[Section 4.6]{Hartman}), $\ln(\mu)$ is known as the \emph{largest Floquet exponent}, or the \emph{principal Lyapunov exponent}, that is, the 
{characteristic} 
multiplier corresponding to
the dominant eigenvalue   
$\mu$ of $\Phi(1)$. For further details, we refer the reader to \cite{Carmona} and \cite[Section II.2]{Mierczynski}. 
 \end{rem}

\section{Tikhonov's theorem}\label{SecTikhonov}
Tikhonov's theorem \cite{tikhonov} provides a mathematically rigorous basis for the quasi-steady-state approximation commonly used in the study of systems at several time scales \cite{Noethen,Schneider}. 
We consider the singularly perturbed initial value problem
\begin{equation}\label{LR}
\left\lbrace 
\begin{array}{rclcl}
\varepsilon\frac{dx}{d\tau}&=&f(\tau,x,y,\varepsilon),&&x(\tau_0)=x^0(\varepsilon),\\[1mm]
\frac{dy}{d\tau}&=&
g(\tau,x,y,\varepsilon)&&y(\tau_0)=y^0(\varepsilon),
\end{array}\right. 
\end{equation}
for an $m$ vector $x$ and an $n$ vector $y$ on some bounded interval, say $\tau_0\leq \tau\leq \tau_1$, where $\varepsilon$ is a small positive parameter, $0<\varepsilon\ll 1$. We assume that 

\begin{description}
\item[H0] The functions $f$ and $g$ are continuous in $\tau\in[\tau_0,\tau_1]$.
\item[H1] The functions $f$, $g$, $x^0$ and $y^0$ are continuous in $\varepsilon$. The functions $f$ and $g$ are differentiable in their $x$ and $y$ arguments. 
\end{description} 

If assumptions H0 and H1 are satisfied, then the initial value problem \eqref{LR} has a unique solution, denoted $x(\tau,\varepsilon)$, $y(\tau,\varepsilon)$.
When $\varepsilon\to 0$, \eqref{LR} is a \textit{slow-fast} system, with
$m$ \textit{fast variables} $x$,
and $n+1$ \textit{slow variables}, $y$ and $\tau$.
According to Tikhonov's theory, the so called \emph{fast equation} is
 \begin{equation}\label{FEq}
 \frac{dx}{dt}=f(\tau,x,y,0)
 \end{equation}
where $\tau$ and $y$ are considered as parameters. Note that this equation is obtained by replacing $\varepsilon$  by 0 in the right hand side of the system
\begin{equation}\label{LR1}
 \left\lbrace 
\begin{array}{rcl}
\frac{dx}{dt}&=&f(\tau,x,y,\varepsilon),\\[1mm]
\frac{dy}{dt}&=&
\varepsilon g(\tau,x,y,\varepsilon)\\[1mm]
\frac{d\tau}{dt}&=&\varepsilon.
\end{array}\right. 
\end{equation}
which is equivalent to the slow-fast system in \eqref{LR}, written 
with the time $t=\tau/\varepsilon$.
We refer to $\tau$ as the \emph{slow time} and to $t$ as the \emph{fast time}. 
We assume that 
\begin{description}
\item[H2] 
For any $y$ in a compact set $K$ and $\tau\in[\tau_0,\tau_1]$, the fast equation \eqref{FEq} has an equilibrium $x= \xi(\tau,y)$, which is asymptotically stable with a basin of attraction that is uniform in the parameters $(\tau,y)\in[0,1]\times K$. The function $\xi$ is continuous in $\tau$ and differentiable in $y$.
\end{description} 

The \emph{critical manifold}, also called \emph{slow manifold}, is the set of equilibrium points $x= \xi(\tau,y)$ of the fast equation \eqref{FEq}. 
 The \emph{reduced equation}, defined for $(\tau,y)\in[\tau_0,\tau_1]\times K$,
\begin{equation}
\label{REq}
\frac{dy}{d\tau}=
g(\tau,\xi(\tau,y),y,0),\qquad y(t_0)=y^0(0)
\end{equation}  
is obtained by replacing $\varepsilon$ by 0 and $x$ by $\xi(\tau,y)$ in the second equation of \eqref{LR}. Since the function $\xi$ is continuous in $\tau$ and differentiable in $y$, the equation \eqref{REq} is well defined.
Tikhonov's theorem states that the solution of \eqref{LR} jumps quickly near the critical manifold and is then approximated by the solution of \eqref{REq}. More precisely:

\begin{theorem}[Tikhonov's theorem] Assume that H0, H1 and H2 are satisfied. Assume that $y^0(0)\in K$ and  $x^0(0)$ belongs to the basin of attraction of $\xi(\tau_0,y^0(0))$.
Let 
$\overline{y}(\tau)$ be the solution of \eqref{REq}, which is assumed to exist on the interval $[\tau_0,\tau_1]$. 
Let $\nu>0$. For $\varepsilon$ small enough, $x(\tau,\varepsilon)$ and $y(\tau,\varepsilon)$ are defined on $[\tau_0,\tau_1]$ and, as  $\varepsilon\to 0$
$$
\begin{array}{ll}
y(\tau,\varepsilon)=\overline{y}(\tau)+o(1)
&\mbox{ uniformly on }\quad [\tau_0,\tau_1],\\ 
x(\tau,\varepsilon)=\xi(\tau,\overline{y}(\tau))+o(1)
&\mbox{ uniformly on }\quad  [\tau_0+\nu,\tau_1].
\end{array} 
$$
\end{theorem}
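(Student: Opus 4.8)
The plan is to prove the result in the two classical stages of singular perturbation theory: an initial \emph{boundary layer} during which the fast variable $x$ relaxes onto the slow manifold, followed by an \emph{outer} phase during which the trajectory is slaved to the reduced dynamics. First I would normalize the problem by the substitution $z = x - \xi(\tau,y)$, so that the slow manifold becomes $z=0$; in the stretched fast time $t=(\tau-\tau_0)/\varepsilon$ the triple $(z,y,\tau)$ then solves a system whose $\varepsilon=0$ limit is the boundary layer equation $dz/dt = f(\tau, \xi(\tau,y)+z, y, 0)$ with $(\tau,y)$ frozen as parameters, for which $z=0$ is asymptotically stable with a basin uniform in $(\tau,y)\in[\tau_0,\tau_1]\times K$ by H2.

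For the boundary layer I would fix a small $\sigma>0$ and work on the slow interval $[\tau_0,\tau_0+\sigma]$. Since $dy/d\tau=g$ is bounded on compacts (H0, H1), $y$ moves by at most $O(\sigma)$ there, so the frozen parameters stay near $(\tau_0, y^0(0))$. A continuity-in-parameters argument for the flow of the boundary layer equation, together with the uniform basin of attraction, then shows that after this interval $z$ has entered an arbitrarily small neighborhood of the origin, uniformly for $\varepsilon$ small. This is where the hypothesis that $x^0(0)$ lies in the basin of $\xi(\tau_0,y^0(0))$ enters, and where the $o(1)$ estimate for $x$ is necessarily sacrificed near $\tau_0$, explaining the shift to $[\tau_0+\nu,\tau_1]$ in the conclusion for $x$.

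For the outer phase I would control $z$ and $y-\overline y$ simultaneously by a coupled Gronwall estimate. Exploiting the uniform asymptotic stability in H2, I would introduce a Lyapunov function $V(\tau,y,z)$ for the boundary layer system and use the differentiability of $f$, $g$ and $\xi$ to bound $\tfrac{d}{d\tau}V$ along trajectories by $-\tfrac{c}{\varepsilon}V + (\text{slow drift terms})$, forcing $\|z(\tau,\varepsilon)\|$ to stay $o(1)$ once it has become small. Feeding this back into the $y$-equation, the Lipschitz bound $\|g(\tau,\xi+z,y,\varepsilon)-g(\tau,\xi,\overline y,0)\| \le C(\|z\|+\|y-\overline y\|+\varepsilon)$ and Gronwall's lemma close the loop and give $y(\tau,\varepsilon)=\overline y(\tau)+o(1)$ uniformly on $[\tau_0,\tau_1]$ and $x(\tau,\varepsilon)=\xi(\tau,\overline y(\tau))+o(1)$ on $[\tau_0+\nu,\tau_1]$.

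The hard part will be the outer estimate: H2 only supplies \emph{asymptotic}, not exponential, stability with a merely uniform basin, so the converse Lyapunov function it yields need not come with the clean quadratic sandwich bounds that make the decay estimate $-\tfrac{c}{\varepsilon}V$ immediate. Making the drift of the slow variables compatible with the contraction toward the slow manifold, so that the slow motion never pushes the frozen parameters out of the region of uniform attraction and the accumulated error stays $o(1)$, is the crux, and is exactly what requires the uniformity built into H2 rather than mere pointwise stability.
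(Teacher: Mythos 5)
You should first be aware that the paper does not prove this statement at all: Tikhonov's theorem is quoted as a classical external result, with the proof delegated to the literature it cites (Tikhonov's original paper, O'Malley, Wasow, Khalil's Theorem 11.1, and Lobry--Sari--Touhami). So there is no internal proof to compare against, and your proposal must be judged on its own merits; judged that way, it contains a genuine gap, one which you yourself flag in your last paragraph but do not close.

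The gap is the outer estimate. The differential inequality $\frac{d}{d\tau}V \leq -\frac{c}{\varepsilon}V + (\text{slow drift})$ is exactly what a converse Lyapunov theorem delivers under \emph{exponential} stability of the boundary-layer equilibrium, uniformly in the frozen parameters --- that is the setting of Khalil's Theorem 11.1, not of the statement here. Hypothesis H2 grants only uniform \emph{asymptotic} stability with a uniform basin, under which no linear decay rate for $V$ is available, so the Gronwall loop you build on top of that inequality does not close; since this inequality is the engine of your whole outer phase, the proposal is a strategy with its central step missing rather than a proof. Two standard repairs exist. (i) Keep the Lyapunov route but weaken the demand: a Massera/Kurzweil-type parametrized converse theorem gives $V$ with $\dot V \leq -\alpha(\|z\|)$ for some class-$\mathcal{K}$ function $\alpha$, uniformly for $(\tau,y)\in[\tau_0,\tau_1]\times K$; in slow time this reads $\frac{dV}{d\tau} \leq -\frac{1}{\varepsilon}\alpha(\|z\|) + C$, which for $\varepsilon$ small forces $V$ to decrease whenever $\|z\|$ exceeds any prescribed threshold, so small sublevel sets of $V$ are positively invariant and trap the trajectory --- no exponential rate is needed, and the $y$-estimate then follows from Gronwall exactly as you intend. (ii) Dispense with Lyapunov functions entirely and argue by compactness and continuous dependence: if for a sequence $\varepsilon_k \to 0$ the solution left a fixed small tube around the slow manifold at first exit times $\tau_{\varepsilon_k}$, rescaling to fast time and passing to a limit would produce a solution of a frozen fast equation that starts arbitrarily close to its equilibrium yet exits a fixed neighborhood, contradicting the uniform stability in H2. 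This second route is the one taken in the reference of Lobry, Sari and Touhami cited by the paper and in Wasow's book, and it is what makes the theorem true under the weak hypothesis H2 rather than under exponential stability.
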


\begin{rem}
The approximation given by Tikhonov's theorem holds for all $\tau\in[\tau_0,\tau_1]$ for the slow variable $y(\tau,\varepsilon)$ and for all $\tau\in[\tau_0+\nu,\tau_1]$ for the fast variables $x(\tau,\varepsilon)$, where $\nu$ is as small as we want. Indeed we have a \emph{boundary layer} at $\tau=\tau_0$ since the fast variables jumps quickly from their initial conditions $x^0(\varepsilon)$ near the point 
$\xi(\tau_0,y_0(0))$ of the slow manifold.
\end{rem}

{This theorem was first stated by Tikhonov \cite{tikhonov}  and can be found in various forms in the classical literature, see the book by O'Malley \cite[Section 2.D]{O'Malley} and the book by Wasow \cite[Section X.39]{wasow}.
For a statement of Tikhonov's theorem that is very close to the one given here, the reader can refer to \cite{LST} or to Khalil's book \cite[Theorem 11.1]{Khalil}. The book by Banaisak and Lachowicz \cite[Chapter 3]{Banasiak} is also a highly recommended reference for the reader interested by applications in mathematical biology.
This result remains true under less restrictive conditions on the regularity of $f$ and $g$, see \cite{LST,wasow}.
This result was extended by Fenichel \cite{Fenichel} in the context of \emph{Geometric Singular Perturbation Theory}. See also \cite[Chapter 3]{Kuehn}.} 

We want to use Tikhonov's theorem when the functions $f$ and $g$ have discontinuities in the variable $\tau$. Assume that 

\begin{description}
\item[H0'] There exist a finite set 
$D=\{\tau_k, 1\leq k\leq p:  0< \tau_1<\cdots<\tau_p<1\}$ such that $f$ and $g$ are continuous on $[0,1]\setminus D$ and have right and left limits at the discontinuity points $\tau_k\in(0,1)$, $k=1,\ldots,p$.   
\end{description}
We extend $f$ by its right limit at each discontinuity point, so that the fast equation \eqref{FEq} is defined for all $\tau\in[0,1]$. We assume that
\begin{description}
\item[H2'] 
For all $y$ in some compact set $K$ and $\tau\in[0,1]$, the fast equation \eqref{FEq} 
has an asymptotically stable equilibrium 
$x= \xi(\tau,y)$ 
with a basin of attraction that
is uniform in the parameters 
$(\tau, y) \in[0,1]\times K$. The function $\xi$ is continuous for $\tau\in [0,1]\setminus D$ and differentiable in $y\in K$ and has left and right limits at the discontinuity points $\tau_k$ denoted by 
\begin{equation}\label{LRlimits}
\xi(\tau_k-0,y)=\lim_{\tau\to\tau_k,\tau<\tau_k}\xi(\tau,y),\quad
\xi(\tau_k+0,y)=\lim_{\tau\to\tau_k,\tau>\tau_k}\xi(\tau,y).
\end{equation}
Moreover, we assume that 
for all $y\in K$, $\xi(\tau_k+0,y)$ is an asymptotically stable equilibrium for the fast equation \eqref{FEq}, where $\tau=\tau_k$, and that $\xi(\tau_{k+1}-0,y)$ belongs to its basin of attraction.
\end{description} 

Since the function $\xi$ is continuous for $\tau\in [0,1]\setminus D$ and differentiable in $y\in K$, the equation \eqref{REq} is well defined on $[0,1]\times K$. 

As a consequence of Tikhonov's theorem, we have the following result, which is used in the proofs of items 2 and 4 of Theorem \ref{thm1}, see Sections  \ref{LFL} and \ref{FM}.

\begin{propo} 
\label{PropTikhonov}
Assume that H0', H1 and H2' are satisfied. Assume that $y^0(0)\in K$ and  $x^0(0)$ belongs to the basin of attraction of $\xi(\tau_0,y^0(0))$.
Let 
$\overline{y}(\tau)$ be the solution of \eqref{REq}, which is assumed to exist on the interval $[0,1]$. 
Let $\nu>0$. For $\varepsilon$ small enough, $x(\tau,\varepsilon)$ and $y(\tau,\varepsilon)$ are defined on $[0,1]$ and, as  $\varepsilon\to 0$
$$
\begin{array}{ll}
y(\tau,\varepsilon)=\overline{y}(\tau)+o(1)
&\mbox{ uniformly on }\quad [0,1],\\ 
x(\tau,\varepsilon)=\xi(\tau,\overline{y}(\tau))+o(1)
&\mbox{ uniformly on }\quad  [0,1]\setminus \bigcup_{k=0}^p[\tau_k,\tau_k+\nu].
\end{array} 
$$
where $\tau_0=0$ and $\tau_k$, $1\leq k\leq p$, are the discontinuity points of $f$ and $g$.
\end{propo}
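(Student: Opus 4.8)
The plan is to reduce Proposition \ref{PropTikhonov} to the classical Tikhonov theorem by applying the latter successively on each interval of continuity and gluing the approximations across the discontinuity points. Set $\tau_0=0$, $\tau_{p+1}=1$, and consider the closed subintervals $I_k=[\tau_k,\tau_{k+1}]$ for $0\le k\le p$. On each $I_k$ the functions $f$, $g$ and the equilibrium map $\xi$ are continuous on the open interval and possess one-sided limits at the endpoints by H0' and H2'; extending them by the right limit at $\tau_k$ and the left limit at $\tau_{k+1}$ produces functions that are continuous on all of $I_k$. With these extensions, the restriction of \eqref{LR} to $I_k$ satisfies exactly the hypotheses H0, H1, H2 of the classical Tikhonov theorem, the stable equilibrium being $\xi$ restricted to $I_k$ and the reduced equation being \eqref{REq} on $I_k$.

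First I would treat $I_0=[0,\tau_1]$. Since $y^0(0)\in K$ and $x^0(0)$ lies in the basin of attraction of $\xi(\tau_0,y^0(0))$, Tikhonov's theorem gives $y(\tau,\varepsilon)=\overline y(\tau)+o(1)$ uniformly on $I_0$ and $x(\tau,\varepsilon)=\xi(\tau,\overline y(\tau))+o(1)$ uniformly on $[\nu,\tau_1]$, with the boundary layer sitting at $\tau_0$. Evaluating at the right endpoint and using that $\overline y$ is continuous, one reads off $y(\tau_1,\varepsilon)=\overline y(\tau_1)+o(1)$ and $x(\tau_1,\varepsilon)=\xi(\tau_1-0,\overline y(\tau_1))+o(1)$. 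This is precisely the data needed to restart on $I_1$: by the last part of H2', the point $\xi(\tau_1-0,\overline y(\tau_1))$ lies in the basin of attraction of the new equilibrium $\xi(\tau_1+0,\overline y(\tau_1))$ of the fast equation \eqref{FEq} at $\tau=\tau_1$, and since this basin is uniform, for $\varepsilon$ small enough $x(\tau_1,\varepsilon)$ itself lies in it. Hence Tikhonov's theorem applies again on $I_1$ with initial fast value $x(\tau_1,\varepsilon)$ and initial slow value $y(\tau_1,\varepsilon)$, yielding the same approximations on $I_1$ with a fresh boundary layer at $\tau_1$. Iterating over $k=0,\dots,p$ produces the approximations on every $I_k$.

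Concatenating the $p+1$ pieces gives $y(\tau,\varepsilon)=\overline y(\tau)+o(1)$ uniformly on all of $[0,1]$, since $\overline y$ is the global continuous solution of \eqref{REq} and the finitely many error terms are each $o(1)$; likewise $x(\tau,\varepsilon)=\xi(\tau,\overline y(\tau))+o(1)$ uniformly on $\bigcup_{k=0}^p[\tau_k+\nu,\tau_{k+1}]=[0,1]\setminus\bigcup_{k=0}^p[\tau_k,\tau_k+\nu]$, each discontinuity point carrying its own boundary layer of width $\nu$. The main obstacle, and the only place where the discontinuous setting genuinely differs from the classical one, is the gluing at each $\tau_k$: one must verify that the terminal fast value inherited from $I_{k-1}$, which converges to $\xi(\tau_k-0,\overline y(\tau_k))$, falls into the basin of attraction of the equilibrium $\xi(\tau_k+0,\overline y(\tau_k))$ of the new fast dynamics, so that a boundary layer forms there and the solution relaxes back onto the slow manifold. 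This is exactly the compatibility condition built into H2' through the one-sided limits \eqref{LRlimits}, and the uniformity of the basins guarantees that the convergent but $\varepsilon$-dependent restart data remain admissible for all small $\varepsilon$.
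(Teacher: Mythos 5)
Your proof is correct and follows essentially the same approach as the paper's: decomposing $[0,1]$ into the continuity intervals $[\tau_k,\tau_{k+1}]$, extending $f$, $g$, $\xi$ by one-sided limits so that the classical Tikhonov theorem applies on each piece, and gluing at each $\tau_k$ via the compatibility condition in H2' that $\xi(\tau_k-0,\overline{y}(\tau_k))$ lies in the basin of attraction of $\xi(\tau_k+0,\overline{y}(\tau_k))$. Your additional remark that the uniformity of the basins makes the $\varepsilon$-dependent restart data admissible is a welcome explicit justification of a step the paper's proof treats more briefly.
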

\begin{proof}
For each $0\leq k\leq p$, we consider the system \eqref{LR} on the interval $[\tau_k,\tau_{k+1}]$, where $\tau_{p+1}=1$. We extend the functions $f$ and $g$ by continuity to $\tau_k$ and $\tau_{k+1}$ (such extensions exist because, according to H0', the functions have left and right limits at the discontinuity points $\tau_k$). This system satisfies assumptions H0, H1 and H2. 
For the first interval ($k=0$), we use the initial condition $x(0)=x^0(\varepsilon)$ and $y(0)=y^0(\varepsilon)$. We obtain an approximation on the interval $[\tau_0,\tau_1]$. For the second interval ($k=1$), we use as initial conditions  $x(\tau_1,\varepsilon)$ and $y(\tau_1,\varepsilon)$, which, according to the approximation obtained in the first interval, are close to $\xi(\tau_1-0,\overline{y}(\tau_1))$ and $\overline{y}(\tau_1)$, respectively. 
Since $\xi(\tau_1+0,\overline{y}(\tau_1))$ is an asymptotically stable equilibrium for the fast equation \eqref{FEq}, and $\xi(\tau_1-0,\overline{y}(\tau_1))$ belongs to its basin of attraction, Tikhonov's theorem can be used on the interval $[\tau_1,\tau_2]$. Similarly for the following intervals. Therefore, on each interval, for any $\nu>0$, as $\varepsilon\to 0$,
$$
\begin{array}{ll}
y(\tau,\varepsilon)=\overline{y}(\tau)+o(1)
&\mbox{ uniformly on }\quad [\tau_k,\tau_{k+1}],\\ 
x(\tau,\varepsilon)=\xi(\tau,\overline{y}(\tau))+o(1)
&\mbox{ uniformly on }\quad  [\tau_k+\nu,\tau_{k+1}].
\end{array} 
$$
This ends the proof of the proposition.
\end{proof}
\begin{rem}\label{Rem12}
In addition to the boundary layer  at $\tau=0$, we have now an \emph{inner layer} at each discontinuity point $\tau_k$, because the fast variables must jump quickly from a point close to
$\xi(\tau_k-0,y(\tau_k))$ to a point close to $\xi(\tau_k+0,y(\tau_k))$, where the left and right limits are defined by \eqref{LRlimits}. These behaviors are illustrated in the
supplementary material Figure S6 for the limit $T\to\infty$.
\end{rem}

\begin{rem}\label{Noslowvariable}
Consider the special case where there is no slow variable $y$ in \eqref{LR}, that is to say we have a singularly perturbed system of the form
$$\varepsilon\frac{dx}{d\tau}=f(\tau,x,\varepsilon),$$
where $\tau$ is the only slow variable.
The critical manifold is a curve (also called the slow curve) $x=\xi(\tau)$, where $\xi(\tau)$ is the equilibrium of the fast dynamics  $\frac{dx}{dt}=f(\tau,x,0)$. In this case, the fast variable $x(\tau,\varepsilon)$ is approximated by the slow curve, i.e.  the result of Proposition \ref{PropTikhonov} becomes
$x(\tau,\varepsilon)=\xi(\tau)+o(1)$
uniformly on each interval $[\tau_k+\nu,\tau_{k+1}]$.

\end{rem}

\section*{Acknowledgments} We warmly thank  the two anonymous reviewers
whose constructive remarks have produced a significantly
improved version of our article.

{}

\pagebreak
\begin{center}
{\Large\bf Supplementary material}
\end{center}

\renewcommand{\thesection}{S\arabic{section}}  
\renewcommand{\thetable}{S\arabic{table}}  
\renewcommand{\thefigure}{S\arabic{figure}} 
\renewcommand{\figurename}{Supplemental Material, Figure} 
\renewcommand{\tablename}{Supplemental Material, Table}
 \setcounter{section}{0}
 \setcounter{figure}{0}
 \setcounter{table}{0}

\section{Time-independent asymmetric migration}\label{Example3sites}

Our objective in this section is to show numerical simulations with three patches that illustrate all our findings and also corroborate  Conjecture 1. We consider the three patch case with time independent migration given by $\ell_{12}=\ell_{23}=\ell_{31}=0$ and $\ell_{21}=\ell_{32}=\ell_{13}=1$, which corresponds to a circular migration $1\to 2\to 3\to 1$. We consider the piecewise constant growth rates
$$
\begin{array}{c}
r_1(\tau)=\left\{\begin{array}{rcl}
0.15&\mbox{if}&0\leq \tau< 1/2\\
-0.45&\mbox{if}&1/2\leq \tau< 1
\end{array}
\right.,\quad
r_2(\tau)=\left\{\begin{array}{rcl}
-0.45&\mbox{if}&0\leq \tau< 1/2\\
0.15&\mbox{if}&1/2\leq \tau< 1
\end{array}
\right..\\[5mm]
r_3(\tau)=-0.2\mbox{ for }\tau\in[0,1].
\end{array}
$$
We have
$\chi=0.15$, $\overline{r}_1=\overline{r}_2=-0.15$ and $\overline{r}_3=-0.2$. Therefore, the patches are sinks and DIG occurs. 
In this case the the monodromy matrix is $\Phi(m,T)=e^{\frac{T}{2}B}e^{\frac{T}{2}A}$ where the matrices 
$A$ and $B$ 
are given by:
$$
A=\left[
\begin{array}{ccc}
0.15-m&0&m\\
m&-0.45-m&0\\
0&m&-0.2-m
\end{array}
\right],
$$
$$
B=\left[
\begin{array}{ccc}
-0.45-m&0&m\\
m&0.15-m&0\\
0&m&-0.2-m
\end{array}
\right].
$$
We show in Figure \ref{fig6}(a) the plot of $\Lambda(m,T)=\frac{1}{T}\ln(\mu(m,T))$ where $\mu(m,T)$ is the Perron root of the monodromy matrix $\Phi(m,T)$. The plot of the graphs of the functions $(m, T)\mapsto \Lambda(m, T)$, $T\mapsto \Lambda(m,T)$, for $m$ fixed,  $m\mapsto \Lambda(m,T)$, for $T$ fixed, and also the critical set where $\Lambda(m,T)=0$ are shown Figure \ref{fig6}(c,d,b).

 \begin{figure}[ht]
\begin{center}
\includegraphics[width=10cm,
viewport=150 350 430 690]{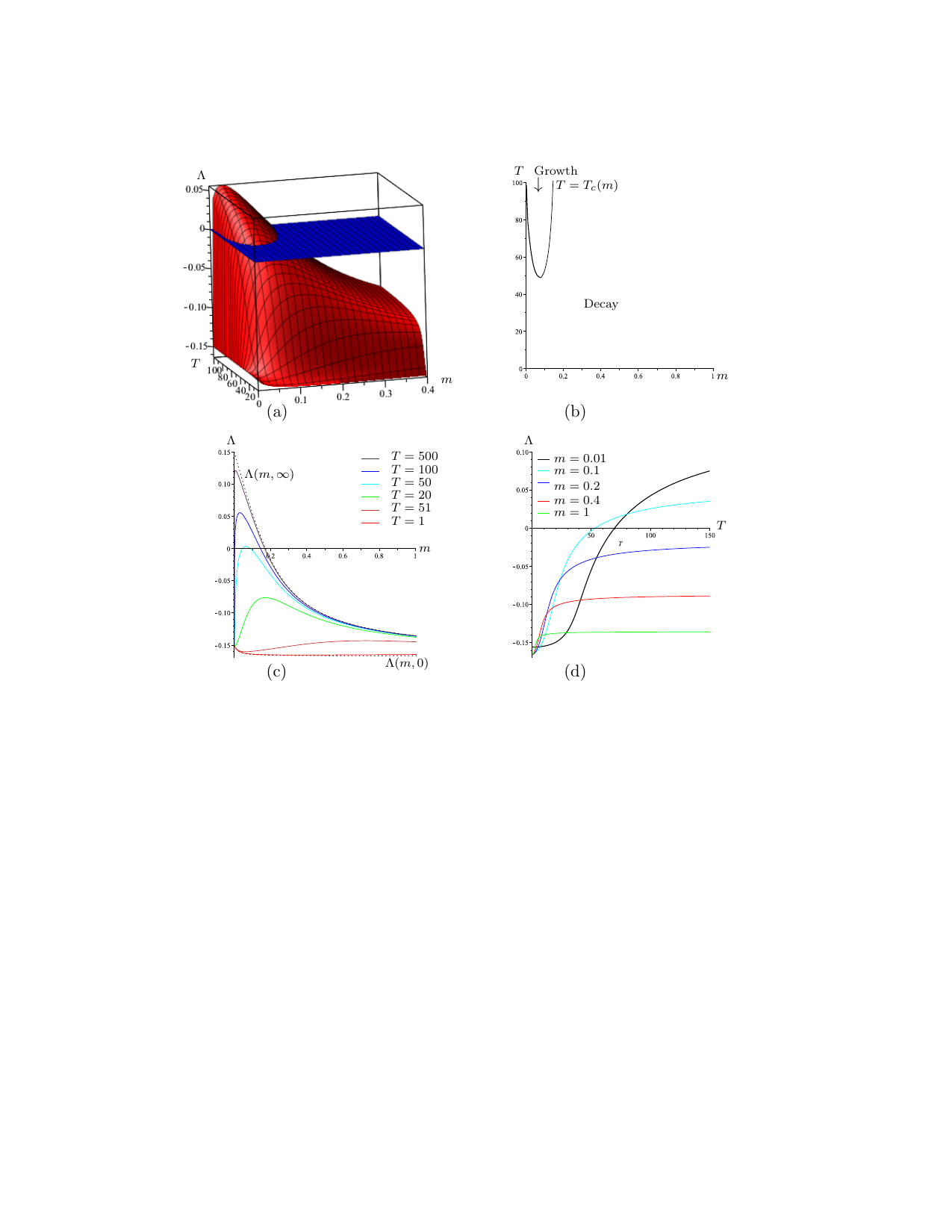}
\caption{(a) The graph of $(m,T)\mapsto \Lambda(m,T)$.  (b) The set $\Lambda(m,T)=0$. (c) Graphs of $m\mapsto \Lambda(m,T)$ with the indicated values of $T$. (d) Graphs of $T\mapsto \Lambda(m,T)$ with the indicated values of $m$. We have $\Lambda(m,\infty)=0$ for $m=m^*=0.172$.}	\label{fig6}	
\end{center}
\end{figure}

Using Remark 3, we 
have $p_1=p_2=p_3=1/3$. 
Using the theoretical formulas in Theorem 4, we have 
$$\begin{array}{l}
\Lambda(0,0)=\Lambda(0,T)=-0.15,\quad
\Lambda(0,\infty)=0.15,\\ 
\Lambda(\infty,0)=\Lambda(\infty,T)=
\Lambda(\infty,\infty)=-1/6.
\end{array}
$$
We can make the following comments on Figure \ref{fig6}: The strictly decreasing functions $m\mapsto \Lambda(m,0)$ and 
$m\mapsto \Lambda(m,\infty)$, are depicted in dotted line on panel (c) of the figure. 
Panel (d) of the figure shows that 
for all $m>0$, the functions $T\mapsto\Lambda(m,T)$ are strictly increasing, supporting Conjecture 1. Hence, there exists a critical curve $T=T_c(m)$ defined for $0<m<m^*$ such that $T_c(0)=T_c(m^*)=\infty$ and DIG occurs if and only if $T>T_c(m)$, as depicted in panel (b) of the figure. Panel (c) of the Figure shows the graphs of functions $m\mapsto\Lambda(m,T)$ and illustrates their convergence 
toward $\Lambda(m,0)$ and $\Lambda(m,\infty)$ as $T$ tends to 0 and $\infty$, respectively.
Notice that for $0<T<\infty$, the functions $m\mapsto\Lambda(m,T)$ are not monotonic.

\begin{figure}[ht]
\begin{center}
\includegraphics[width=10cm,
viewport=160 350 440 680]{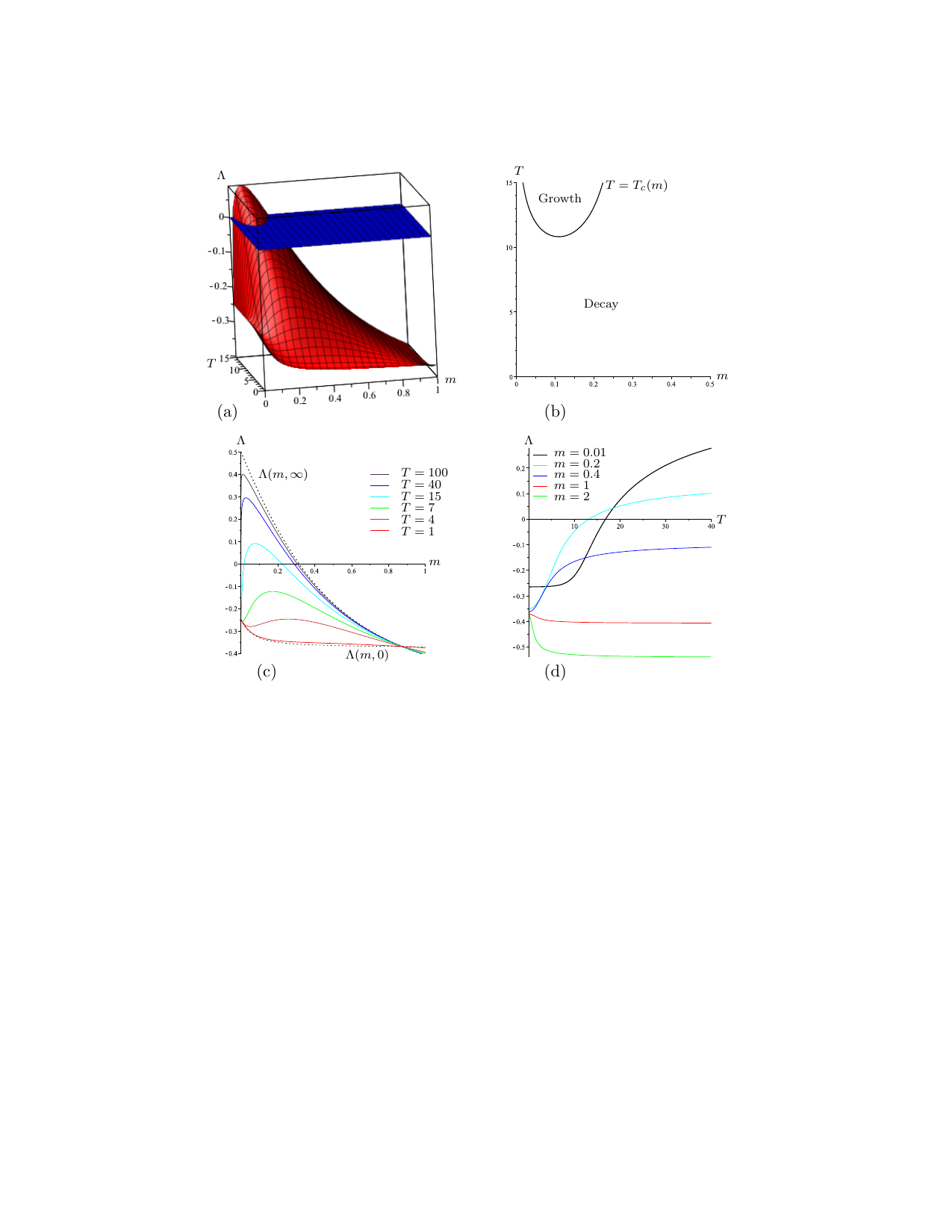}
\caption{(a) The graph of $(m,T)\mapsto \Lambda(m,T)$.  (b) The set $\Lambda(m,T)=0$. (c) Graphs of $m\mapsto \Lambda(m,T)$ with the indicated values of $T$. (d) Graphs of $T\mapsto \Lambda(m,T)$ with the indicated values of $m$. Here we used the two patch model corresponding to the matrices \eqref{AB2}.}	\label{fig3}	
\end{center}
\end{figure}

\section{Time-dependent migration} 
\subsection{The function $T\mapsto \Lambda(m,T)$ is not always increasing}

We show in Figure \ref{fig3}(a) the plot of $\Lambda(m,T)=\frac{1}{T}\ln(\mu(m,T))$ where $\mu(m,T)$ is the Perron root of the monodromy matrix $\Phi(m,T)=e^{\frac{T}{2}B}e^{\frac{T}{2}A}$,
in the case where the matrices 
$A$ and $B$ 
are given by
\begin{equation}\label{AB2}
A=\left[
\begin{array}{cc}
1/2-2m&m\\
2m&-3/2-m
\end{array}
\right],
\quad
B=\left[
\begin{array}{cc}
-1-m&2m\\
m&1/2-2m
\end{array}
\right].
\end{equation}
The migration is time dependent.
We have
$\chi=1/2$, $\overline{r}_1=-1/4$ and $\overline{r}_2=-1/2$.
Therefore, the patches are sinks and DIG can occur.

 Using Remark 2, we 
have
$q_1=q_2=1/2$ and 
 $$
 p_{1}(\tau)=\left\{\begin{array}{l}
1/3\mbox{ if }0\leq \tau< 1/2\\
2/3\mbox{ if }1/2\leq \tau< 1\\
\end{array}
\right.,
\quad
p_2(\tau)=\left\{\begin{array}{l}
2/3\mbox{ if }0\leq \tau<1/2\\
1/3\mbox{ if }1/2\leq \tau< 1\\
\end{array}
\right..
$$
Using the theoretical formulas in Theorem 4 and Proposition 12, we obtain the expressions shown in Table \ref{Ex2}.
\begin{table}
\caption{Limits of $\Lambda(m,T)$ for the parameters values used in Figure \ref{fig3}}\label{Ex2}
\begin{center}
\begin{tabular}{l}
\hline
$\Lambda(0,\infty)=1/2$,\quad
$\Lambda(0,0)=\Lambda(0,T)=-1/4$\\ 
$\Lambda(\infty,0)=-3/8$,\quad 
$\Lambda(\infty,T)=
\Lambda(\infty,\infty)=-2/3$
\\
\hline
$\Lambda(m,0)=-\frac{3}{8}-\frac{3}{2}m+\frac{1}{8}\sqrt{1+144m^2}$\\[1mm]
$\Lambda(m,\infty)=-\frac{3}{8}-\frac{3}{2}m+\frac{1}{4}\sqrt{4-4m+9m^2}+\frac{1}{8}\sqrt{9-12m+36m^2}$\\[1mm]
$\Lambda(m,\infty)=0$ for $m=m^*\approx 0.315$.\\
\hline
\end{tabular}
\end{center}
\end{table}

Since $\Lambda(\infty,T)<\Lambda(\infty,0)$, 
for $m$ large enough, the condition 
$\Lambda(m,T)>\Lambda(m,0)$, established in Proposition 5 for time-independent migration is not satisfied. Therefore,  the function $T\mapsto \Lambda(m,T)$ is not increasing for $m$ large enough and Conjecture 1 is not true for time dependent migration. 

We can make the following comments on Figure \ref{fig3}: 
The strictly decreasing functions $m\mapsto \Lambda(m,0)$ and 
$m\mapsto \Lambda(m,\infty)$, are depicted in dotted line on panel (c) of the figure. 
Panel (d) of the figure shows that  there exists a threshold value $m_0<m^*$ such that the function  $T\mapsto \Lambda(m,T)$ is increasing for $0<m<m_0$, constant for $m=m_0$ and decreasing for $m>m_0$. Hence, there exists a critical curve $T=T_c(m)$ defined for $0<m<m^*$ such that $T_c(0)=T_c(m^*)=\infty$ and DIG occurs if and only if $T>T_c(m)$, as depicted in panel (b) of the figure. Panel (c) of the Figure shows the graphs of functions $m\mapsto\Lambda(m,T)$ and illustrates their convergence 
toward $\Lambda(m,0)$ and $\Lambda(m,\infty)$ as $T$ tends to 0 and $\infty$, respectively.
Notice that for $0<T<\infty$, the functions $m\mapsto\Lambda(m,T)$ are not monotonic.

\begin{figure}[ht]
\begin{center}
\includegraphics[width=10cm,
viewport=160 350 440 680]{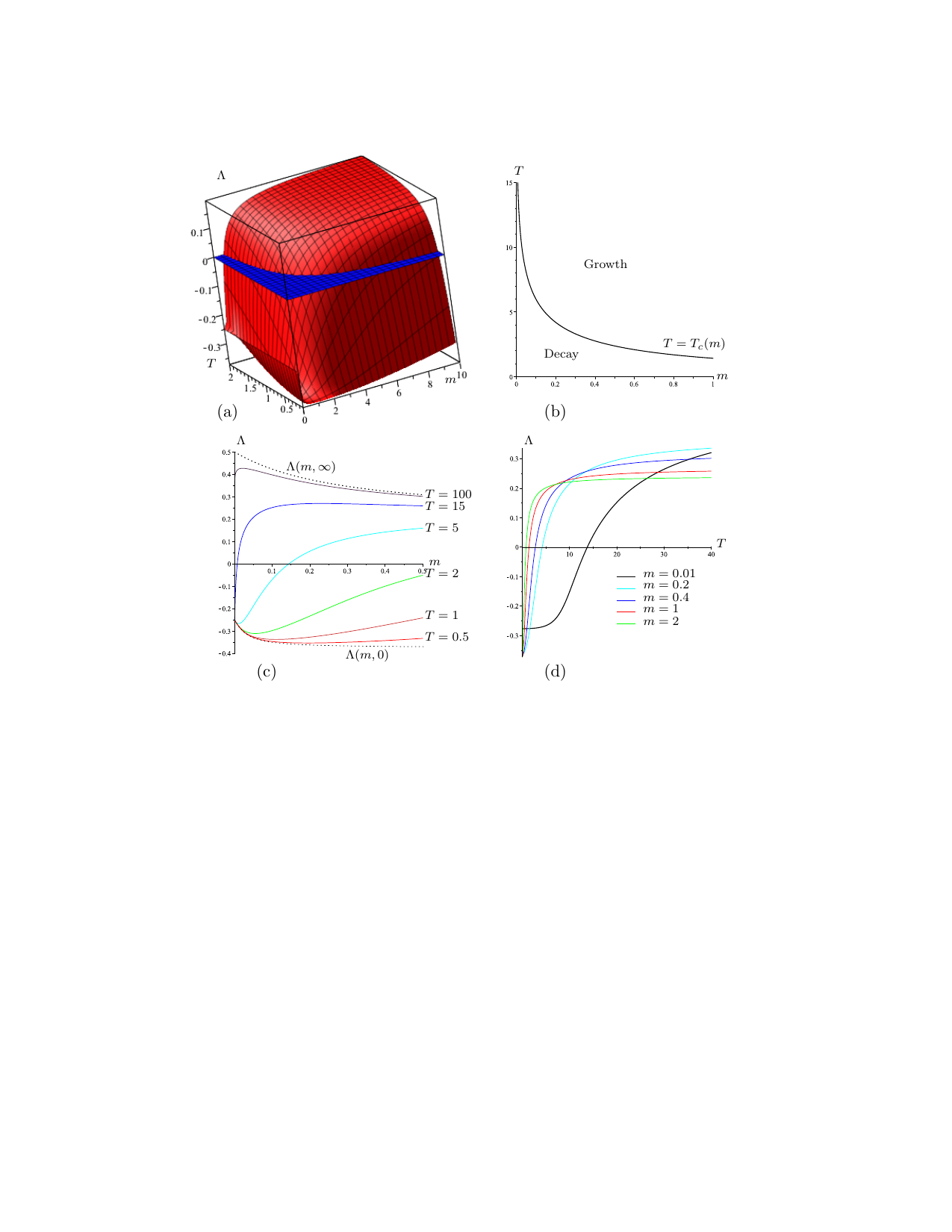}
\caption{(a) The graph of $(m,T)\mapsto \Lambda(m,T)$.  (b) The set $\Lambda(m,T)=0$. (c) Graphs of $m\mapsto \Lambda(m,T)$ with the indicated values of $T$. (d) Graphs of $T\mapsto \Lambda(m,T)$ with the indicated values of $m$. Here we used the two patch model corresponding to the matrices \eqref{AB2}.}	\label{fig4}	
\end{center}
\end{figure}

\subsection{DIG occurs for all $m>0$}

We show in Figure \ref{fig4}(a) the plot of $\Lambda(m,T)=\frac{1}{T}\ln(\mu(m,T))$ where $\mu(m,T)$ is the Perron root of the monodromy matrix $\Phi(m,T)=e^{\frac{T}{2}B}e^{\frac{T}{2}A}$,
in the case where the matrices 
$A$ and $B$ 
are given  by
$$
A=\left[
\begin{array}{cc}
1/2-m&5m\\
m&-3/2-5m
\end{array}
\right],
\quad
B=\left[
\begin{array}{cc}
-1-5m&m\\
5m&1/2-m
\end{array}
\right].
$$
The migration is time dependent.
We have
$\chi=1/2$, $\overline{r}_1=-1/4$ and $\overline{r}_2=-1/2$.
Therefore, the patches are sinks and DIG can occur.

 Using Remark 2, we 
have
$q_1=q_2=1/2$ and 
 $$
 p_{1}(\tau)=\left\{\begin{array}{l}
5/6\mbox{ if }0\leq \tau< 1/2\\
1/6\mbox{ if }1/2\leq \tau< 1\\
\end{array}
\right.,
\quad
p_2(\tau)=\left\{\begin{array}{l}
1/6\mbox{ if }0\leq \tau<1/2\\
5/6\mbox{ if }1/2\leq \tau< 1\\
\end{array}
\right..
$$
Using the theoretical formulas in Theorem 4 and Proposition 12, we obtain the expressions shown in Table \ref{Ex2m_star_infini}. 
\begin{table}
\caption{Limits of $\Lambda(m,T)$ for the parameters values used in Figure \ref{fig4}}\label{Ex2m_star_infini}
\begin{center}
\begin{tabular}{l}
\hline
$\Lambda(0,\infty)=1/2$,\quad
$\Lambda(0,0)=\Lambda(0,T)=-1/4$\\ 
$\Lambda(\infty,0)=-3/8$,\quad 
$\Lambda(\infty,T)=
\Lambda(\infty,\infty)=5/24$
\\
\hline
$\Lambda(m,0)=-\frac{3}{8}-{3}m+\frac{1}{8}\sqrt{1+576m^2}$\\[1mm]
$\Lambda(m,\infty)=-\frac{3}{8}-{3}m+\frac{1}{2}\sqrt{1+4m+9m^2}+\frac{1}{8}\sqrt{9+48m+144m^2}$\\[1mm]
$\Lambda(m,\infty)>0$ for all $m\geq 0$.\\
\hline
\end{tabular}
\end{center}
\end{table}
Since for any $T>0$, $\Lambda(\infty,T)>0$,  
for fixed $T$ and $m$ large enough, the condition 
$\Lambda(m,T)>0$ is satisfied, so that DIG occurs for all $m>0$.
 
We can make the following comments on Figure \ref{fig4}: The strictly decreasing functions $m\mapsto \Lambda(m,0)$ and 
$m\mapsto \Lambda(m,\infty)$, are depicted in dotted line on panel (c) of the figure. 
Panel (d) of the figure shows that 
for all $m>0$, the functions $T\mapsto\Lambda(m,T)$ are strictly increasing. Hence, there exists a critical curve $T=T_c(m)$ defined for $m>0$ such that $T_c(0)=\infty$ and DIG occurs if and only if $T>T_c(m)$, as depicted in panel (b) of the figure. Panel (c) of the Figure shows the graphs of functions $m\mapsto\Lambda(m,T)$ and illustrates their convergence 
toward $\Lambda(m,0)$ and $\Lambda(m,\infty)$ as $T$ tends to 0 and $\infty$, respectively.
Notice that for $0<T<\infty$, the functions $m\mapsto\Lambda(m,T)$ are not monotonic. Therefore, in contrast with Figures \ref{fig6} and \ref{fig3}, the critical curve $T=T_c(m)$ is defined for all $m>0$ and DIG occurs if and only if $T>T_c(m)$.

\begin{figure}[ht]
\begin{center}
\includegraphics[width=10cm,
viewport=160 360 440 680]{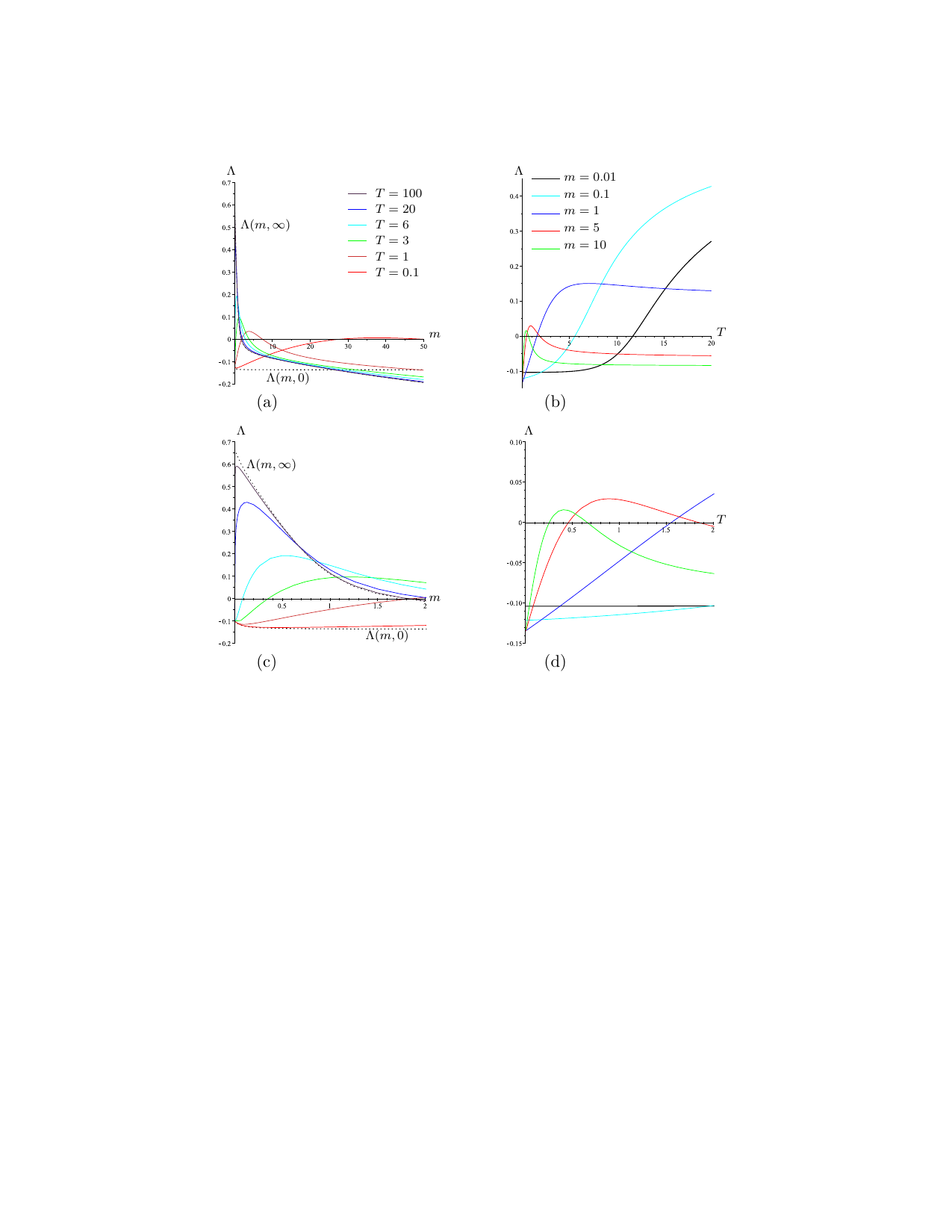}
\caption{(a) Graphs of $m\mapsto \Lambda(m,T)$, with the indicated values of $T$. (b) Graphs of $T\mapsto \Lambda(m,T)$ with the indicated values of $m$. (c) and (d) are zooms of (a) and (b) respectively. 
The parameter values correspond to \eqref{ABCmatrix}.}	\label{fig8}	
\end{center}
\end{figure}

\subsection{DIG can also occur for $m>m^*$}
\label{DIGm>m*}
\subsubsection{The two patch case}

We give more information on  $\Lambda(m,T)=\frac{1}{T}\ln(\mu(m,T))$ where $\mu(m,T)$ is the Perron root of the monodromy matrix 
$
\Phi(T)=e^{\frac{T}{3}C}e^{\frac{T}{3}B}e^{\frac{T}{3}A},
$
where the matrices $A$, $B$ and $C$ are defined by
\begin{equation}
\label{ABCmatrixS}
\begin{array}{c}
A=\left[
\begin{array}{rr}
-m&\frac{m}{10}\\
m&-\frac{1}{10}-\frac{m}{10}
\end{array}
\right],
\quad
B=\left[
\begin{array}{rr}
-\frac{4}{5}-\frac{m}{5}&{2m}\\[1mm]
\frac{m}{5}&\frac{3}{2}-2m
\end{array}
\right],\\[4mm]
C=\left[
\begin{array}{rr}
\frac{1}{2}-\frac{m}{100}&\frac{m}{100}\\
\frac{m}{100}&-2-\frac{m}{100}
\end{array}
\right].
\end{array}
\end{equation}
Using Remark 2, we 
have
 $q_1=\frac{211}{332}$, $q_2=\frac{121}{332}$ and 
 $$
 p_{1}(\tau)=\left\{\begin{array}{r}
\frac{1}{11}\mbox{ if }0\leq \tau< \frac{1}{3}\\[1mm]
\frac{10}{11}\mbox{ if }\frac{1}{3}\leq \tau< \frac{2}{3}\\[1mm]
\frac{1}{2}\mbox{ if }\frac{2}{3}\leq \tau< 1
\end{array}
\right.,
\quad
p_2(\tau)=\left\{\begin{array}{r}
\frac{10}{11}\mbox{ if }0\leq \tau< \frac{1}{3}\\[1mm]
\frac{1}{11}\mbox{ if }\frac{1}{3}\leq \tau< \frac{2}{3}\\[1mm]
\frac{1}{2}\mbox{ if }\frac{2}{3}\leq \tau< 1
\end{array}
\right..
$$
Using the theoretical formulas in Theorem 4 and Proposition 12, we obtain the expressions shown in Table \ref{Ex3}.
\begin{table}
\caption{Limits of $\Lambda(m,T)$ for the parameters values in \eqref{ABCmatrixS}}\label{Ex3}
\begin{center}
\begin{tabular}{l}
\hline
$\Lambda(0,\infty)=2/3$,\quad
$\Lambda(0,0)=\Lambda(0,T)=-1/4$\\ 
$\Lambda(\infty,0)=-453/3320$,\quad 
$\Lambda(\infty,T)=
\Lambda(\infty,\infty)=-21/44$
\\
\hline
$\Lambda(m,0)=-\frac{3}{20}-\frac{83}{150}m+\frac{1}{300}\sqrt{225+1350m+27556m^2}$\\[1mm]
$\Lambda(m,\infty)=-\frac{3}{20}-\frac{83}{150}m+\frac{1}{60}\sqrt{1-18m+121m^2}$\\[1mm]
\qquad\qquad\qquad
$+\frac{1}{60}\sqrt{529-828m+484m^2}+\frac{1}{300}\sqrt{15625+m^2}$\\[1mm]
$\Lambda(m,\infty)=0$ for $m=m^*\approx 1.764$.\\\hline
\end{tabular}
\end{center}
\end{table}

Since 
$\Lambda(\infty,T)<\Lambda(\infty,0)$, 
for $m$ large enough, the condition 
$\Lambda(m,T)>\Lambda(m,0)$ cannot be satisfied. 
The graph of the function  
$\Lambda(m,T)=\frac{1}{T}\ln(\mu(m,T))$, is shown in Figure 4. The graphs of the  functions $T\mapsto \Lambda(m,T)$, for $m$ fixed and  also the graphs of the functions $m\mapsto \Lambda(m,T)$, for $T$ fixed are depicted in Figure \ref{fig8}.
We can make the following comments on this figure : The strictly decreasing functions $m\mapsto \Lambda(m,0)$ and 
$m\mapsto \Lambda(m,\infty)$, are depicted in dotted line on panel (a,c) of the figure. 
Panels (b,d) of the figure show that 
for $m>0$, the functions $T\mapsto\Lambda(m,T)$ can be increasing and then decreasing, so that Conjecture 1 is not true in the time(dependent migration case.  Panels (a,c) of the Figure show the graphs of functions $m\mapsto\Lambda(m,T)$ and illustrates their convergence 
toward $\Lambda(m,0)$ and $\Lambda(m,\infty)$ as $T$ tends to 0 and $\infty$, respectively.
Notice that for $0<T<\infty$, the functions $m\mapsto\Lambda(m,T)$ are not monotonic.
Also note that the functions $m\mapsto \Lambda(m,T)$ can take values greater than $\Lambda(m,\infty)$, see Figure \ref{fig8}(a,c).

\subsubsection{The three patch case}

We show in Figure \ref{fig10} the plot of $\Lambda(m,T)=\frac{1}{T}\ln(\mu(m,T))$ where $\mu(m,T)$ is the Perron root of the monodromy matrix $\Phi(m,T)=e^{\frac{T}{2}B}e^{\frac{T}{2}A}$,
in the case where the matrices 
$A$ and $B$ 
are given  by
\begin{equation}\label{ABESL}
\begin{array}{c}
A=\left[
\begin{array}{ccc}
9-10 m&0&0.1 m\\
10 m&-1-0.1 m& 0\\
0&0.1 m&-10-0.1 m
\end{array}
\right],
\\[5mm]
B=\left[
\begin{array}{ccc}
-10-0.1 m&0&10 m\\
0.1 m&-10 m&0\\
0&10 m&9-10 m
\end{array}
\right],
\end{array}
\end{equation}

\begin{figure}[ht]
\begin{center}
\includegraphics[width=10cm,
viewport=160 335 440 670]{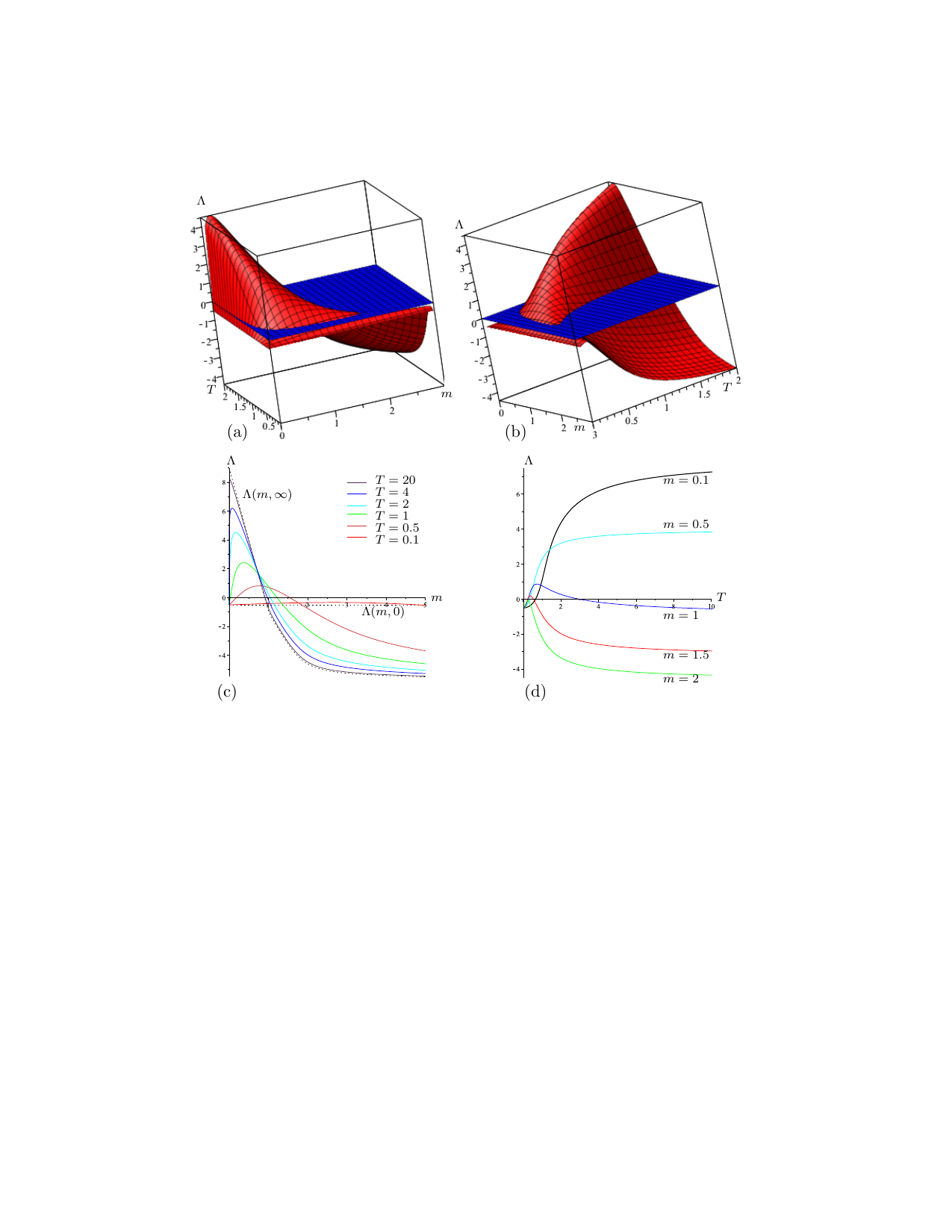}
\caption{(a,b) The graph of $(m,T)\mapsto \Lambda(m,T)$ corresponding to the matrices \eqref{ABESL}, seen from left and right.  
(c) Graphs of $m\mapsto \Lambda(m,T)$ with the indicated values of $T$. (d) Graphs of $T\mapsto \Lambda(m,T)$ with the indicated values of $m$. We have $\Lambda(m,\infty)=0$ for $m=m^*=0.172$.}	\label{fig10}	
\end{center}
\end{figure}

Using Remark 2, we 
have 
$q_1=q_2=q_3=1/3$ and
 $$
 p_{1}(\tau)=\left\{\begin{array}{r}
\frac{1}{201}\mbox{ if }0\leq \tau< \frac{1}{2}\\[1mm]
\frac{50}{51}\mbox{ if }\frac{1}{2}\leq \tau< 1
\end{array}
\right.,
\quad
p_2(\tau)=p_3(\tau)=\left\{\begin{array}{l}
\frac{100}{201}\mbox{ if }0\leq \tau<\frac{1}{2}\\
[1mm]
\frac{1}{102}\mbox{ if }\frac{1}{2}\leq \tau< 1
\end{array}
\right..
$$
Using the theoretical formulas in Theorem 4, we have
$$\begin{array}{l}
\Lambda(0,\infty)=9,\quad
\Lambda(0,0)=\Lambda(0,T)=
\Lambda(\infty,0)=\Lambda(m,0)=-1/2,\\ 
\Lambda(\infty,T)=
\Lambda(\infty,\infty)=-34497/4556\approx -7.572.
\end{array}
$$

We have $\Lambda(\infty,T)<\Lambda(\infty,0)$ and hence, for $m$ large enough we should have $\Lambda(m,T)<\Lambda(m,0)$, so that  $\Lambda(m,T)$ is not increasing with respect to $T$. 

\section{Slow regime}

\begin{figure}[ht]
\begin{center}
\includegraphics[width=10cm,
viewport=160 550 440 680]{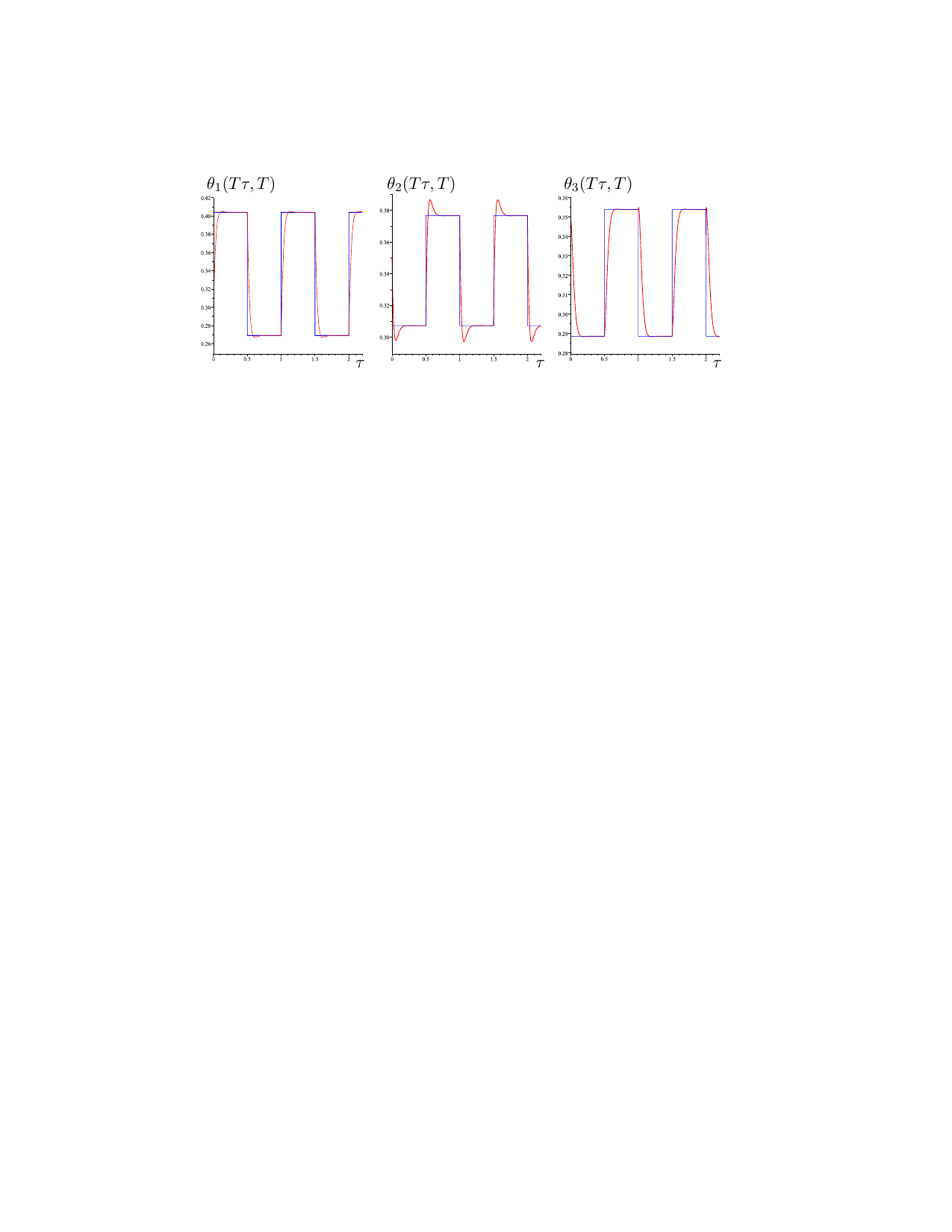}
\caption{The figure corresponds to the example discussed in Section \ref{Example3sites}, with $m=1$ and $T=20$. The solution $\theta(T\tau,T)$ with initial condition $\theta_1(0)=0.3$, $\theta_2(0)=0.35$, $\theta_3(0)=0.35$ is colored in red. The Perron-Frobenius vector $v(\tau)$ of the matrix $A(\tau)$ is colored in blue.}		\label{fig12}	
\end{center}
\end{figure}

The behavior of $\theta^*(T\tau,T)$ as $T\to\infty$ is illustrated in Figure \ref{fig12},  showing the approximation of $\theta^*(T\tau,T)$ by the Perron-Frobenius vector $v(\tau)$ when $T$ is large enough. Note that the approximation is uniform except on the small intervals $[\tau_k,\tau_{k}+\nu]$, where $\tau_k$ is a discontinuity of $v(\tau$. In these thin layers, the solution jumps quickly from the left limit of $v(\tau)$ at $\tau_k$ to its right limit. 
\end{document}